\documentclass[12pt]{amsart}
\usepackage{amsmath}
\usepackage{amsthm}
\usepackage{bbm}
\usepackage{amsfonts,amssymb,bm}
\usepackage{fancyhdr}
\usepackage{mathrsfs}
\usepackage{appendix}
\usepackage{graphicx}
\usepackage{comment}
\usepackage[all]{xy}
\usepackage{color}
\usepackage{enumerate}
\usepackage{tikz-cd}
\usepackage{mathtools}

\usepackage{float} 
\usepackage{amsthm}

\newtheorem{thm}{Theorem}[section]

\newtheorem{lem}[thm]{Lemma}
\newtheorem{cor}[thm]{Corollary}
\newtheorem{prop}[thm]{Proposition}
\newtheorem*{claim}{Claim}

\theoremstyle{definition}
\newtheorem{defi}[thm]{Definition}
\newtheorem{rmk}[thm]{Remark}
\newtheorem{construction}[thm]{Construction}

\usepackage{hyperref}
\usepackage{times} 

\def\Aut{{\rm Aut}}

\def\Supp{{\rm Supp}}

\def\Gr{{\rm Gr}}

\def\wt{{\rm wt}}

\def\Proj{{\rm Proj}}
\def\Spec{{\rm Spec}}

\def\mult{{\rm mult}}

\def\Image{{\rm Image}}

\def\div{{\rm div}}

\def\codim{{\rm codim}}

\def\B0{\mathbf{0}}

\newcommand{\git}{\mathord{/\mkern-6mu/}} 

\newcommand{\IA}{{\mathbb A}}

\newcommand{\IG}{{\mathbb G}}

\newcommand{\Ik}{{\mathbbm k}}

\newcommand{\IN}{{\mathbb N}}
 
\newcommand{\IP}{{\mathbb P}} 
\newcommand{\IQ}{{\mathbb Q}}

\newcommand{\IV}{{\mathbb V}}

\newcommand{\IZ}{{\mathbb Z}}

\newcommand{\CB}{{\mathcal B}}
\newcommand{\CC}{{\mathcal C}}
\newcommand{\CD}{{\mathcal D}}

\newcommand{\CH}{{\mathcal H}}

\newcommand{\CL}{{\mathcal L}}
\newcommand{\CM}{{\mathcal M}} 

\newcommand{\CO}{{\mathcal O}} 
\newcommand{\CP}{{\mathcal P}}

\newcommand{\CS}{{\mathcal S}}

\newcommand{\CV}{{\mathcal V}}

\newcommand{\CW}{{\mathcal W}}
\newcommand{\CX}{{\mathcal X}}
\newcommand{\CY}{{\mathcal Y}}

\newcommand{\cy}{{\rm{CY}}}
\newcommand{\ksba}{{\rm{KSBA}}}

\newcommand{\fm}{\mathfrak{m}}

\newcommand{\fh}{\mathfrak{h}}
\newcommand{\fg}{\mathfrak{g}}

\newcommand{\la}{\langle}
\newcommand{\ra}{\rangle}

\newcommand{\lam}{\lambda}
\newcommand{\D}{\Delta}

\newcommand{\vep}{\varepsilon}
\newcommand{\Hilb}{\mathrm{Hilb}}

\newcommand{\leftsquigarrow}{%
  \mathrel{\reflectbox{$\rightsquigarrow$}}%
}

\def\oF{{\overline{F}}}
\def\oR{{\overline{R}}}

\title{log Calabi-Yau wall crossing for moduli of cubic surfaces}
\author{Hanfeng Chu}
\address{Shanghai Center for Mathematical Sciences, Fudan University, Shanghai, 200438, China}
\curraddr{}
\email{hfchu23@m.fudan.edu.cn}
\thanks{}
\keywords{}
\date{}
\dedicatory{}

\begin{document}
\begin{abstract}
    We study the wall crossing for the moduli space of unmarked cubic surfaces at the log Calabi-Yau threshold. We prove the existence of a good moduli space for the corresponding moduli stack of boundary polarized Calabi-Yau pairs with Gorenstein underlying surfaces and describe the wall crossing morphisms explicitly. More precisely, the morphism from the K-moduli space to the log Calabi-Yau moduli space is an isomorphism, whereas the morphism from the KSBA moduli space contracts a divisor isomorphic to $\IP^3$ to a point and is an isomorphism away from this divisor.
\end{abstract}
\maketitle
\section{Introduction}
    The development of several approaches to moduli theory has led to substantial progress in the study of compactifications of the moduli space of smooth cubic surfaces. The twenty-seven lines on a smooth cubic surface provide a canonical choice of the boundary divisor, allowing one to  construct compactifications using K-stability and KSBA theory. We refer to \cite{Xubook} and \cite{Kollar23} for general theories.
 
     More precisely, let $X$ be a smooth cubic surface and $D$ be the sum of its twenty-seven lines. For a rational number $c\in [0,1]$, the pair $(X,cD)$ is log Fano if $c<1/9$ and of log general type if $c>1/9$. Thus K-stability and KSBA theory apply on the two sides of the threshold $c=1/9$, respectively.

    The moduli theory on either side of the log Calabi-Yau threshold  is already understood. On the log Fano side, the K-moduli spaces of unmarked cubic surfaces have no walls for $0\leq c<1/9$ and are all isomorphic to the GIT moduli space of cubic surfaces \cite[Section 3.1]{Zhao24}. On the KSBA side, previous work has  mainly focused on marked cubic surfaces. The moduli space of smooth marked cubic surfaces admits the classical cross-ratio compactification constructed by Naruki \cite{Naruki82}. Here, the marked moduli problem keeps track of a labeling of the 27 lines, whereas the unmarked problem remembers only the sum of lines. The latter is obtained from the former by taking the quotient by the natural $W(E_6)$-action permuting the markings. At weight $(1+\vep)/9$ for $0<\vep\ll1$, the normalization of the marked KSBA compactification was identified with Naruki's cross-ratio compactification \cite{GKS21}. This normalization was subsequently shown to be unnecessary: the marked KSBA compactification itself is smooth and fine \cite{FSW25}. The wall crossings for larger weights in the marked setting were determined in \cite{Schock24}.
    
    A natural problem is therefore to understand the wall crossing at the log Calabi-Yau threshold. The theory of boundary polarized CY pairs developed in \cite{ABB23} and \cite{BL24} provides the moduli space at $c=1/9$. In \cite[Section 7.1]{BL24}, Blum-Liu constructed the CY-moduli stack of unmarked cubic surfaces $\CY^{\cy}$ and defined the K-moduli substack $\CY^{\mathrm{K}}$ (resp. KSBA substack $\CY^{\ksba}$) as the open substack of $\CY^{\cy}$ parametrizing boundary polarized CY pairs $(X,D)$ such that $(X,(1-\vep)D)$ is K-semistable (resp. $(X,(1+\vep)D)$ is KSBA-stable) for $0<\vep\ll1$. Then they constructed a wall crossing diagram of birational morphisms between the corresponding (asymptotically) good moduli spaces \cite[Theorem 7.7]{BL24}.

    In this paper, we consider the index one substack $\CY_1^{\cy}\subset\CY^{\cy}$ parametrizing boundary polarized CY pairs $(X,D)$ such that $K_X$ is Cartier. See Construction \ref{Construction. moduli Ycy} for details. Our first result establishes the existence of a good moduli space for $\CY_1^{\cy}$.
    \begin{thm}[Theorem {\ref{Theorem. Existence of good moduli}}]
    \label{Theorem. 1.1}
        The index one CY-moduli stack of unmarked cubic surfaces $\CY_1^{\cy}$ admits a good moduli space morphism
        \[
            \phi_{\cy}:\CY_1^{\cy}\longrightarrow Y_1^{\cy},
        \]
        where $Y_1^{\cy}$ is a separated algebraic space of finite type. 
    \end{thm}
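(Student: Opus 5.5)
The plan is to apply the Alper--Halpern-Leistner--Heinloth existence criterion: an algebraic stack of finite type over a field of characteristic zero, with affine diagonal, admits a separated good moduli space if and only if it is $\Theta$-reductive, S-complete, and has reductive stabilizers at closed points (equivalently, unpunctured inertia). Blum--Liu show in \cite{BL24} that the full stack $\CY^{\cy}$ is $\Theta$-reductive and S-complete, but it is not of finite type, which is why \cite[Theorem 7.7]{BL24} only gives an asymptotically good moduli space. So the task reduces to the following: show that the index one substack $\CY_1^{\cy}$ is of finite type with affine diagonal, and that it inherits $\Theta$-reductivity and S-completeness.

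\textbf{Step 1 (boundedness and finite type).} If $(X,D)$ is in $\CY_1^{\cy}$, then $X$ is a Gorenstein degeneration of cubic surfaces with $-K_X$ ample Cartier, $K_X^2=3$, and $D\sim -9K_X$. I will use the classification of Gorenstein log canonical (in fact slc, possibly non-normal) del Pezzo surfaces of degree $3$. Anticanonical embedding realizes each such $X$ as a cubic hypersurface in $\IP^3$, so these $X$ form a bounded family. The divisors $D\in|-9K_X|$ are then parametrized by a projective bundle over the Hilbert scheme of cubics. The conditions that $(X,D)$ is slc and that $K_X+D\sim_{\IQ}0$ with the index one condition are locally closed. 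Hence $\CY_1^{\cy}$ is a quotient of a locally closed subscheme of this parameter space by $PGL_4$, which gives finite type and affine diagonal.

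\textbf{Step 2 (openness and closedness of $\CY_1^{\cy}$).} I need $\CY_1^{\cy}$ to be an open substack of $\CY^{\cy}$, or at least to satisfy the valuative properties defining $\Theta$-reductivity and S-completeness. Openness should follow because Cartierness of $K_{X_s}$ is an open condition in flat families whose fibers satisfy Kollár's condition. The critical point is stability of $\CY_1^{\cy}$ under the two limiting constructions. The extensions produced by $\Theta$-reductivity and S-completeness for $\CY^{\cy}$ come from filtrations of the section ring of $-K$. The resulting special fibers are degenerations of $X$ that are themselves boundary polarized CY, and I must show they are again Gorenstein. Since $-K_X$ is Cartier, the total section ring $\bigoplus H^0(-mK_X)$ is generated in degree $1$ for a cubic. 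The Rees/degeneration construction then produces a test configuration whose central fiber is $\Proj$ of an associated graded ring. I expect this central fiber is again a cubic hypersurface in $\IP^3$, hence Gorenstein, provided the filtration is finitely generated in degree $1$.

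\textbf{Main obstacle.} This closedness step is where I expect the difficulty. An S-completeness filling could a priori have a central fiber with non-Cartier $K$, for instance a degeneration to $\IP(1,1,4)$-type or other index $>1$ surfaces allowed in $\CY^{\cy}$. To address this, I will first try to show the following: for a limit $(X_0,D_0)$ in $\CY^{\cy}$ of index-one pairs along the relevant $\Theta$ or $\overline{ST}$ families, the $\IG_m$-equivariant family is generated by its degree-one sections, so that $X_0\subset\IP^3$ is a cubic. The degree-one piece of the graded ring has dimension $4$ by semicontinuity and Riemann--Roch, and is preserved under taking associated graded. If generation fails, the fallback is to use the classification of degenerations of cubic surfaces with slc CY boundaries. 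I would check case by case that the complement $\CY^{\cy}\setminus\CY_1^{\cy}$ cannot appear as the special fiber of a $\Theta$- or S-completeness diagram whose generic part lies in $\CY_1^{\cy}$.

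Once these steps are in place, AHLH yields $\phi_{\cy}\colon\CY_1^{\cy}\to Y_1^{\cy}$, where $Y_1^{\cy}$ is a separated algebraic space of finite type.
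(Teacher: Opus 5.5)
You correctly identify the crux, but there is a genuine gap: you never show that the unique filling of an index-one family over $\overline{\mathrm{ST}}_R\setminus\{0\}$ or $\Theta_R\setminus\{0\}$ again has $K$ Cartier on the central fiber. Your Step 1 is essentially a reproof of \cite[Proposition 3.3]{BL24}, which the paper simply cites. Openness, and the existence of a filling in $\CY^{\cy}$, are also fine; the latter comes from \cite[Theorem 2.34]{BL24} plus normality of the base, which lets the map factor through the seminormalized closure.

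Your proposed mechanism does not close the gap.
\begin{itemize}
\item Knowing that the degree-one piece has dimension $4$, and that this survives passage to the associated graded, says nothing about the associated graded ring being generated in degree one. For filtrations of a ring generated in degree one, the associated graded generally needs higher-degree generators; this is exactly how weighted, non-Gorenstein central fibers (e.g.\ with $T$-singularities) arise.
\item ``The family is generated by degree-one sections'' is equivalent to the fiber being a cubic in $\IP^3$, i.e.\ to the conclusion itself, so this step is circular.
\item The fallback, a case-by-case exclusion of $\CY^{\cy}\setminus\CY_1^{\cy}$, is not carried out. It would need a classification of all possible non-Gorenstein limits, which you do not have.
\end{itemize}
As written, $\Theta$-reductivity and S-completeness of $\CY_1^{\cy}$ are not established, so AHLH cannot be applied. (In characteristic $0$ no separate stabilizer hypothesis is needed in AHLH, and unpunctured inertia is not ``equivalent'' to it; this is minor.)

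The missing idea is a local index argument on the regular two-dimensional base that avoids classifying the bad locus. Let $(X,\frac19D)\to S$ be the filling, with $S=\Spec R[s,t]/(st-\pi)$ or $\Spec R[t]$, index one over $S\setminus\{0\}$. Suppose $K_{X_0}$ has index $r>1$ at a point $x_0$.
\begin{itemize}
\item If $x_0$ is not an lc center of $X_0$, then \cite[Theorem 3.5]{BL24} gives a curve through $x_0$, not contained in $X_0$, along which the index is constant. This contradicts index one away from $0$.
\item If $x_0$ is an lc center, then $x_0\notin\Supp D_0$, so $9K_{X_0}+D_0\sim0$ gives $r\mid 9$. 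The pair is then non-klt. Every pair in $\CY^{\cy}$ is S-equivalent to either a klt $A_1$-cubic with its $27$ lines or the three-planes pair, and sources are S-equivalence invariants. Hence the pair is of Type III.
\item Type III pairs admit a $2$-complement $(X_0,B_0)$ with $2(K_{X_0}+B_0)\sim0$. This comes from the toric degeneration of \cite[Theorem 5.9]{BL24} together with \cite[Theorem B.1 and Lemma 12.2]{ABB23}. Since $x_0\notin\Supp B_0$, we get $r\mid 2$, and therefore $r=1$.
\end{itemize}
Some argument of this kind is needed to make your Step 2 work.
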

    The substacks $\CY^{\mathrm{K}}$ and $\CY^{\ksba}$ are both contained in $\CY_1^{\cy}$ and admit good moduli spaces $Y^{\mathrm{K}}$ and $Y^{\ksba}$ respectively (Proposition \ref{Proposition. good moduli of K and KSBA}). The wall crossing  is concentrated at the point $p_0=[(X_0,\frac19D_0)]$ represented by 
    \[
        X_0=(x_1x_2x_3=0)\subset\IP^3_{[x_0:x_1:x_2:x_3]},\quad D_0=(x_0^9=0)|_{X_0}.
    \]
     The point $p_0$ is closed in $\CY_1^{\cy}$ and belongs to neither $\CY^{\mathrm{K}}$ nor $\CY^{\ksba}$.
     \begin{thm}[Corollary {\ref{Corollary. describe wall crossing morphisms}}]
     \label{Theorem. 1.2}
         The inclusions of $\CY^{\mathrm{K}}$ and $\CY^{\ksba}$ in $\CY_1^{\cy}$ induce morphisms of good moduli spaces
         \[
         Y^{\mathrm{K}}\xrightarrow{\ \pi_{\mathrm{K}}\ \ }Y_1^{\cy}\xleftarrow{\pi_{\ksba}}Y^{\ksba}
         \]
    with the following properties:
    \begin{enumerate}
        \item The morphism $\pi_{\mathrm{K}}$ is an isomorphism. In particular, $Y_1^{\cy}$ is isomorphic to the GIT moduli space of cubic surfaces.
        \item The morphism $\pi_{\ksba}$ is an isomorphism over $Y_1^{\cy}\backslash\{q_0\}$, where $q_0=\phi_{\cy}(p_0)$, and contracts the divisor
        \[
            E:=(\pi_{\ksba}^{-1}(q_0))_{\mathrm{red}}\cong\IP^3
        \]
        to $q_0$.
    \end{enumerate}
    \end{thm}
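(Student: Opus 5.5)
The plan is to produce the two morphisms from the universal property of good moduli spaces, then analyze each map over the complement of $q_0$ and over $q_0$ separately. By Theorem \ref{Theorem. 1.1} and Proposition \ref{Proposition. good moduli of K and KSBA}, the stacks $\CY_1^{\cy}$, $\CY^{\mathrm{K}}$ and $\CY^{\ksba}$ all have separated good moduli spaces. The open immersions $\CY^{\mathrm{K}},\CY^{\ksba}\hookrightarrow\CY_1^{\cy}$ composed with $\phi_{\cy}$ give maps to an algebraic space, so they factor uniquely through $Y^{\mathrm{K}}$ and $Y^{\ksba}$; this produces $\pi_{\mathrm{K}}$ and $\pi_{\ksba}$. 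Next I would check that both substacks are \emph{saturated} in $\CY_1^{\cy}$ away from $p_0$: closures of points of $\CY_1^{\cy}\setminus\{\text{S-equivalence class of }p_0\}$ stay inside the relevant substack. For this I would use the classification of index-one boundary polarized CY pairs $(X,\frac19 D)$ with $X$ a Gorenstein cubic (Construction \ref{Construction. moduli Ycy}), comparing the polystable objects directly. Then Alper's criterion (an open, saturated substack has its good moduli space as an open subspace) shows that each $\pi$ is an isomorphism over $Y_1^{\cy}\setminus\{q_0\}$, provided $q_0$ is exactly the image of the non-saturated locus.

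For (1), I would show that every point of $\CY_1^{\cy}$ specializing to $p_0$ is already S-equivalent, inside $\CY^{\mathrm{K}}$, to a single K-polystable pair. Concretely, take the $\IG_m^2$-degenerations of $X_0=(x_1x_2x_3=0)$ with boundary $x_0^9$; these correspond to the strictly GIT-semistable cubic $x_0^3+x_1x_2x_3$ side. The torus $x_1x_2x_3=0$ with $D=(x_0^9)$ should be the CY-polystable replacement of the K-polystable point $x_0^3=x_1x_2x_3$. This gives $\pi_{\mathrm{K}}^{-1}(q_0)$ equal to a single point. Since $\pi_{\mathrm{K}}$ is then a proper birational quasi-finite map onto the normal space $Y_1^{\cy}$ (normality via the local structure at $p_0$: the quotient of $T^1$ by the stabilizer $\IG_m^2\rtimes S_3$), Zariski's main theorem makes it an isomorphism. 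Combined with \cite[Section 3.1]{Zhao24}, this identifies $Y_1^{\cy}$ with the GIT quotient.

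For (2), I would compute $\pi_{\ksba}^{-1}(q_0)$ via the Luna étale slice at $p_0$. Its stabilizer is $T=\IG_m^2$ (up to finite symmetries). I would compute the $T$-representation on the deformation space $V$ of $(X_0,D_0)$ in the index one stack. Then I would identify $\CY^{\ksba}$ near $p_0$ as the semistable locus for the character $\chi$ corresponding to $\vep>0$, and $\CY^{\mathrm{K}}$ as the one for $-\chi$ (variation of GIT, following \cite{BL24}). The fiber over the origin is then $V^{ss}(\chi)\cap(\text{nullcone})\git T$, which should be a weighted projective space that is, after accounting for weights, $\IP^3$ modulo the finite group. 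The divisorial nature comes from a dimension count: $\dim Y=4$, so the fiber has dimension $3$. I expect the main obstacle to be this explicit slice computation: exhibiting the weights of $V$ and verifying that the KSBA-side semistable part of the attracting locus has quotient exactly $\IP^3$, including the reducedness statement, rather than a quotient by a nontrivial finite group. I would try to match it with the known KSBA boundary stratum of pairs whose $x_0$-section degenerates, using \cite{GKS21} modulo $W(E_6)$ as a consistency check.
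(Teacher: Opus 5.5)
Your outline matches the paper on the easier parts: the universal property, and the isomorphism over $Y_1^{\cy}\setminus\{q_0\}$, which holds because every closed point other than $p_0$ lies in the open substack $\CY^{\mathrm{K}}\cap\CY^{\ksba}$. The step that carries all the content, however, is assumed rather than proved.

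\textbf{The missing local model.} Two of your arguments presuppose that the \'etale slice at $p_0$ is a smooth linear representation (``the quotient of $T^1$'', ``the deformation space $V$ of $(X_0,D_0)$ in the index one stack''):
\begin{enumerate}
\item your normality claim for $Y_1^{\cy}$, which you need for Zariski's main theorem in (1);
\item your chamber and fiber computation in (2).
\end{enumerate}
Nothing gives you such a slice. $\CY_1^{\cy}$ is a seminormalized closure inside $\CM(\chi,9,\frac19)$, so it is not known to be smooth at $p_0$. Deformation theory of the pair in $\CM$ also computes the wrong space: there $D$ can move freely in $|-9K_{X_0}|$, not only through the closure of the $27$-line locus. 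The slice theorem of \cite{alper2020luna} gives only some affine scheme with a stabilizer action, with no control on its singularities, equations or weights.

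The missing idea, which is the bulk of the paper, is to build the slice explicitly. One takes the $G$-equivariant family $x_1x_2x_3=tP$, with $P=x_0^3+\sum_i(P_ix_0x_i^2+Q_ix_i^3)$, over $U\cong\IA^7$. Its boundary is the pushforward of the closure of the relative Fano scheme of lines. One then proves that the induced map $\mathrm{PGL}_4\times^GU^\circ\to A$, to the $\mathrm{PGL}_4$-atlas $A$ of an open substack of $\CY_1^{\cy}$, is \'etale at the origin. This uses three facts:
\begin{enumerate}
\item the map is unramified, checked by restricting the boundary to the double curves;
\item the map is dominant;
\item $A$ is geometrically unibranch at the image point, by a normal-form lemma over complete DVRs.
\end{enumerate}
Only after this do you know the slice is smooth, with weights $e_1+e_2+e_3$, $-2e_i$, $-3e_i$.

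\textbf{Errors in the specifics.}
\begin{enumerate}
\item The stabilizer of $p_0$ is $(\IG_m)^3\rtimes S_3$, since every diagonal matrix preserves both $x_1x_2x_3=0$ and $x_0=0$. The group $(\IG_m)^2\rtimes S_3$ is the automorphism group of the $3A_2$ cubic $x_1x_2x_3=x_0^3$, which is not the point you slice at. Using it would give the wrong slice dimension and weights.
\item The identification of $\CY^{\mathrm{K}}$ and $\CY^{\ksba}$ with the two VGIT chambers is not a formal consequence of a general principle here. The paper checks it directly: GIT-semistability on $\{t\neq0\}$ after shrinking; an $A_2$ point when $t\neq0$ and some $(P_i,Q_i)=(0,0)$; and \cite[Theorem 1.5]{GKS21} on $\IV(t)$.
\item ``$\dim Y=4$, so the fiber has dimension $3$'' is a non sequitur, since exceptional loci need not be divisorial. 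What gives $E\cong\IP^3$ is the computation $\bigoplus_n\oR_n=\bigoplus_n\mathrm{Sym}^3(\mathrm{Sym}^n\langle P^3,Q^2\rangle)=\Ik[c_0,c_1,c_2,c_3]$ on $\IV(t)$.
\item Part (1) follows from $R^G=(R[t^{-1}])^G$ by \'etale base change, with no need for Zariski's main theorem.
\item Do not try to prove reducedness. The scheme-theoretic fiber of $\pi_{\ksba}$ over $q_0$ is nonreduced, generically of multiplicity $2$, which is why the statement only concerns $(\pi_{\ksba}^{-1}(q_0))_{\mathrm{red}}$.
\end{enumerate}
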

The paper is organized as follows. In Section \ref{Section. pre}, we recall preliminaries on boundary polarized CY pairs, good moduli spaces and local VGIT. In Section \ref{Section. moduli space}, we construct the index one CY-moduli stack of unmarked cubic surfaces and prove the existence of its good moduli space. In Section \ref{Section. sec 4}, we construct an explicit family in the moduli stack and use it to obtain a local quotient presentation at $p_0$. In Section \ref{Section. sec 5}, we establish the local VGIT picture at $p_0$ and use it to describe the wall crossing morphisms explicitly.
\subsection*{Acknowledgements}The author is grateful to Yuchen Liu for suggesting this question and for many helpful conversations. He  thanks his advisor, Zhiyuan Li, for his constant support and many valuable comments on this work. He also thanks Long Pan, Fei Si, Haoyu Wu, and Junyan Zhao for fruitful discussions. The author is supported by NSFC Grant No. 12121001.
\subsection*{AI disclosure}
LLMs were used in the construction of (\ref{Equation. Family on U}), Section \ref{Section. sec 4}, and Section \ref{Section. sec 5}. Actually, the main problem studied in this paper was suggested to the author by Yuchen Liu during a discussion in July 2025. In September 2025, the author proved the existence of a good moduli space (Theorem \ref{Theorem. 1.1}) and subsequently attempted to describe the wall crossing morphisms by establishing a local VGIT picture. The author initially attempted to follow the strategy of \cite{ADL24}, but this approach was unsuccessful.

In December 2025, during Junyan Zhao's visit to SCMS, he suggested constructing an explicit family. Following several fruitful discussions, the author obtained some rough ideas of the explicit construction. After sharing ideas with LLMs, the models proposed the family in (\ref{Equation. Family on U}), which closely aligned with the author's intended construction. The tool was also used to assist in the proof of Proposition \ref{Proposition. Etaleness of Phi}, Lemma \ref{Lemma. morphism of quotient of VGIT}, and Lemma \ref{Lemma. shrinking U A1A2}. In addition, LLMs were used for language editing, assistance with literature searches, and discussion of certain mathematical arguments.

All mathematical statements, constructions, and proofs appearing in this manuscript were independently checked by the author. The author assumes full responsibility for the manuscript.
\section{Preliminaries}
\label{Section. pre}
Throughout, we work over an algebraically closed field $\Ik$ of characteristic 0.
\subsection{Moduli stack of boundary polarized CY pairs}
\begin{defi}
    A projective slc pair $(X,D)$ is called a \emph{Calabi-Yau pair}, or \emph{CY pair}, if $K_X+D\sim_{\IQ}0$. A CY pair $(X,D)$ is called \emph{boundary polarized} if $D$ is an effective ample $\IQ$-Cartier divisor.
\end{defi}
\begin{rmk}
    For such a pair, $-K_X$ is ample and $\IQ$-Cartier. The more general convention in \cite{BL24} allows a boundary divisor $B$ and writes a pair as $(X,B+D)$, with $(X,B)$ an slc log Fano pair in the sense of \cite[Definition 2.1]{ABB23}. We use only the case $B=0$.
\end{rmk}
\begin{defi}
    Let $f:X\to T$ be a flat finite type morphism with fibers of pure dimension $n$. A subscheme $D\subset X$ is a \emph{relative Mumford divisor} if there is an open set $U\subset X$ such that
    \begin{enumerate}
        \item $\codim_{X_t}(X_t\backslash U_t)\geq 2$ for any $t\in T$,
        \item $D|_U$ is a relative Cartier divisor (i.e. $D|_U$ is a Cartier divisor on $U$ and does not contain irreducible components of fibers),
        \item $D$ is the closure of $D|_U$, and
        \item $X_t$ is smooth at the generic points of $D_t$ for any $t\in T$.
    \end{enumerate}
When $T=\Spec(\Ik)$, we simply call $D$ a \emph{Mumford divisor}.
\end{defi}

If $D\subset X$ is a relative Mumford divisor of $f:X\to T$ and $T'\to T$ is a morphism, then its \emph{divisorial pullback} $D_{T'}$ on $X_{T'}:=X\times_TT'$ is defined as the closure of the pullback of $D|_U$ to $U':=U\times_TT'$. Note that $D_t$ in (4) denotes the divisorial pullback and does not always agree with the scheme-theoretic fiber. 
\begin{defi}[{\cite[Definition 2.27]{BL24}}]
\label{Definition. family of bpcy}
    Fix a positive rational number $c$ and a positive integer $N$ with $Nc\in\IZ$. A \emph{family of boundary polarized CY pairs with coefficient $c$ and index dividing $N$} over a Noetherian scheme $T$ consists of a pair $(X,D)$ over $T$ satisfying the following conditions:
    \begin{enumerate}
        \item $X\to T$ is a flat projective morphism of schemes;
        \item $D=cD_1$, where $D_1$ is a relative K-flat Mumford divisor in the sense of \cite[Section 7.1]{Kollar23};
        \item $(X_t,D_t)$ is a boundary polarized CY pair for every $t\in T$;
        \item $\omega_{X/T}^{[N]}(ND)\cong_T\CO_X$;
        \item $\omega_{X/T}^{[m]}(qD_1)$ is flat over $T$ and commutes with base change for every $m,q\in\IZ$.
    \end{enumerate}
\end{defi}
\begin{rmk}
    The definition extends to arbitrary bases by the bootstrapping from the Noetherian case, as in \cite[Definition 3.4]{ABB23} and \cite[Remark 2.30]{BL24}. This is needed to define the moduli stack below.
\end{rmk}
\begin{defi}[{\cite[Definition 2.31]{BL24}}]
    Fix a function $\chi:\IN\to\IZ$. Let $\CM(\chi,N,c)$ denote the category fibered in groupoids over $\text{Sch}_{\Ik}$ where:
    \begin{itemize}
        \item The objects are families of boundary polarized CY pairs $(X,D)\to T$ with coefficient $c$, index dividing $N$, and $\chi(X_t,\omega_{X_t}^{[-m]})=\chi(m)$ for each $t\in T$ and $m\in\IN$.
        \item The morphisms $[(X',D')\to T']\to[(X,D)\to T]$ consist of morphisms of schemes $X'\to X$ and $T'\to T$ such that $X'\to X\times_TT'$ is an isomorphism and $D_1'$ is the divisorial pullback \textup{(}\cite[Definition 4.6]{Kollar23}\textup{)} of $D_1$.
    \end{itemize}
\end{defi}
The following basic property of the moduli stack is proved in \cite[Theorem 2.32]{BL24}.
\begin{thm}
    $\CM(\chi,N,c)$ is an algebraic stack locally of finite type over $\Ik$ with affine diagonal.
\end{thm}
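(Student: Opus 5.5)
The plan is to use the standard representability machinery. I would present $\CM(\chi,N,c)$ as a stack built from a locally closed subscheme of a Hilbert scheme, and then check algebraicity, finite type locally, and affineness of the diagonal separately.

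\textbf{Step 1: embed by a fixed power of the anticanonical sheaf.} Fix $r$ divisible by $N$ such that, for every boundary polarized CY pair $(X,D)$ with coefficient $c$, index dividing $N$ and Hilbert function $\chi$, the sheaf $\omega_X^{[-r]}$ is very ample with vanishing higher cohomology. Boundedness of such pairs is not available a priori. For local finite type we only need, locally on the stack, some $r$ with these properties. By openness of very ampleness and of cohomology vanishing, we can therefore stratify the stack by $r$ into open substacks. A family $(X,D)\to T$ then gives a projective bundle $\IP(f_*\omega^{[-r]}_{X/T})$, whose formation commutes with base change by condition (5). A frame of this bundle yields an embedding $X\subset\IP^{\chi(r)-1}_T$. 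The divisor $D_1$ is K-flat, so by Koll\'ar's theory it gives a point of the relative space of K-flat Mumford divisors, $\mathrm{KDiv}$.

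\textbf{Step 2: local closedness of the parameter scheme.} Consider the locus $H\subset\Hilb\times\mathrm{KDiv}$ of pairs satisfying the following conditions.
\begin{enumerate}
    \item Condition (5) holds for all $m,q$. This is a locally closed condition, using Koll\'ar's results on flatness of reflexive powers over reduced bases and representability of the ``hull'' functor.
    \item The isomorphism $\omega^{[N]}(ND)\cong\CO$ holds fiberwise. This is locally closed: it is triviality of a line bundle, a closed condition on the Picard scheme after fixing the fiberwise condition.
    \item The embedding is given by $\omega^{[-r]}$. This is again a closed condition on Picard.
    \item The fibers are slc. This is open in families satisfying (5).
\end{enumerate}
Then $\CM(\chi,N,c)$ is the quotient $[H/PGL]$, possibly after taking a disjoint union over $r$. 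This shows that it is an algebraic stack, locally of finite type.

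\textbf{Step 3: affine diagonal.} The Isom sheaf between two families is identified with the set of $PGL$-elements carrying one embedded pair to the other. It is a closed subscheme of $PGL$, and also carries the condition on the divisors. Since $PGL$ is affine, the diagonal is affine.

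\textbf{Main obstacle.} I expect the main obstacle to be Step 2(1). Condition (5) is required for \emph{all} $m,q$, so one must show that finitely many checks suffice locally and that the resulting functor is representable by a locally closed subscheme. This is precisely Koll\'ar's theorem on the hull and husk functors, applied to the sheaves $\omega^{[m]}(qD_1)$. I would cite \cite{Kollar23} for it, using the periodicity in $m$ modulo $N$ that follows from condition (4).
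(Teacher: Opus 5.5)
Your route is the standard one and matches the source of this result. The paper gives no argument of its own here: it cites \cite[Theorem 2.32]{BL24}. The same chart structure appears later in the paper, where the locus with $-K$ very ample is open by \cite[Proposition 3.5]{ABB23} and is a quotient $[H/\mathrm{PGL}]$ by \cite[Proposition 3.9]{ABB23}. The strategy is sound. However, your justification for the step you call the main obstacle is wrong as stated, and the diagonal argument is missing a step.

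\textbf{Finitely many hull conditions.} Condition (4) does not give periodicity in $m$ modulo $N$. Since $ND=NcD_1$, it gives $\omega^{[m+N]}((q+Nc)D_1)\cong\omega^{[m]}(qD_1)$. That is a single period $(N,Nc)$ in the $(m,q)$-lattice. The quotient $\IZ^2/\IZ(N,Nc)$ is infinite, since the classes of $(0,q)$ for $q\in\IZ$ are all distinct. So (4) alone does not reduce (5) to finitely many hull conditions.

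What does reduce it is the chart. On $\CM_r$ the sheaf $\omega^{[-r]}_{X/T}$ is invertible, which supplies a second period $(r,0)$. The vectors $(N,Nc)$ and $(r,0)$ span a sublattice of index $rNc<\infty$. So proceed in two stages:
\begin{enumerate}
\item impose base change for $\omega^{[-r]}$ and $\omega^{[N]}(NcD_1)$; both are then flat, compatible with base change and fiberwise invertible, hence invertible;
\item impose base change for a finite set of coset representatives.
\end{enumerate}

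\textbf{Affine diagonal.} The charts $\CM_r\cong[H_r/\mathrm{PGL}_{\chi(r)}]$ overlap, so $\CM$ is their union, not a disjoint union. An open cover by stacks with affine diagonal does not by itself give an affine diagonal. You need that two families over $T$ lie, Zariski locally on $T$, in a common chart. This holds because:
\begin{enumerate}
\item if $\omega^{[-r]}$ is very ample on one fiber and $\omega^{[-r']}$ on the other, then $\omega^{[-rr']}$ is very ample on both;
\item higher cohomology of every $\omega^{[-m]}$ with $m\geq 0$ vanishes by \cite[Proposition 3.5]{ABB23}, so both families lie in the chart $\CM_{rr'}$ near that point.
\end{enumerate}
Your description of $\mathrm{Isom}_T$ as a closed subscheme of a $\mathrm{PGL}$-torsor then applies locally on $T$. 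Since affineness is local on the base, the diagonal of $\CM$ is affine.
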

\subsection{Good moduli spaces}
We recall some basic results about good moduli spaces.
\begin{defi}[{\cite{Alper13}}]
    A \emph{good moduli space} $\phi:\CM\to M$ is a quasi-compact morphism from an algebraic stack to an algebraic space such that
    \begin{enumerate}
        \item $\phi_*$ is exact on quasi-coherent sheaves and
        \item the natural morphism $\CO_M\to\phi_*\CO_{\CM}$ is an isomorphism.
    \end{enumerate}
\end{defi}
The following universal property is frequently used.
\begin{prop}[{\cite{Alper13}}]
    If $\phi:\CM\to M$ is a good moduli space and $\CM$ is locally Noetherian, then $\phi$ is universal among maps from $\CM$ to algebraic spaces. 
\end{prop}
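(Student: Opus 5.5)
This is a standard result from Alper, and the proof I would give is the usual one. The statement is that for a good moduli space $\phi:\CM\to M$ with $\CM$ locally Noetherian, any morphism $f:\CM\to Z$ to an algebraic space factors uniquely through $\phi$.

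\emph{Step 1: the affine case.} First I would reduce to $M$ affine and $Z$ affine. Suppose $M=\Spec A$, so that $A=\Gamma(\CM,\CO_{\CM})$ because $\CO_M\cong\phi_*\CO_{\CM}$. If $Z=\Spec B$, a morphism $\CM\to Z$ is the same as a ring map $B\to\Gamma(\CM,\CO_{\CM})=A$. This gives a unique morphism $M\to Z$ through which $f$ factors. Since good moduli spaces are stable under flat base change on $M$, the factorizations constructed locally on $M$ glue by uniqueness. So we may take $M$ affine.

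\emph{Step 2: $Z$ a general algebraic space.} Choose an étale cover $\{Z_i\}$ of $Z$ by affine schemes and form the open substacks $f^{-1}(Z_i)$; more precisely these are étale over $\CM$. The key input is that a good moduli space sends saturated open substacks to open subspaces. It also preserves disjointness of closed substacks: for disjoint closed $\CZ_1,\CZ_2\subset\CM$, the images $\phi(\CZ_1)$ and $\phi(\CZ_2)$ are disjoint. This holds because $\phi_*$ is exact, so $\CO_M\to\phi_*\CO_{\CZ_1\sqcup\CZ_2}=\phi_*\CO_{\CZ_1}\times\phi_*\CO_{\CZ_2}$ is surjective.

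From this I would show that every point $m\in M$ has an étale neighborhood $M'\to M$ over which $f$ factors through some $Z_i$. To find it, look at the unique closed point $x$ in the fiber $\phi^{-1}(m)$, say $x\mapsto z\in Z_i$. The preimage $f^{-1}(Z_i)$ is a neighborhood of $x$. Its complement is closed and disjoint from the closed point $\overline{\{x\}}$, so by the separation property its image misses $m$. Removing that image gives a neighborhood of $m$ whose preimage maps into the image of $Z_i$.

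When $Z_i\to Z$ is étale but not open, I would handle this with Alper's étale-local argument: base change along étale $M'\to M$ preserves good moduli spaces. Step 1 then gives local factorizations $M'\to Z_i\to Z$. These agree on overlaps by uniqueness, which holds because $\phi$ is surjective and universally submersive and $\CO_M=\phi_*\CO_{\CM}$ makes it an epimorphism onto algebraic spaces. Étale descent then glues them to $M\to Z$.

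\emph{The main obstacle.} The hardest step is making the descent work when $Z$ is not a scheme. This requires verifying that $f^{-1}$ of an étale chart of $Z$ is "saturated" after base change. Locally Noetherian is used exactly here, to ensure that closed points in fibers exist and that the closed-image and separation properties hold.

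In the paper itself, I would simply cite \cite[Theorem 6.6]{Alper13} rather than reprove it.
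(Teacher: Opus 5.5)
The paper gives no proof of this statement; it simply quotes \cite{Alper13}. Your closing sentence, citing Alper's universality theorem, is therefore exactly what the paper does, and that alone suffices. The argument you sketch before it, however, only proves the result when $Z$ is a scheme. In that case the Zariski-local argument is fine: for a closed point $m$, let $x$ be the unique closed point of $\phi^{-1}(m)$ and take an affine open $Z_i\ni f(x)$. Set $M_i=M\setminus\phi(\CM\setminus f^{-1}(Z_i))$, apply Step 1 on $\phi^{-1}(M_i)$, and glue by uniqueness.

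For a general algebraic space there is a genuine gap, and it is precisely the step you call the main obstacle but do not resolve.

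\begin{itemize}
\item If $Z_i\to Z$ is an \'etale chart that is not an open immersion, your separation argument only yields an open $M_i\ni m$ with $f(\phi^{-1}(M_i))$ inside the open image of $Z_i$ in $Z$. That open image is an algebraic space no better than $Z$.
\item The sentence ``Step 1 then gives local factorizations $M'\to Z_i\to Z$'' assumes an \'etale $M'\to M$ such that $f|_{\CM\times_M M'}$ lifts to $Z_i$. The only lift you actually have lives on $\CM\times_Z Z_i$. This is an \'etale representable cover of $\CM$ that need not be of the form $\CM\times_M M'$, and flat base change on $M$ cannot manufacture such an $M'$.
\end{itemize}

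What is missing is a Luna-type argument, which would run roughly as follows.

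\begin{enumerate}
\item Take an affine \'etale chart $Z'\to Z$. Then $\CM':=\CM\times_Z Z'\to\CM$ is representable, \'etale, separated, and stabilizer-preserving, since $Z$ has trivial inertia.
\item By Zariski's Main Theorem, $\CM'\to\CM$ factors as an open immersion into a stack $\widetilde{\CM}$ finite over $\CM$. This $\widetilde{\CM}$ has good moduli space $\Spec_M\phi_*\CO_{\widetilde{\CM}}$.
\item Remove from that space the image of the closed substack $\widetilde{\CM}\setminus\CM'$. This leaves a saturated open neighbourhood of a closed point $x'$ lying over $x$.
\item Finally, you need the statement that an \'etale morphism which is stabilizer-preserving at a closed point with closed image induces, after shrinking, an \'etale map of good moduli spaces with Cartesian square.
\end{enumerate}

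Only with this input do Step 1 and \'etale descent produce $M\to Z$. As written, your sketch establishes universality only for maps to schemes.
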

In order to prove the existence of a good moduli space, we will use the following criterion.
\begin{thm}[{\cite[Theorem A]{AHLH23}}]
\label{Thereom. S Theta criterion}
    Let $\CM$ be a finite type algebraic stack with affine diagonal. Then the following are equivalent:
    \begin{enumerate}
        \item There exists a good moduli space morphism $\CM\to M$ to a separated algebraic space.
        \item $\CM$ is $S$-complete and $\Theta$-reductive with respect to DVRs essentially of finite type over $\Ik$.
    \end{enumerate}
\end{thm}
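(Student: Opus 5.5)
This criterion is quoted from \cite[Theorem A]{AHLH23}, and the paper uses it as a black box. If I had to reprove it, I would follow the local-structure-plus-gluing strategy.

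\emph{The direction (1)$\Rightarrow$(2).} Suppose $\phi:\CM\to M$ is a good moduli space with $M$ separated.
\begin{enumerate}
\item By the Alper--Hall--Rydh local structure theorem, $\CM$ is étale locally over $M$ of the form $[\Spec A/G]$ with $G$ linearly reductive (we are in characteristic $0$).
\item The lifting problems for $\Theta_R=[\Spec R[t]/\IG_m]$ and $\overline{ST}_R=[\Spec R[s,t]/(st-\pi)/\IG_m]$ are étale local on $M$. So it suffices to treat $[\Spec A/G]$.
\item There a morphism from the punctured stack (the complement of the unique closed point) extends. The reason is that $\Theta_R$ and $\overline{ST}_R$ are regular of dimension $2$, so functions extend over the codimension-two closed point by Hartogs. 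One also uses $G$-equivariance and the fact that $\Spec A\to \Spec A^G$ is affine.
\item Separatedness of $M$ is what makes the extension land in the correct chart.
\end{enumerate}

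\emph{The direction (2)$\Rightarrow$(1).} I would proceed in four steps.
\begin{enumerate}
\item \textbf{Reductive stabilizers.} Using $S$-completeness, show every closed point $x$ has linearly reductive stabilizer $G_x$. The stack $BG_x$ inherits $S$-completeness from the residual gerbe, and an affine group $G$ with $BG$ $S$-complete is reductive: a non-trivial unipotent radical would produce a $\overline{ST}_R$-family that fails to extend.
\item \textbf{Local charts.} Apply the local structure theorem to get étale neighbourhoods $f:[\Spec A/G_x]\to\CM$ of $x$, each with good moduli space $\Spec A^{G_x}$.
\item \textbf{Strongly étale charts.} Shrink the charts so that $f$ is stabilizer preserving and sends closed points to closed points. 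Here $\Theta$-reductivity controls closed points in fibres: every point specializes to a unique closed point via a $\Theta$-family whose limit exists. $S$-completeness gives uniqueness of such limits, so the locus where $f$ fails either property is closed and saturated and can be removed.
\item \textbf{Gluing.} Alper's gluing criterion then applies. The induced étale groupoid on the affine quotients $\Spec A^{G_x}$ gives the algebraic space $M$.
\end{enumerate}
Finally, quasi-compactness of $\CM$ gives finite type of $M$, and $S$-completeness gives separatedness via the valuative criterion.

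\emph{Main obstacle.} I expect Step 3 (making the charts strongly étale) to be the hardest part. One must show that failure of closed-point preservation is detected by DVR tests. I would first try to derive this from the existence part of $\Theta$-reductivity applied to the filtrations produced by the Hilbert--Mumford criterion in the local quotient $[\Spec A/G_x]$.
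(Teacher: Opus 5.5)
The paper does not prove this statement. It quotes \cite[Theorem A]{AHLH23} and uses only the implication (2)$\Rightarrow$(1), in Theorem~\ref{Theorem. Existence of good moduli}. Your outline has the right architecture: reductive stabilizers from $S$-completeness, local quotient presentations, improving the charts, gluing, and separatedness of $M$ from $S$-completeness. Your Steps 1, 2 and 4 of (2)$\Rightarrow$(1) are fine in outline. There are, however, two genuine gaps.

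\emph{First gap: (1)$\Rightarrow$(2).} Hartogs' extension of functions over the codimension-two closed point only extends morphisms into \emph{affine} targets. A morphism from $\Theta_R\setminus\{0\}$ or $\overline{\mathrm{ST}}_R\setminus\{0\}$ to $[\Spec A/G]$ is a $G$-torsor together with an equivariant map from its total space to $\Spec A$. Hartogs handles the second piece only after the torsor has been extended, and extending the torsor is the actual content. The mechanism you describe never uses reductivity of $G$. Applied verbatim to $[\Spec\Ik/\IG_a]$, it would show that $B\IG_a$ is $S$-complete, which your own Step 1 of the converse correctly says is false. The affineness of $\Spec A\to\Spec A^G$ plays no role. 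The standard fix has three steps. First, arrange that $[\Spec A/G]\to B\mathrm{GL}_n$ is affine, either via the $\mathrm{GL}_n$-form of the local structure theorem or via Matsushima's theorem that $\mathrm{GL}_n/G$ is affine for reductive $G$. Second, extend the associated vector bundle as the reflexive pushforward; it is locally free because the atlases $\Spec R[t]$ and $\Spec R[s,t]/(st-\pi)$ are regular of dimension $2$. Only then, third, extend the section of the resulting affine morphism by Hartogs.

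\emph{Second gap: Step 3 of (2)$\Rightarrow$(1).} This step is not a shrinking argument; it is the heart of the theorem. The inference ``so the locus where $f$ fails either property is closed and saturated and can be removed'' does not follow. Knowing that each point of $\CM$ has a unique closed specialization says nothing yet about which points of the chart map to closed points. The lemma that upgrades ``strongly \'etale at a closed point'' to ``strongly \'etale on a saturated open'' is \cite[Proposition 2.7]{AFS17}, which the paper applies in Section~\ref{Section. sec 5} only after $Y_1^{\cy}$ exists. That lemma assumes the target already has a good moduli space, so invoking it here would be circular.

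In \cite{AHLH23} this step is replaced by two properties of the charts. They are chosen $\Theta$-surjective: $\Theta_k$-degenerations in $\CM$ of $f(w)$ lift to the chart starting at $w$. This is where $\Theta$-reductivity is used, and it forces closed points to map to closed points. They are also stabilizer preserving at closed points, via what they call unpunctured inertia, which in characteristic $0$ comes from $S$-completeness. Only then does a gluing criterion apply.

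\emph{A further point.} The statement here assumes the valuative conditions only for DVRs essentially of finite type over $\Ik$. That is exactly what the paper needs, since \cite[Theorem 2.34]{BL24} supplies nothing more. Your sketch never checks that (2)$\Rightarrow$(1) invokes only such DVRs.
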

We now define terminology in Theorem \ref{Thereom. S Theta criterion}. Let $R$ be a DVR over $\Ik$ with uniformizer $\pi$. Set
\[
    \overline{\mathrm{ST}}_R:=[\Spec(R[s,t]/(st-\pi))/\IG_m]\quad\text{and}\quad\Theta_R:=[\Spec R[t]/\IG_m],
\]
where the $\IG_m$-action is trivial on $R$ and has weights 1 and -1 on $s$ and $t$, respectively. By abuse of notation, we may write $0\in\overline{\mathrm{ST}}_R$ and $0\in\Theta_R$ for the unique closed points.
\begin{defi}
    Let $\CM$ be an algebraic stack.
    \begin{enumerate}
        \item The stack $\CM$ is \emph{S-complete} if every morphism $\overline{\mathrm{ST}}_R\backslash\{0\}\to\CM$ extends uniquely to a morphism $\overline{\mathrm{ST}}_R\to\CM$, for every DVR $R$.
        \item The stack $\CM$ is $\Theta$-\emph{reductive} if every morphism $\Theta_R\backslash\{0\}\to\CM$ extends uniquely to a morphism $\Theta_R\to\CM$, for every DVR $R$.
    \end{enumerate}
    We say $\CM$ is S-complete or $\Theta$-reductive with respect to DVRs essentially of finite type over $\Ik$ if the respective statement in (1) or (2) holds for DVRs essentially of finite type over $\Ik$.
\end{defi}
For the moduli stack of boundary polarized CY pairs, the following result holds.
\begin{thm}[{\cite[Theorem 2.34]{BL24}}]
\label{Theorem. bpCY stack}
    $\CM(\chi,N,c)$ is S-complete, $\Theta$-reductive, and satisfies the existence part of the valuative criterion for properness, all with respect to DVRs essentially of finite type over $\Ik$. 
\end{thm}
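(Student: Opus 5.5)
Since this statement is quoted from \cite[Theorem 2.34]{BL24}, I would reprove it along the lines of \cite{ABB23, BL24}, translating each valuative condition into a statement about lc CY pairs over a DVR. Let $R$ be a DVR essentially of finite type over $\Ik$, with fraction field $K$ and residue field $\kappa$. The basic tool is \emph{inversion of adjunction}: if $(X,D)\to\Spec R$ is a family in $\CM(\chi,N,c)$, then $(X,D+X_\kappa)$ is lc and $K_X+D+X_\kappa\sim_{\IQ,R}0$. Conversely, an lc pair $(X,D+X_\kappa)$ over $R$ with this property, with $D$ relatively ample, has slc boundary polarized CY fibers by adjunction. So each criterion reduces to producing suitable lc CY models over $R$, or over the surfaces $\Spec R[s,t]/(st-\pi)$ and $\Spec R[t]$, and then checking conditions (4) and (5) of Definition \ref{Definition. family of bpcy}.

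\textbf{Existence part of the valuative criterion.} I would proceed in four steps.
\begin{enumerate}
\item After a finite base change of $R$, take a semistable (or dlt) model $(Y,D_Y+Y_\kappa)$ of the generic fiber $(X_K,D_K)$.
\item Since $K_{X_K}+D_K\sim_\IQ0$, the divisor $K_Y+D_Y+Y_\kappa$ is $\IQ$-linearly equivalent over $R$ to an effective divisor supported on $Y_\kappa$. Run an MMP for this divisor to contract the excess components, which gives a dlt CY model over $R$.
\item Take the relative ample model of $D_Y$. Because $D_Y\equiv_R\vep^{-1}(K_Y+(1+\vep)D_Y+Y_\kappa)$, this is the lc model of a pair of log general type over $R$, so it exists by the relative MMP for lc pairs.
\item On the resulting model $D$ is ample, and $K+D+X_\kappa\sim_{\IQ,R}0$ is preserved throughout. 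Hence the central fiber is a boundary polarized CY pair.
\end{enumerate}

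\textbf{S-completeness and $\Theta$-reductivity.} I would phrase both in terms of filtrations of the section ring $\bigoplus_m H^0(X_K,\CO(mrD_{1,K}))$.
\begin{itemize}
\item For S-completeness, two families $X,X'$ over $R$ that agree over $K$ define two $R$-lattices in each graded piece. Their relative position gives a $\IZ$-filtration, and its extended Rees algebra is a graded algebra over $R[s,t]/(st-\pi)$; its relative $\Proj$ should be the desired extension over $\overline{\mathrm{ST}}_R$.
\item For $\Theta$-reductivity, the test configuration over the closed point determines a filtration of the special fiber's section ring. I would lift it to a filtration on the $R$-family and again take the Rees construction over $R[t]$.
\end{itemize}
In both cases the filtration is induced by divisorial valuations, namely the components of $X'_\kappa$ viewed on $X$, or the valuation of the test configuration. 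These are lc places of the lc CY pair $(X,D+X_\kappa)$. I would then invoke finite generation for filtrations coming from lc places of a CY complement, via a dlt modification extracting those places and running an MMP. This should make the Rees algebra finitely generated and its fibers slc CY with ample boundary. Uniqueness of the extensions follows from separatedness of the diagonal together with the fact that the Rees algebra is determined by its restriction away from the closed point, which is an $S_2$ argument on a surface.

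\textbf{Main obstacle.} I expect the hard part to be showing that the constructed objects are genuinely families in the sense of Definition \ref{Definition. family of bpcy}: the K-flatness of $D_1$, and flatness with base change for every $\omega^{[m]}(qD_1)$ in condition (5). My first attempt would use Koll\'ar's criterion \cite{Kollar23}. Since $N(K+D)\sim0$, the index-one cover is an lc pair with $K$ Cartier, whose central fiber is a Cartier divisor. Because lc singularities are Du Bois, Koll\'ar's depth/$S_3$-type results for divisorial sheaves along a Cartier divisor in an lc pair should give the required flatness and base change. The finite generation of the lc-place filtrations is the second delicate point, and there I would rely on the known results for complements \cite{ABB23, BL24}.
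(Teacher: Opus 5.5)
The paper gives no proof of this statement; it is quoted from \cite[Theorem 2.34]{BL24}, which in turn rests on \cite{ABB23}. Your set-up for S-completeness is sound in outline: two $R$-lattices, a $\IZ$-filtration, the extended Rees algebra over $R[s,t]/(st-\pi)$, and the components of $X'_\kappa$ as lc places of $(X,D+X_\kappa)$. Measured against what a proof must do, however, the proposal has genuine gaps.

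\textbf{$\Theta$-reductivity.} Your description has input and output reversed. Since $\Spec R[t]\setminus\{0\}=\{t\neq0\}\cup\{\pi\neq0\}$, a morphism $\Theta_R\setminus\{0\}\to\CM(\chi,N,c)$ consists of a family $(X,D)\to\Spec R$ together with a test configuration of the \emph{generic} fiber $(X_K,D_K)$ over $\IA^1_K$. The test configuration of $X_\kappa$ is what you must \emph{produce}. The filtration to extend is therefore the one on $\bigoplus_m H^0(X_K,\CO(mrD_{1,K}))$ given by the valuation(s) of that test configuration, intersected with the lattices $H^0(X,\CO(mrD_1))$. These valuations are trivial on $K$, and that is why they are lc places of $(X,D+X_\kappa)$. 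A test configuration of the special fiber does not even give a valuation of $K(X)$. Lifting it to the $R$-family is a different statement, and it fails in general: take a generic fiber with only trivial test configurations and a special fiber with a nontrivial one.

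\textbf{Non-normal pairs.} All your tools are for normal pairs: semistable reduction, dlt modifications extracting lc places, and the MMP. But $\CM(\chi,N,c)$ contains non-normal slc pairs, for example the constant family with fiber $(X_0,\frac19D_0)$ from this paper. You would need to normalize, argue componentwise with the conductor in the boundary, and glue back. In the CY setting this is exactly where new input is needed. Extensions of the components over $R$ are not unique, so the involution on the normalized conductor need not extend. Moreover, Koll\'ar's gluing theorem in \cite{Kollar13} guarantees finiteness of the gluing relation only when $K+\Delta$ is ample, which fails here.

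\textbf{Step 3 of the existence argument.} This step breaks down as soon as $D_Y$ has a component of coefficient one, which the definition allows; indeed $\frac19D_0=L_1+L_2+L_3$ in this paper. Then $(Y,(1+\vep)D_Y+Y_\kappa)$ is not lc, so the ample model of $D_Y$ is not supplied by the lc MMP for pairs of log general type. Your argument only covers the case $\lfloor D_Y\rfloor=0$, where on the $\IQ$-factorial dlt model $D_Y$ contains no lc center.

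\textbf{Conditions (4)--(5) and K-flatness.} You rightly flag conditions (4)--(5) of Definition~\ref{Definition. family of bpcy} and K-flatness as the heart of the matter, but you only assert that they ``should'' follow. As it stands, the proposal is a road map rather than a proof.
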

\subsection{Source}
We recall the definition of the source and the type of an slc CY pair.
\begin{defi}
    A \emph{dlt modification} of an lc pair $(X,D)$ is a proper birational morphism
    \[
        f:(Y,D_Y)\longrightarrow(X,D),
    \]
    such that $(Y,D_Y)$ is dlt and $K_Y+D_Y=f^*(K_X+D)$. Such modifications exist by \cite[Theorem 1.34]{Kollar13}. If $S\subset Y$ is an lc center of $(Y,D_Y)$, then $(S,D_S)$ is a dlt pair, where $D_S:=\mathrm{Diff}_S^*(D_Y)$ is the different. See \cite[Section 4.2]{Kollar13} for details. 
\end{defi}
\begin{defi}[Source]
    Let $(X,D)$ be a CY pair.
    \begin{enumerate}
        \item If $(X,D)$ is klt, then its source is $\mathrm{Src}(X,D):=(X,D)$.
        \item If $(X,D)$ is not klt, then let $(\overline{X},\overline{D}):=\sqcup_{i=1}^r(\overline{X}_i,\overline{D}_i)$ denote its normalization and its decomposition into irreducible lc pairs. For arbitrary $1\leq i\leq r$, let $(Y,D_Y)\to (\overline{X}_i,\overline{D}_i)$ be a dlt modification and $S\subset Y$ be a minimal lc center. We set $\mathrm{Src}(X,D)$ equal to the crepant birational equivalence class of $(S,D_S)$.
    \end{enumerate}
\end{defi}
The source of a CY pair is independent of the choice of dlt modification and minimal lc center by \cite[Lemma 8.2]{ABB23}, which follows from \cite{Kollar16}. For a surface pair, we record the dimension of its source with the following terminology.
\begin{defi}
    A CY surface pair $(X,D)$ is called
    \begin{enumerate}
        \item Type \uppercase\expandafter{\romannumeral1} if $(X,D)$ is klt,
        \item Type \uppercase\expandafter{\romannumeral2} if $\mathrm{Src}(X,D)$ is a curve pair, and
        \item Type \uppercase\expandafter{\romannumeral3} if $\mathrm{Src}(X,D)$ is a point.
    \end{enumerate}
\end{defi}
\subsection{Test configuration}
\begin{defi}
    A \emph{test configuration} of a boundary polarized CY pair $(X,D)$ is the data of
    \begin{enumerate}
        \item a family of boundary polarized CY pairs $(\CX,\CD)\to\IA^1$,
        \item a $\IG_m$-action on $(\CX,\CD)$ extending the standard action on $\IA^1$, and
        \item an isomorphism $(\CX_1,\CD_1)\cong(X,D)$.
    \end{enumerate}
\end{defi}
The central fiber is preserved by the $\IG_m$-action, and its boundary is $\IG_m$-invariant. Thus the $\IG_m$-action restricts to a $\IG_m$-action on the central pair.
\begin{defi}
    We say that there exists a \emph{weakly special degeneration}
    \[
        (X,D)\rightsquigarrow(X_0,D_0)
    \]
    if there exists a test configuration $(\CX,\CD)$ of $(X,D)$ such that $(\CX_0,\CD_0)\cong(X_0,D_0)$.
\end{defi}
\subsection{S-equivalence}
The following equivalence relation is useful for describing the points of good moduli spaces.
\begin{defi}
    Two boundary polarized CY pairs $(X,D)$ and $(X',D')$ are \emph{S-equivalent} if there exist weakly special degenerations of the pairs
    \[
        (X,D)\rightsquigarrow(X_0,D_0)\leftsquigarrow(X',D')
    \]
to a common boundary polarized CY pair $(X_0,D_0)$.
\end{defi}
\begin{rmk}
\label{Remark. property of S-equiv}
    We use the following properties of S-equivalence.
    \begin{enumerate}
        \item S-equivalence is an equivalence relation \cite[Proposition 6.9]{ABB23}.
        \item If two boundary polarized CY pairs $(X,D)$ and $(X',D')$ are S-equivalent, then their sources are equal \cite[Proposition 8.7]{ABB23}. In particular, they are of the same type.
        \item Let $(X,D)\to C$ and $(X',D')\to C$ be families of boundary polarized CY pairs over the germ of a smooth pointed curve $0\in C$. If there is an isomorphism 
        \[
            (X,D)|_{C^{\circ}}\cong(X',D')|_{C^{\circ}}
        \]
        over $C^{\circ}:=C\backslash0$, then $(X_0,D_0)$ and $(X_0',D_0')$ are S-equivalent \cite[Corollary 1.2]{ABB23}.
    \end{enumerate}
\end{rmk}
\subsection{Local VGIT}\label{Section: local VGIT}
We adopt the terminology of \cite{AFS17}.
\begin{defi}
    \label{Definition. Local presentation}
    Let $\CX$ be an algebraic stack of finite type over an algebraically closed field $k$, and let $x\in\CX(k)$ be a closed point. We say that $f:\CW\to\CX$ is a \emph{local quotient presentation} around $x$ if
    \begin{enumerate}
        \item the stabilizer $G_x$ of $x$ is linearly reductive;
        \item there is an isomorphism $\CW\cong[\Spec A/G_x]$, where $A$ is a finite type $k$-algebra;
        \item the morphism $f$ is \'{e}tale and affine;
        \item there exists a point $w\in\CW(k)$ such that $f(w)=x$ and $f$ induces an isomorphism $G_w\cong G_x$.
    \end{enumerate}
\end{defi}
Let $G$ be a linearly reductive group acting on an affine scheme $X=\Spec A$ by $\sigma:G\times X\to X$. Denote $\IG_m=\Spec(k[t,t^{-1}])$ and let $\theta:G\to\IG_m$ be a character. Set 
\begin{equation}
\label{Equation. def of An}
    A_n:=\{f\in A|\sigma^*(f)=\theta^*(t)^{-n}f\}.
\end{equation}
We define the \emph{VGIT ideals with respect to $\theta$} to be:
\[
    I_{\theta}^{+}=(f\in A|f\in A_n\text{ for some }n>0),
\]
\[
    I_{\theta}^-=(f\in A|f\in A_n\text{ for some }n<0).
\]
The \emph{VGIT $(+)$-chamber and $(-)$-chamber of $X$ with respect to $\theta$} are the open subschemes
\[
    X_{\theta}^+:=X\backslash \IV(I_{\theta}^+)\hookrightarrow X,\quad X_{\theta}^-:=X\backslash \IV(I_{\theta}^-)\hookrightarrow X.
\]
Since the open subschemes $X_{\theta}^+$ and $X_{\theta}^-$ are both $G$-invariant, we have stack-theoretic open immersions
\[
    [X_{\theta}^+/G]\hookrightarrow[X/G]\hookleftarrow[X_{\theta}^-/G].
\]
We refer to these open immersions as the \emph{VGIT $(+)/(-)$-chambers of $[X/G]$ with respect to $\theta$}.
\section{The CY-moduli of unmarked cubic surfaces}
\label{Section. moduli space}
We begin with the construction in \cite[Section 7.1]{BL24}.
\begin{construction}
\label{Construction. moduli Ycy}
For a smooth cubic surface $X$, put
\[
    \chi(m):=\chi(X,\omega_X^{[-m]})=\frac{3}{2}(m^2+m)+1,\quad \CM:=\CM(\chi,9,\frac19).
\]
Let $\CD\CP_{3,9}\subset\CM$ be the open substack parametrizing pairs $(X,\frac19D)$, where $X$ is a smooth cubic surface and $D\in|-9K_X|$ is a smooth curve. Denote the seminormalization of its closure in $\CM$ by $\CD\CP_{3,9}^{\cy}$. Let $\CY$ denote the seminormalization of the locally closed substack of $\CD\CP_{3,9}^{\text{CY}}$ parametrizing pairs $(X,\frac{1}{9}\sum_{i}L_i)$, where $X$ is a smooth cubic surface and $L_i$'s are the 27 lines on it. Define $\CY^{\text{CY}}$ to be the seminormalization of the closure of the image of $\CY$ in $\CD\CP_{3,9}^{\text{CY}}$. Finally, let $\CY_1^{\cy}$ be the substack of $\CY^{\cy}$ parametrizing pairs $(X,\frac19 D)$ such that $K_X$ is Cartier.
\end{construction}
The construction gives morphisms
\begin{equation}
\label{Equation. construct Y1cy}
    \CY_1^{\cy}\hookrightarrow\CY^{\cy}\longrightarrow\CD\CP_{3,9}^{\cy}\longrightarrow\CM,
\end{equation}
where the first morphism is an open immersion and the other two morphisms are both constructed by taking closed substacks and seminormalization respectively. We refer to $\CY_{1}^{\text{CY}}$ as the \emph{index one CY-moduli stack of unmarked cubic surfaces}. 

Define $\CY^{\rm{K}}$ (resp. $\CY^{\text{KSBA}}$) to be the substack of $\CY^{\text{CY}}$ parametrizing boundary polarized CY pairs $(X,\frac19 D)$ such that $(X,\frac{1-\vep}{9}D)$ is K-semistable (resp. $(X,\frac{1+\vep}{9}D)$ is KSBA-stable) for $0<\vep\ll 1$. From \cite[Theorem 3.1 and Corollary 3.8]{Zhao24} and \cite[Theorem 1.5]{GKS21}, we have
\[
    \CY^{\mathrm{K}}\subset \CY_1^{\cy}\quad\text{and}\quad \CY^{\ksba}\subset\CY_1^{\cy}.
\]
\subsection{Existence of the good moduli space} The general member of $\CY_1^{\cy}$ has index 9, so \cite[Theorem 3.6]{BL24} does not directly give the good moduli space needed here. We instead verify the valuative conditions of Theorem \ref{Thereom. S Theta criterion}.

Consider the pair $(X_0,\frac19 D_0)$, where
\begin{equation}
\label{Equation. def of p0}
    X_0=(x_1x_2x_3=0)\subset \IP^3_{[x_0:x_1:x_2:x_3]},\quad D_0=(x_0^9=0)|_{X_0}\in |-9K_{X_0}|.
\end{equation}
The normalization of $(X_0,\frac19D_0)$ is a disjoint union of three copies of $(\IP^2_{[a:b:c]},(abc=0))$, thus $(X_0,\frac19D_0)$ is slc. Let $H_0=\IV(x_0)|_{X_0}$. By adjunction, $K_{X_0}\sim -H_0$, while $D_0=9H_0$. Thus 
 \[
     K_{X_0}+\frac19D_0\sim0,
 \]
 and $\frac19D_0=H_0$ is an ample Cartier divisor. Hence $(X_0,\frac19D_0)$ is a boundary polarized CY pair.
\begin{prop}
    \label{Proposition. K S-equiv CY}
    Any pair in $\CY^{\cy}(\Ik)$ is S-equivalent to either
    \begin{enumerate}
        \item $(X,\frac19\sum_iL_i)$ where $X$ is a cubic surface with at worst $A_1$-singularities and $L_i$'s are the 27 lines on it; or
        \item the pair $(X_0,\frac19D_0)$ in (\ref{Equation. def of p0}).
    \end{enumerate}
\end{prop}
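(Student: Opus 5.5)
The plan is to compare any pair in $\CY^{\cy}(\Ik)$ with a pair coming from the K-moduli side, using Remark \ref{Remark. property of S-equiv}(3), and then use the known description of K-polystable limits of cubic surfaces with their lines.

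Let $(X,\frac19D)\in\CY^{\cy}(\Ik)$. Since $\CY^{\cy}$ is the seminormalized closure of $\CY$, there is a smooth pointed curve germ $0\in C$ and a family $(\CX,\frac19\CD)\to C$ in $\CY^{\cy}$ whose central fiber is $(X,\frac19D)$. Its general fiber is $(X_t,\frac19\sum L_{i,t})$ with $X_t$ a smooth cubic surface. Next we produce a second family over $C$, possibly after a finite base change, which agrees with the first over $C^\circ$ and has a K-polystable central fiber. By \cite[Theorem 3.1]{Zhao24}, the K-moduli of $(X,\frac{1-\vep}{9}\sum L_i)$ agrees with the GIT moduli of cubic surfaces. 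This moduli space is proper, so the family $(X_t,\frac{1-\vep}{9}\sum L_{i,t})$ has a K-polystable limit $(X',\frac{1-\vep}{9}D')$. Here $X'$ is either a cubic with at worst $A_1$-singularities carrying its $27$ lines (counted with multiplicity through the limit), or the $3A_2$ cubic $xyz=w^3$ with its limiting boundary.

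For $0<\vep\ll1$, the pair $(X',\frac19D')$ lies in $\CY^{\mathrm K}$ and is a boundary polarized CY pair. Remark \ref{Remark. property of S-equiv}(3) then shows that $(X,\frac19D)$ is S-equivalent to $(X',\frac19D')$.

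It remains to analyse the $3A_2$ case. On $X'=(xyz=w^3)$ the lines are exactly $x=w=0$, $y=w=0$ and $z=w=0$, and the limit of $\sum L_i$ is $9$ times their sum. This limit divisor is $(w^9=0)|_{X'}$, since $(w=0)\cap X'$ is the three lines. The $\IG_m$-action $\lambda(t)\cdot[x:y:z:w]=[x:y:z:t w]$ has weights chosen so that the limit of $xyz-w^3$ as $t\to 0$ is $xyz$. This gives a weakly special degeneration (a product-type test configuration) of $(X',\frac19(w^9))$ to $(X_0,\frac19D_0)$ after renaming coordinates. The degeneration stays in the class because $(w=0)$ is preserved and the total space is the family $xyz=s^3w^3$ with boundary $w^9$. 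Its fibers are boundary polarized CY pairs: $\omega^{-1}=\CO(1)$ is Cartier, and the central fiber is slc as computed above. Hence $(X',\frac19D')$ is S-equivalent to $(X_0,\frac19D_0)$, and by transitivity (Remark \ref{Remark. property of S-equiv}(1)) case (2) holds.

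The main obstacle is the first step, namely identifying precisely which boundaries occur on the K-polystable limits, i.e. using \cite[Corollary 3.8]{Zhao24}. For $A_1$ cubics one needs that the limit boundary is exactly the sum of the $27$ lines with multiplicity, which follows from continuity of the Fano scheme of lines over the locus of cubics with only $A_1$-singularities. For the $3A_2$ cubic one needs that all $27$ limiting lines lie in the three lines, which holds because $X'$ contains only those three lines. Constructing the family over $C$ requires a finite base change, which is harmless for S-equivalence.
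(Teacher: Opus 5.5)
Your proposal is correct and is essentially the paper's argument: fill in the family on the K-moduli side after a finite base change, apply Remark~\ref{Remark. property of S-equiv}(3), use Zhao's classification of the K-polystable limits, and in the $3A_2$ case degenerate via $xyz=s^3w^3$, which is the paper's test configuration $x_1x_2x_3=tx_0^3$ pulled back along $t=s^3$. A few points need tightening. First, the paper does not go straight to a K-polystable filling: it takes a K-semistable one (via \cite{LXZ22} and \cite{BLXZ25}) and then passes to the K-polystable pair by the special degeneration of \cite[Theorem 1.3]{LWX21}, which is itself an S-equivalence. Second, this test configuration is not of product type, since its central fiber $xyz=0$ is not isomorphic to $X'$. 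Third, the multiplicity $9$ on each line does not follow merely from $X'$ containing only three lines; it is forced by log canonicity of $(X',\frac19D')$ at the three $A_2$ points. Each of these points lies on two of the lines, and pulling back to the $\mu_3$-cover $\IA^2$ shows every multiplicity is at most $9$, while the multiplicities sum to $27$.
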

\begin{proof}
We use the same strategy as \cite[Lemma 9.10]{ABB23}. Fix a pair $(X,\frac19D)\in\CY^{\cy}(\Ik)$. Let $(\CX,\frac19\CD)\to T$ be a family of boundary polarized CY pairs over the germ of a curve $0\in T$ such that $(\CX_0,\frac19\CD_0)\cong (X,\frac19 D)$ and $(\CX_K,\frac19D_K)\in \CY(K)$, where $K=K(T)$. Then \cite{OSS16} and \cite[Corollary 3.8]{Zhao24} imply that $(\CX_K,\frac{1-\vep}{9}\CD_K)$ is a K-semistable log Fano pair for $0<\vep\ll1$. By \cite[Theorem 1.3]{LXZ22} and \cite[Theorem 1.1]{BLXZ25}, after a possible finite base change $T'\to T$, there exists a family $(\CX',\frac19\CD')\to T'$ in $\CY^{\mathrm{K}}$ such that $$(\CX'_{K'},\frac19\CD'_{K'})\cong(\CX_K,\frac19\CD_K)\times_KK'\quad \text{and}\quad (\CX_0',\frac19\CD_0')\in\CY^{\rm{K}}(\Ik),$$  where $K'=K(T')$. Then it follows from Remark \ref{Remark. property of S-equiv}(3) that $(X,\frac19 D)$ is S-equivalent to $(\CX_0',\frac19\CD_0')$. Hence by \cite[Theorem 1.3]{LWX21}, $(X,\frac19 D)$ is S-equivalent to a pair $(X_1,\frac19 D_1)\in\CY^{\mathrm{K}}(\Ik)$ such that $(X_1,\frac{1-\vep}{9}D_1)$ is K-polystable.

The classification in \cite[Theorem 3.3 and Corollary 3.8]{Zhao24} shows that $(X_1,\frac19D_1)$ is isomorphic either to a pair in (1) or to
\[
    \big((x_1x_2x_3=x_0^3),\frac19(x_0^9=0)\big)\subset\IP^3_{[x_0:x_1:x_2:x_3]}.
\]
In the latter case, consider the weakly special test configuration
\begin{equation}
    \label{Equation. Kps point to 3planes}
        \big((x_1x_2x_3-tx_0^3=0),\frac19(x_0^9=0)\big)\subset\IP^3_{[x_0:x_1:x_2:x_3]}\times\IA^1_t\longrightarrow\IA^1_t.
\end{equation}
The central fiber is $(X_0,\frac19D_0)$. Hence $(X_0,\frac19D_0)$ is in $\CY^{\cy}(\Ik)$ and S-equivalent to $(X_1,\frac19D_1)$. 
\end{proof}
\begin{cor}
    \label{Corollary. Only Type 1 and 3}
    Every pair $(X,\frac19 D)$ in $\CY^{\cy}(\Ik)$ is of Type \uppercase\expandafter{\romannumeral1} or Type \uppercase\expandafter{\romannumeral3}.
\end{cor}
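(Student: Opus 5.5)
By Proposition \ref{Proposition. K S-equiv CY}, every pair $(X,\frac19D)\in\CY^{\cy}(\Ik)$ is S-equivalent either to a pair of type (1) or to $(X_0,\frac19D_0)$. By Remark \ref{Remark. property of S-equiv}(2), S-equivalent pairs have the same source, and in particular the same type. So it suffices to compute the type of these two kinds of representatives.

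\emph{Case (1).} Here $X$ is a cubic surface with at worst $A_1$-singularities and $D=\sum_i L_i$ is the sum of its 27 lines. We show that $(X,\frac19 D)$ is klt, hence of Type I. Since $A_1$-singularities are klt, the only question is the coefficients along $D$ and its singular points. Lines through an $A_1$-point lie in a finite configuration: on a one-nodal cubic, 6 lines pass through the node, each counted twice in the 27. Thus the multiplicity of $D$ at any point is bounded by a small absolute constant. At a smooth point at most 3 lines meet, so
\[
\mult_p\Big(\tfrac19D\Big)\le \tfrac{3}{9}<1,
\]
which gives klt at smooth points. At an $A_1$-point we pass to the minimal resolution, or equivalently the local double cover by $\IA^2$. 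The pullback of $\frac19D$ has multiplicity at most $\frac{2\cdot 6}{9}\cdot\frac12\cdot 2$, which is still $<2$; alternatively we bound the discrepancy of the exceptional $(-2)$-curve directly, which gives $-\frac{12}{9}\cdot\frac12>-1$. In either form the pair is klt there. Alternatively, and more cleanly, one may appeal to the K-semistability of $(X,\frac{1-\vep}{9}D)$ from \cite[Corollary 3.8]{Zhao24}: a K-semistable log Fano pair is klt. Since the klt condition is open in the coefficient, and the lc threshold computation above shows it is not attained at $c=\frac19$, the pair remains klt at $c=\frac19$. I would present the direct local computation.

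\emph{Case (2).} The pair is $(X_0,\frac19D_0)$. Its normalization is three copies of $(\IP^2,(abc=0))$, which is toric with its full toric boundary. The coordinate point $a=b=c=0$ is a $0$-dimensional lc center, and the pair is already dlt (log smooth snc). A minimal lc center is therefore a point, so $\mathrm{Src}$ is a point and the pair is of Type III.

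The main obstacle is the careful check in Case (1) at the singular points, namely counting how many lines pass through an $A_1$-point and with what multiplicity in $D$, since the boundary ``sum of 27 lines'' on a nodal cubic counts some lines with multiplicity two. If this bookkeeping is delicate, I would instead rely on openness of klt from the K-semistable side together with a bound on the log canonical threshold of $D$ showing $\mathrm{lct}(X;D)>\frac19$.
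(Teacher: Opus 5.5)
Your overall structure is the paper's. You reduce via Proposition~\ref{Proposition. K S-equiv CY} and Remark~\ref{Remark. property of S-equiv}(2) to the two representatives, and you treat $(X_0,\frac19D_0)$ through its normalization, three copies of $(\IP^2,(abc=0))$. One small slip there: $a=b=c=0$ is not a point of $\IP^2$; the $0$-dimensional lc centers are the coordinate points, e.g.\ $a=b=0$. The paper disposes of Case (1) by citing \cite[Proposition 5.8]{GKS21}. Your direct computation in place of that citation contains a step that is false as written.

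On a cubic with $A_1$-singularities, $D$ is the limit of the 27 lines, i.e.\ the push-forward from the length-$27$ Fano scheme. So a line through exactly one node occurs in $D$ with multiplicity $2$, and a line through two nodes occurs with multiplicity $4$; on the Cayley cubic, for instance, the six lines joining the nodes each count $4$ times. Hence your bound $\mult_p(\frac19D)\le\frac39$ at smooth points is wrong. On the Cayley cubic, at a smooth point where a line joining two nodes meets one of the three remaining lines, one has $\mult_p(\frac19D)=\frac59$. At the nodes you likewise only treat the one-nodal configuration.

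The conclusion is still true, but the bookkeeping must be redone.
\begin{itemize}
\item At a smooth point $p$, the lines through $p$ lie in the plane cubic $T_pX\cap X$. So there are at most three, they are pairwise transverse, and each has coefficient at most $\frac49$. One blow-up gives discrepancy $1-\sum b_\ell\ge 1-\frac{12}{9}>-1$, followed by an snc pair with coefficients $<1$, so the pair is klt at $p$.
\item At a node $n$, each line through $n$ pulls back to the minimal resolution as $\ell'+\frac12E$. In every case (one to four nodes) the multiplicities of the lines through $n$ sum to $12$: namely $6\cdot2$, $4+4\cdot2$, $2\cdot4+2\cdot2$, $3\cdot4$. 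So $E$ receives coefficient $\frac23$, and the strict transforms meet $E$ transversally at distinct points. Hence the pair is klt at $n$.
\end{itemize}

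Your fallback through K-semistability does not remove the need for this computation. Klt of $(X,\frac{1-\vep}{9}D)$ for all $0<\vep\ll1$ only yields that $(X,\frac19D)$ is lc; openness of the klt condition in the coefficient runs in the other direction. Klt at $\frac19$ is exactly the statement $\mathrm{lct}(X;D)>\frac19$, which the local computation, or the citation of \cite{GKS21}, has to supply.
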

\begin{proof}
    The pairs in the first case of Proposition \ref{Proposition. K S-equiv CY} are all klt by \cite[Proposition 5.8]{GKS21}, thus of Type \uppercase\expandafter{\romannumeral1}. Since each component of the normalization of $(X_0,\frac19D_0)$ is dlt with a 0-dimensional minimal lc center, it is of Type \uppercase\expandafter{\romannumeral3}. The assertion therefore follows from Remark \ref{Remark. property of S-equiv}(2).
\end{proof}
\begin{lem}
\label{Lemma. 2-complement}
    Let $(X,D)$ be a Type \uppercase\expandafter{\romannumeral3} boundary polarized CY surface pair. There exists an effective $\IQ$-divisor $B$ on $X$ such that $(X,B)$ is slc and $2(K_X+B)\sim0$.
\end{lem}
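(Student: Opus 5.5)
We plan to use the structure theory of Type III log Calabi--Yau pairs: the coregularity-zero (Type III) condition forces the existence of complements of index at most $2$. The statement is essentially a special case of known results on complements for pairs of coregularity zero (Filipazzi--Mauri--Moraga: an lc CY pair of coregularity $0$ admits a $2$-complement, and in fact $\frac{1}{2}$-integral boundaries). Our first step is to reduce to the normal case. Let $\nu:\overline X=\sqcup_i\overline X_i\to X$ be the normalization, with conductor $\Gamma$, and write $K_{\overline X_i}+\Gamma_i+\overline D_i=\nu^*(K_X+D)\sim_{\IQ}0$. Each $(\overline X_i,\Gamma_i+\overline D_i)$ is an lc CY surface pair, and its source coincides with that of $(X,D)$. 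So each has coregularity $0$, meaning a dlt modification has a $0$-dimensional minimal lc center.

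The second step treats each normal component. On a dlt modification $(Y,\Gamma_Y+D_Y)$ of $(\overline X_i,\Gamma_i+\overline D_i)$, the reduced boundary $\lfloor \Gamma_Y+D_Y\rfloor$ contains a cycle or chain of curves meeting at a point. We then run the standard argument. Adjunction to a component $C\cong\IP^1$ of the reduced boundary gives a pair $(C,\mathrm{Diff})$ of coregularity $0$. This pair has a $2$-complement, namely two points with coefficient one, or four points with coefficient $\frac12$. By Kawamata--Viehweg vanishing (using that $-(K_Y+\Gamma_Y)$ is pseudo-effective/nef after an MMP, and that $X$ is log Fano since $D$ is ample), this lifts to a $2$-complement $B_Y\ge \lfloor\Gamma_Y+D_Y\rfloor$ on $Y$. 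We then push it forward to $\overline X_i$. Here we are free to replace $D$: we only need $2(K+B)\sim 0$, and we do not need $B$ to be comparable to $D$.

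The third step glues. We must choose the complements on the components compatibly along the conductor. We require that $B_i\ge\Gamma_i$, that the differents on $\Gamma$ agree under the gluing involution, and that the sections of $\omega^{[2]}$ descend. Since the conductor curves are lc centers of coregularity $0$ and their differents are determined by the gluing data, we choose the complement on each $\Gamma$-curve first: it must be the $2$-complement restricting to the points of the double locus. We then extend to each component, using the inductive lifting from the conductor. Descent of $2(K_{\overline X}+\Gamma+\overline B)\sim0$ to $X$ uses Koll\'ar's gluing theory. The obstruction is a monodromy character of the dual complex, which has order dividing $2$, and it becomes trivial after passing to index $2$.

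The main obstacle is this gluing/descent step: ensuring that the complement descends to a Weil divisor $B$ on $X$ with $2(K_X+B)$ \emph{linearly} trivial, not just $\IQ$-linearly trivial. Our first attempt is to cite directly the theorem that slc log CY pairs of coregularity $0$ admit $2$-complements (Filipazzi--Mauri--Moraga, including the non-normal case). In dimension two, we would argue by hand: the dual complex of a Type III surface is a circle or a sphere, and the orientation character gives exactly the factor $2$.
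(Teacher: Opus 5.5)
The step you flag as the main obstacle contains the entire content of the lemma, and your proposal does not get past it.

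The Filipazzi--Mauri--Moraga theorem is an index statement about a \emph{given} coregularity-zero pair: $2\lambda(K_X+B)\sim0$, with $\lambda$ the Weil index of $K_X+B$. It does not construct a new boundary. Applied to $(X,D)$ it only gives $2\lambda(K_X+D)\sim0$, with $\lambda$ controlled by the coefficients of $D$ (in this paper $D=\frac19D_1$). Moreover, a complement of a log CY pair in the usual sense ($B\geq D$) is forced to equal $D$. So you need a genuinely new boundary on a possibly non-normal $X$, and your by-hand construction of it has holes:
\begin{enumerate}
\item On a double curve $C$, coregularity zero forces the different to have a coefficient-one point, which any complement must keep. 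So ``four points with coefficient $\frac12$'' is not an option.
\item Only $\mathrm{Diff}_{\Gamma^n}(\overline D)$ and $\mathrm{Diff}_{\Gamma^n}(0)$ are known to be $\tau$-invariant, not the partial differents of $\lfloor\Gamma_Y+D_Y\rfloor$ that you lift from. You therefore have to fix one $\tau$-compatible $2$-complement on every double curve and lift it simultaneously from the whole conductor of each component. You do not show this is possible.
\item The vanishing you invoke is not justified by ``$X$ is log Fano''. The pair $(\overline X_i,\Gamma_i)$ is only lc, and $\overline X_i$ need not be klt away from $\Gamma_i$, so the lifting must be done on a dlt model after perturbation.
\item The descent step (``a monodromy character of order $2$'') is asserted, not proved. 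What is actually needed is that, for sections of $\omega^{[2]}$, the iterated residues at a node agree from both branches. Then all double residues along the connected non-klt locus of each component can be normalized to $1$, and one must check this normalization is compatible with $\tau$.
\end{enumerate}

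The paper avoids all of this by degenerating first. By \cite[Theorem 5.9]{BL24}, $(X,D)$ admits a weakly special degeneration to $(\CX_0,\CD_0)$ whose normalization is a union of toric pairs. There the conductor together with the strict transform of $\CD_0$ is the reduced toric boundary, so $\CD_0$ is integral, and \cite[Theorem B.1]{ABB23} gives $2(K_{\CX_0}+\CD_0)\sim0$ for this Type III pair. Then \cite[Lemma 12.2]{ABB23} extends this $\IG_m$-invariant $2$-complement to a divisor $\CB$ on $\CX$ with $2(K_{\CX/\IA^1}+\CB)\sim0$, and $B$ is its restriction to the general fiber. This is the idea missing from your plan: an integral complement exists for free on the toric degeneration and is deformed back, so no complement ever has to be constructed and glued on $X$ itself. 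A complement-theoretic proof in dimension two may well be possible, but it would require carrying out each of the steps above rather than citing them.
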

\begin{proof}
    By \cite[Theorem 5.9]{BL24}, there exists a weakly special degeneration $(\CX,\CD)\to \IA^1$ of $(X,D)$ such that the normalization of $(\CX_0,\CD_0)$ is a union of toric pairs. On every component of the normalization of $(\CX_0,\CD_0)$, the conductor together with the strict transform of $\CD_0$ is the reduced toric boundary. Hence $\CD_0$ is an integral Weil divisor. Since $(\CX_0,\CD_0)$ is of Type \uppercase\expandafter{\romannumeral3},  \cite[Theorem B.1]{ABB23} implies that 
    \[
    2(K_{\CX_0}+\CD_0)\sim 0.
    \]
 Since $\CD_0$ is $\IG_m$-invariant, \cite[Lemma 12.2]{ABB23} gives a $\IG_m$-equivariant $\IQ$-divisor $\CB$ on $\CX$ such that $(\CX,\CB)\to\IA^1$ is a family of boundary polarized CY pairs with $2(K_{\CX/\IA^1}+\CB)\sim0$. Let $B$ be the restriction of $\CB$ to the general fiber. Then $(X,B)$ is slc and $2(K_X+B)\sim 0$.
\end{proof}
\begin{prop}
    \label{Proposition. Extension on surfaces}
    Let $S$ be a smooth surface essentially of finite type over $\Ik$, $0\in S$ a closed point, and $S^{\circ}:=S\backslash\{0\}$. If $f:(X,\frac19D)\to S$ is a family in $\CY^{\cy}$ such that its restriction $(X^{\circ},\frac19D^{\circ})\to S^{\circ}$ is in $\CY_1^{\cy}$, then $(X,\frac19D)\to S$ is in $\CY_1^{\cy}$.
\end{prop}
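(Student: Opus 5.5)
We must show that the central fiber $X_0:=f^{-1}(0)$ has $K_{X_0}$ Cartier, given that every fiber over $S^\circ$ has Cartier canonical divisor. If $K_{X_0}$ is Cartier, then $K_{X/S}$ is Cartier near $X_0$ and there is nothing to prove. So assume $K_{X_0}$ is not Cartier and aim for a contradiction.

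We first split by type using Corollary \ref{Corollary. Only Type 1 and 3}.

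\emph{Type~\uppercase\expandafter{\romannumeral1}.} Here $(X_0,\frac19D_0)$ is klt, so $X_0$ is a klt (hence rational, $\IQ$-Gorenstein) del Pezzo surface. I will use that its local class groups are finite. The index of $K$ along the family is controlled by the reflexive sheaves $\omega_{X/S}^{[m]}$, which are flat and commute with base change by condition (5) of Definition \ref{Definition. family of bpcy}. The idea is to show that $\omega_{X/S}$ is invertible near $X_0$. Since the family is locally stable, the total space $X$ is klt near $X_0$ (by inversion of adjunction over a smooth base), and $\omega_{X/S}$ is invertible on $X\setminus X_0$. The locus where a reflexive rank one sheaf fails to be invertible is closed. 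I would control it as follows: because of base change, $\omega_{X/S}|_{X_0}\cong\omega_{X_0}$. So if $\omega_{X/S}$ were locally free near $X_0$, we would be done. The remaining point is to use that $X_0$ has codimension $2$ in the three-dimensional normal space $X$, together with a depth argument. The sheaf $\omega_{X/S}$ is $S_3$ along $X_0$ by flatness over a regular two-dimensional base with $S_2$ fibers that are CM. A purity result for reflexive rank one sheaves on klt threefolds that are invertible off a codimension-$2$ subset, such as local Picard/class group purity in the style of Grothendieck--Lefschetz for depth $\ge 3$ (Kollár's results on local Picard groups), should then give that $\omega_{X/S}$ is invertible.

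\emph{Type~\uppercase\expandafter{\romannumeral3}.} Here I would use Lemma \ref{Lemma. 2-complement}, which gives $2(K_{X_0}+B)\sim0$, and so bounds the Cartier index of $K_{X_0}$ in the relevant sense. Combined with the family condition $\omega^{[9]}_{X/S}(D_1)\cong\CO_X$, this should show that the local index of $K_{X/S}$ divides $\gcd(2,9)=1$ wherever the deformation is controlled. Concretely, I expect $X_0$ to be Gorenstein slc, e.g.\ of the form like (\ref{Equation. def of p0}). I would then argue via the same local class group purity: the obstruction class of $\omega_{X/S}$ in the local class groups at points of $X_0$ is both $9$-torsion (from $\omega^{[9]}_{X/S}(D_1)$ with $\frac19 D$ Cartier-like) and $2$-torsion on $X_0$, hence trivial.

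The main obstacle is the purity step: extending invertibility of $\omega_{X/S}$ across the codimension-$2$ fiber $X_0$. My first attempt will be to reduce to the local ring at a point $x\in X_0$. I would use condition (5) to get $\omega_{X/S}$ of depth $3$ at $x$, then invoke Kollár's theorem that on a threefold germ where $\IQ$-Cartier divisors are locally trivial in codimension $2$ and $\mathcal{O}$ has depth $\geq3$, the local class group injects into that of the punctured neighborhood restricted to a general hyperplane section through the fiber direction. Alternatively, I would restrict to a general smooth curve $C\subset S$ through $0$ and use the fact that index one is open in flat families over curves, provided $\omega$ commutes with base change.
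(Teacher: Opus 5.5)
Your proposal has a genuine gap. The purity step you flag as the main obstacle is false as stated, and the proposal never uses the feature that makes the statement true: that the base $S$ is two-dimensional.

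First, a dimension correction. $X$ has relative dimension two over the surface $S$, so it is four-dimensional, not three-dimensional. The locus where $\omega_{X/S}$ fails to be invertible is a finite set of closed points of $X_0$, so what you need is purity at an isolated point of a $4$-dimensional klt (or slc) germ. No such purity holds: for $\IA^4/\mu_3$ acting by scalars, the germ is klt and Cohen--Macaulay, and $K$ is Cartier off the origin but not at it. Grothendieck--Lefschetz type results (SGA2, Koll\'ar's local Picard groups) give at best injectivity of restriction maps. That only tells you the local class of $\omega_{X/S}$ restricts to the class of $\omega_{X_0}$, which is nonzero by assumption, so it cannot force the class to vanish.

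Your curve alternative goes the wrong way. Index one is open, not closed, in $\IQ$-Gorenstein families over a curve. A $\IQ$-Gorenstein smoothing of a $\frac14(1,1)$ point (e.g.\ $\IP^2\rightsquigarrow\IP(1,1,4)$) has Gorenstein general fibre and index-two special fibre. So no argument that looks only at one curve through $0$, or only at the local ring of $X$ at $x_0$, can succeed.

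The paper splits according to whether the non-Cartier point $x_0$ is a $0$-dimensional lc center of $(X_0,0)$, not according to type.
\begin{itemize}
\item If $x_0$ is not an lc center, \cite[Theorem 3.5]{BL24} produces a curve through $x_0$, not contained in $X_0$, along which the local index of $K$ of the fibres is constant. This contradicts index one over $S^\circ$, and it is exactly where the two-dimensionality of $S$ is used. This case covers all of your Type I case, since a klt $X_0$ has no lc centers. It also covers the non-lc-center points of Type III fibres, where you have no valid argument.
\item If $x_0$ is an lc center, your Type III torsion idea is essentially right, but it is missing the key observation. A boundary with positive coefficients cannot pass through a $0$-dimensional lc center of an slc pair. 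Hence $x_0\notin\Supp(D_0)$ and $x_0\notin\Supp(B_0)$, where $B_0$ comes from Lemma \ref{Lemma. 2-complement}.
\end{itemize}
In the lc-center case it follows that $9K_{X_0}$ and $2K_{X_0}$ are both Cartier at $x_0$, so the index divides $\gcd(9,2)=1$. Without the observation, $9K_{X_0}+D_0\sim0$ and $2(K_{X_0}+B_0)\sim0$ say nothing about the index at points where $D_0$ or $B_0$ fails to be Cartier. This step is a statement purely about $X_0$; no class group of the total space is involved.
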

\begin{proof}
    Assume that $K_{X_0}$ has Cartier index $r>1$ at a closed point $x_0$. If $x_0$ is not an lc center of $(X_0,0)$, then  \cite[Theorem 3.5]{BL24} gives a curve $x_0\in C\subset X$ not contained in $X_0$ with
    \[
        \text{ind}_{x_0}(K_{X_0})=\text{ind}_x(K_{X_s})
    \]
    for all $x\in C$ and $s=f(x)$. This contradicts the assumption that $r>1$. Thus $x_0$ is an lc center of $(X_0,0)$. Since $(X_0,\frac19D_0)$ is slc, $x_0\notin\Supp(D_0)$. Thus $9K_{X_0}$ is Cartier at $x_0$ by $9K_{X_0}+D_0\sim 0$. 

    The 0-dimensional lc center $x_0$ implies that $(X_0,\frac19D_0)$ is non-klt, hence of Type \uppercase\expandafter{\romannumeral3} from Corollary \ref{Corollary. Only Type 1 and 3}. Then Lemma \ref{Lemma. 2-complement} yields an slc pair $(X_0,B_0)$ with $2(K_{X_0}+B_0)\sim 0$. Hence $x_0\notin\Supp(B_0)$ and $2K_{X_0}$ is Cartier at $x_0$. Thus $r|9$ and $r|2$, which also contradicts the assumption that $r>1$. Therefore $(X,\frac19D)\to S$ is in $\CY_1^{\cy}$.
\end{proof}
\begin{thm}
\label{Theorem. Existence of good moduli}
    The index one CY-moduli stack of unmarked cubic surfaces $\CY_1^{\cy}$ admits a good moduli space morphism
    \[
    \CY_1^{\cy}\longrightarrow Y_1^{\cy},
    \]
    where $Y_1^{\cy}$ is a separated algebraic space of finite type.
\end{thm}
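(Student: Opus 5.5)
We plan to apply Theorem \ref{Thereom. S Theta criterion} to $\CY_1^{\cy}$. Three inputs are needed: $\CY_1^{\cy}$ is of finite type with affine diagonal; it is S-complete; and it is $\Theta$-reductive, both with respect to DVRs essentially of finite type over $\Ik$.

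\emph{Finite type and affine diagonal.} By (\ref{Equation. construct Y1cy}), $\CY_1^{\cy}\to\CM$ is a composition of an open immersion, closed immersions and seminormalizations. The latter are finite, hence affine, so $\CY_1^{\cy}$ inherits affine diagonal from $\CM$. For finite type, note that $\CY^{\cy}$ is the closure of the image of the finite type stack $\CY$. We would argue that it is quasi-compact, using boundedness of the closure of a bounded family in $\CM$, as in \cite[Section 7.1]{BL24}. The open substack $\CY_1^{\cy}$ is then quasi-compact because $\CY^{\cy}$ is Noetherian.

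\emph{Valuative properties.} Take a morphism $\overline{\mathrm{ST}}_R\backslash\{0\}\to\CY_1^{\cy}$, or $\Theta_R\backslash\{0\}\to\CY_1^{\cy}$. By Theorem \ref{Theorem. bpCY stack}, $\CM$ is S-complete and $\Theta$-reductive, so the composite to $\CM$ extends uniquely to $\overline{\mathrm{ST}}_R\to\CM$, resp. $\Theta_R\to\CM$. Uniqueness in $\CY_1^{\cy}$ follows from uniqueness in $\CM$, since $\CY_1^{\cy}\to\CM$ is representable and separated (a locally closed immersion composed with finite maps). It remains to show that the extension factors through $\CY_1^{\cy}$.

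\emph{Factoring through $\CY^{\cy}$.} Both $\overline{\mathrm{ST}}_R$ and $\Theta_R$ have smooth atlases $\Spec R[s,t]/(st-\pi)$ and $\Spec R[t]$. These are regular surfaces, hence normal and reduced, and the complement of the closed point is dense. The extension therefore lands in the closed substack underlying $\CY^{\cy}$. Since the source is normal, hence seminormal, the morphism lifts uniquely through the seminormalization $\CY^{\cy}\to\overline{\CY}$. We would check this also for the intermediate seminormalization of $\CD\CP_{3,9}^{\cy}$, using the universal property of seminormalization for seminormal sources.

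\emph{Index one.} The atlas is a smooth surface essentially of finite type over $\Ik$. The family pulled back to it lies in $\CY^{\cy}$ and lies in $\CY_1^{\cy}$ away from the closed point $0$. Proposition \ref{Proposition. Extension on surfaces} then shows the family lies in $\CY_1^{\cy}$. For $\overline{\mathrm{ST}}_R$, the scheme $\Spec R[s,t]/(st-\pi)$ is regular of dimension two, so it is smooth over $\Ik$ (characteristic 0, essentially of finite type). Being a Cartier condition, "$K_{X}$ Cartier" is open and $\IG_m$-invariant, so it descends to the quotient stack. Theorem \ref{Thereom. S Theta criterion} then gives the good moduli space, separated and of finite type.

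The main obstacle is the index-one step. The whole difficulty is that $\CY^{\cy}$ contains higher-index members, and Proposition \ref{Proposition. Extension on surfaces} is exactly what handles this. I also expect the finite type claim, that the closure has bounded members, to need a short citation rather than a real argument.
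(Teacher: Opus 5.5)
\textbf{Where your proposal matches the paper.} Your treatment of S-completeness and $\Theta$-reductivity is the paper's argument. You extend in $\CM$ by Theorem \ref{Theorem. bpCY stack}. You factor through $\CY^{\cy}$ using that the atlas of $\overline{\mathrm{ST}}_R$ or $\Theta_R$ is a smooth (hence seminormal) surface in which the punctured part is dense. You then apply Proposition \ref{Proposition. Extension on surfaces} to land in $\CY_1^{\cy}$.

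\textbf{The gap: the finite type step.} You propose to show that $\CY^{\cy}$ is quasi-compact because it is the closure of the finite type stack $\CY$, and then pass to the open substack $\CY_1^{\cy}$. No such principle is available. $\CM$ is only locally of finite type and is non-separated, and in such a stack the closure of a quasi-compact substack need not be quasi-compact. The affine line with infinitely many origins already shows this, and it even has affine diagonal. For moduli of boundary polarized CY pairs this is not a pathology but the known phenomenon: limits of a bounded family can acquire surfaces of unbounded Cartier index. That is why \cite{BL24} only produces an asymptotically good moduli space for $\CY^{\cy}$. Note also that if $\CY^{\cy}$ were of finite type, your own valuative argument, minus the index-one step, would give a good moduli space for all of $\CY^{\cy}$, and there would be no reason to pass to $\CY_1^{\cy}$ at all.

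\textbf{The fix.} Bound first and close afterwards, which is what the paper does. The locus $\CM_1\subset\CM$ where $K_X$ is Cartier is of finite type with affine diagonal by \cite[Proposition 3.3]{BL24}. Concretely, as in the proof of Proposition \ref{Proposition. Unique closed pt}, for such pairs $-K_X$ is very ample and embeds $X$ as a cubic surface in $\IP^3$ with $D\in|\CO_X(9)|$. Since $\CY_1^{\cy}=\CY^{\cy}\times_{\CM}\CM_1$, and $\CY^{\cy}\to\CM$ is a composite of closed immersions and seminormalizations (all finite), $\CY_1^{\cy}$ is finite over $\CM_1$. Hence it is of finite type with affine diagonal.

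So the index-one condition does double duty. It supplies boundedness, and Proposition \ref{Proposition. Extension on surfaces} shows it is preserved by the extensions over $\overline{\mathrm{ST}}_R$ and $\Theta_R$. Your proposal only uses it for the second.
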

\begin{proof}
    Let $\CM_1\subset\CM:=\CM(\chi,9,\frac19)$ be the substack parametrizing pairs $(X,\frac19D)$ with $K_X$ Cartier. By \cite[Proposition 3.3]{BL24}, $\CM_1$ is a finite type algebraic stack with affine diagonal. Note that for a finite type algebraic stack with affine diagonal, its arbitrary closed substack and seminormalization are both finite type algebraic stacks with affine diagonal. Hence it follows from Construction \ref{Construction. moduli Ycy} that $\CY_1^{\cy}$ is a finite type algebraic stack with affine diagonal. Thus it remains to verify S-completeness and $\Theta$-reductivity of $\CY_1^{\cy}$.

    Let $R$ be a DVR essentially of finite type over $\Ik$ with uniformizer $\pi$. Set
    \[
        S:=\Spec(R[s,t]/(st-\pi))\quad\text{or}\quad S:=\Spec(R[t]),
    \]
    where $\IG_m$ acts on $S$ with weight 1 and -1 on $s$ and $t$ respectively. Let $0\in S$ denote the unique closed fixed point and $S^{\circ}=S\backslash\{0\}$. Thus $[S/\IG_m]$ equals $\overline{\text{ST}}_R$ or $\Theta_R$. 

    Let $(X^{\circ},D^{\circ})\to S^{\circ}$ be a $\IG_m$-equivariant family in $\CY_1^{\cy}$. By Theorem \ref{Theorem. bpCY stack}, the family extends uniquely to a $\IG_m$-equivariant family $(X,D)\to S$ in $\CM$. Since $S$ is smooth, Construction \ref{Construction. moduli Ycy} implies that the morphism $S\to\CM$ factors through $\CY^{\cy}\to\CM$. Thus $(X,D)\to S$ is in $\CY^{\cy}$. Hence by Proposition \ref{Proposition. Extension on surfaces}, $(X,D)\to S$ is in $\CY_1^{\cy}$. Therefore $\CY_1^{\cy}$ is S-complete and $\Theta$-reductive with respect to DVRs essentially of finite type over $\Ik$. Then the assertion follows from Theorem \ref{Thereom. S Theta criterion}. 
    \end{proof}
For the existence of good moduli spaces of $\CY^{\mathrm{K}}$ and $\CY^{\ksba}$, we have the following results. 
\begin{prop}
\label{Proposition. good moduli of K and KSBA}
    \leavevmode
    \begin{enumerate}
        \item The stacks $\CY^{\mathrm{K}}$ and $\CY^{\ksba}$ are both finite type open substacks of $\CY_1^{\cy}$;
        \item There exists a good moduli space morphism $\phi_{\mathrm{K}}:\CY^{\mathrm{K}}\to Y^{\mathrm{K}}$ to a projective scheme;
        \item There exists a coarse moduli space morphism $\phi_{\ksba}:\CY^{\ksba}\to Y^{\ksba}$ to a projective scheme.
    \end{enumerate}
\end{prop}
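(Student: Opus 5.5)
For (1), I will first show that $\CY^{\mathrm{K}}$ and $\CY^{\ksba}$ are open in $\CY^{\cy}$; by the inclusions recorded after Construction \ref{Construction. moduli Ycy}, they are then open in $\CY_1^{\cy}$. For $\CY^{\mathrm{K}}$, openness of K-semistability in families of log Fano pairs \cite{BLXZ25, LXZ22} applies to the pairs $(X,\frac{1-\vep}{9}D)$. One point needs checking: a single $\vep$ must work uniformly. I plan to get this from the finiteness of walls for K-moduli of cubic surfaces with weighted lines, namely that there are no walls for $c<1/9$ \cite[Section 3.1]{Zhao24}. For $\CY^{\ksba}$, the condition that $(X,\frac{1+\vep}{9}D)$ is slc with $K_X+\frac{1+\vep}{9}D$ ample is open. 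Indeed, $K_X+\frac{1+\vep}{9}D\sim_{\IQ}\frac{\vep}{9}D$ is automatically ample because $D$ is ample. So the only condition to impose is that $(X,\frac{1+\vep}{9}D)$ is slc, and slc-ness is open in families over a seminormal base with $\IQ$-Cartier relative log canonical divisor. Again I must fix a uniform $\vep$. I will obtain it from boundedness: $\CY^{\cy}$ restricted to the bounded stack $\CM_1$, together with constructibility of log canonical thresholds, gives finitely many values. Finite type then follows because both stacks are open in the finite type stack $\CY_1^{\cy}$ from Theorem \ref{Theorem. Existence of good moduli}.

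For (2), I will identify $\CY^{\mathrm{K}}$ with the K-moduli stack of pairs $(X,\frac{1-\vep}{9}D)$, where $D$ ranges over the closure of the 27-line divisors. This is an open and closed substack (after seminormalization) of the K-moduli stack of log Fano pairs. The K-moduli theorem \cite{LXZ22, BLXZ25} then gives a good moduli space that is a projective scheme, namely the GIT moduli space by \cite{Zhao24}. The compatibility I have to verify is that the families in Definition \ref{Definition. family of bpcy}, with index one, coincide with the K-moduli families. Since $K_X$ is Cartier and $D\sim -9K_X$, the relevant reflexive powers are line bundles, and their flatness and base-change properties agree.

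For (3), the pairs $(X,\frac{1+\vep}{9}D)$ are stable pairs, so their automorphism groups are finite. $\CY^{\ksba}$ is then identified with a seminormalized closed substack of the KSBA moduli stack of stable pairs with fixed volume and coefficient. That stack is proper Deligne–Mumford with projective coarse moduli space by \cite{Kollar23}, and the Keel–Mori theorem supplies $\phi_{\ksba}$. Projectivity of $Y^{\ksba}$ comes from Kollár's projectivity of KSBA moduli, applied to a closed substack. Alternatively, it follows because the coarse space is the quotient of Naruki's compactification by $W(E_6)$ \cite{GKS21}.

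The main obstacle I expect is the uniformity of $\vep$ in (1), and more generally matching the CY-family structure of Definition \ref{Definition. family of bpcy} with the K-flat and KSBA family notions. I would first reduce to finitely many strata using boundedness and the classification in Proposition \ref{Proposition. K S-equiv CY}, and then invoke \cite[Section 7]{BL24}, where Blum–Liu already treat $\CY^{\mathrm{K}}$ and $\CY^{\ksba}$ as open substacks.
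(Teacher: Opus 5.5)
Your outline is correct and follows the same route as the paper, whose proof just says that the arguments of \cite[Theorems 2.37 and 2.39]{BL24} carry over. Concretely, that means openness with a uniform $\vep$, then identifying $\CY^{\mathrm{K}}$ and $\CY^{\ksba}$ with seminormalized closed substacks of the K-moduli and KSBA stacks, which supplies the projective good and coarse moduli spaces.

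One minor slip does not affect the argument: $\CY^{\mathrm{K}}$ is closed but not open in the K-moduli stack of all log Fano pairs with these invariants, since the 27-line locus is a proper closed subset of the pairs with $D\in|-9K_X|$. Closedness is all you actually use.
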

\begin{proof}
    The proof is the same as \cite[Theorem 2.37 and Theorem 2.39]{BL24}.
\end{proof}
\begin{rmk}
    Since $\CY^{\ksba}$ is defined over a field of characteristic 0 and has finite stabilizers \cite[Proposition 8.64]{Kollar23}, it has linearly reductive stabilizers. Hence $\CY^{\ksba}$ is tame by \cite[Theorem 3.2]{AOV08}, and then $\phi_{\ksba}$ is a good moduli space morphism. 
\end{rmk}
\begin{rmk}
     \label{Remark. Comparing two K/KSBA}
     The compactifications $Y^{\mathrm{K}}$ and $Y^{\ksba}$ have been studied previously. On the K-moduli side, $Y^{\mathrm{K}}$ recovers the K-moduli space $\overline{M}^{\mathrm{K}}_{3,(1-\vep)/9}$ for $0<\vep\ll1$ studied in \cite{Zhao24}, where it was proved that there are no K-moduli walls. Consequently, $Y^{\mathrm{K}}$ is isomorphic to the GIT moduli space of cubic surfaces.
 
     On the KSBA-moduli side, let $N$ denote Naruki's compactification of the moduli space of marked cubic surfaces \cite{Naruki82}. The corresponding unmarked moduli space is obtained as the quotient of $N$ by the action of the Weyl group $W(E_6)$, which permutes the marking of the lines. This quotient is recovered by $Y^{\ksba}$. Naruki's compactification $N$ has recently been studied from the KSBA perspective in \cite{GKS21} and \cite{FSW25}, and explicit KSBA-moduli wall crossings for marked cubic surfaces were studied in \cite{Schock24}. 
\end{rmk}
The universal property of good moduli spaces gives unique morphisms
\[
    \pi_{\mathrm{K}}:Y^{\mathrm{K}}\longrightarrow Y_1^{\cy}\quad\text{and}\quad \pi_{\ksba}:Y^{\ksba}\longrightarrow Y_1^{\cy}
\]
making the following diagram commutative:
\begin{equation}
\label{Equation. wall crossing diagram}
\begin{tikzcd}
	{\CY^{\mathrm{K}}} & {\CY_1^{\cy}} & {\CY^{\ksba}} \\
	{Y^{\mathrm{K}}} & {Y_1^{\cy}} & {Y^{\ksba}}
	\arrow[hook, from=1-1, to=1-2]
	\arrow["{\phi_{\mathrm{K}}}", from=1-1, to=2-1]
	\arrow["{\phi_{\mathrm{\cy}}}", from=1-2, to=2-2]
	\arrow[hook', from=1-3, to=1-2]
	\arrow["{\phi_{\mathrm{\ksba}}}", from=1-3, to=2-3]
	\arrow["{\pi_{\mathrm{K}}}", from=2-1, to=2-2]
	\arrow["{\pi_{\ksba}}"', from=2-3, to=2-2]
\end{tikzcd}
\end{equation}

In Section \ref{Section. sec 5}, we will describe the morphisms $\pi_{\mathrm{K}}$ and $\pi_{\ksba}$ explicitly.
\subsection{Classification of closed points}
We now classify the closed points of $\CY_1^{\cy}(\Ik)$.
\begin{lem}
    \label{Lemma. Automorphism group}
     For the pair $(X_0,\frac19 D_0)$ in (\ref{Equation. def of p0}), we have
    \[
        \Aut(X_0,\frac19 D_0):=\{g\in\Aut(X_0)|g^*D_0=D_0\}\cong (\IG_m)^3\rtimes S_3.
    \]
\end{lem}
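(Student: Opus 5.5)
We compute $\Aut(X_0)$ first and then impose that $D_0$ is preserved. Write $X_0=P_1\cup P_2\cup P_3$ with $P_i=\IV(x_i)\subset\IP^3$. Since $H_0=\IV(x_0)|_{X_0}$ satisfies $\CO_{X_0}(H_0)\cong\CO_{X_0}(1)\cong\omega_{X_0}^{-1}$, every automorphism $g$ of $X_0$ preserves $\omega_{X_0}^{-1}$. The embedding $X_0\subset\IP^3$ is given by the complete linear system $|-K_{X_0}|$: from $0\to\CO_{\IP^3}(-2)\to\CO_{\IP^3}(1)\to\CO_{X_0}(1)\to0$ we get $h^0(\CO_{X_0}(1))=4$. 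Hence every automorphism of $X_0$ is induced by a unique element of $PGL_4$ preserving the cubic $x_1x_2x_3$ up to scalar. The same argument shows $\Aut(X_0,\frac19D_0)$ is the subgroup of $PGL_4$ stabilizing both $\IV(x_1x_2x_3)$ and the divisor $9H_0$, i.e. the hyperplane $\IV(x_0)$. One small point is that $g^*D_0=D_0$ as a Mumford divisor means $g$ preserves $H_0=\mathrm{Supp}(D_0)$. Because $H_0$ is the reduced hyperplane section $\IV(x_0)\cap X_0$, a triangle of lines spanning the plane $\IV(x_0)$, preserving $H_0$ forces $g$ to preserve that plane.

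Next we determine this subgroup. An element $g\in PGL_4$ preserving $X_0$ permutes its irreducible components, so it permutes the planes $P_1,P_2,P_3$. Composing with a coordinate permutation of $x_1,x_2,x_3$, which clearly preserves both $X_0$ and $D_0$, we reduce to the case where $g$ fixes each $P_i$. Then $g^*x_i=\lambda_ix_i$ for $i=1,2,3$, and $g^*x_0$ must be a multiple of $x_0$ because the plane $\IV(x_0)$ is preserved. So $g$ is diagonal, $g=\mathrm{diag}(\lambda_0,\lambda_1,\lambda_2,\lambda_3)$ modulo scalars. Conversely, every such diagonal element preserves both $X_0$ and $\IV(x_0)$. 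The diagonal torus of $PGL_4$ is $(\IG_m)^3$; we normalize $\lambda_0=1$. It is normal in the group we have found, and the permutations of $x_1,x_2,x_3$ give a complement isomorphic to $S_3$. This yields $(\IG_m)^3\rtimes S_3$.

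The main obstacle is justifying that $\Aut(X_0)$ is linear, i.e. that any abstract automorphism of the non-normal surface $X_0$ extends to $\IP^3$. The approach above handles this by noting that $\omega_{X_0}$ is intrinsic, being an invertible sheaf since $X_0$ is Gorenstein. We also need that $\CO_{X_0}(1)$ is the unique $g$-invariant choice, which holds because $\omega_{X_0}^{-1}=\CO_{X_0}(1)$, and that the restriction map $H^0(\IP^3,\CO(1))\to H^0(X_0,\CO_{X_0}(1))$ is an isomorphism, which follows from $H^0(\CO_{\IP^3}(-2))=H^1(\CO_{\IP^3}(-2))=0$. Alternatively, one could argue via normalization: an automorphism lifts to the three copies of $(\IP^2,(abc=0))$, each with automorphism group $(\IG_m)^2\rtimes S_3$. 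Compatibility with the gluing along the three double lines, and preservation of the triangle $H_0$, then cut the group down to $(\IG_m)^3\rtimes S_3$. The linear-algebra route is cleaner, so we will use it.
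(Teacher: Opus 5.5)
Your proposal is correct and follows essentially the same route as the paper: you make $\Aut(X_0)$ linear through the anticanonical embedding $X_0\subset\IP^3$, reduce to automorphisms fixing each plane $\IV(x_i)$, and then use preservation of $D_0$ to kill the term $x_0\mapsto x_0+l(x_1,x_2,x_3)$, leaving the diagonal torus extended by $S_3$. You phrase that last step as preservation of the plane $\IV(x_0)$ spanned by the support of $D_0$, and your check that $H^0(\IP^3,\CO(1))\to H^0(X_0,\CO_{X_0}(1))$ is an isomorphism simply fills in a detail the paper leaves implicit.
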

\begin{proof}
    Since the anticanonical linear system $|-K_{X_0}|$ embeds $X_0$ into $\IP^3$, the automorphisms of $X_0$ are induced by automorphisms of $\IP^3$. After fixing the order of the three plane components of $X_0$, a projective transformation preserving $X_0$ has the form
    \[
        [x_0:x_1:x_2:x_3]\mapsto [\lambda_0x_0+l(x_1,x_2,x_3):\lambda_1x_1:\lambda_2x_2:\lambda_3x_3],
    \]
    where $\lambda_i\in \Ik^{\times}$ and $l(x_1,x_2,x_3)$ is a linear form in $x_1,x_2$ and $x_3$. Since it preserves $D_0$, $l(x_1,x_2,x_3)=0$. Modulo a common scalar, the connected component is $(\IG_m)^3$. Moreover, $S_3$ acts on $(X_0,\frac19D_0)$ by permuting the three irreducible components of $X_0$, hence $\Aut(X_0,\frac19 D_0)\cong (\IG_m)^3\rtimes S_3.$ 
\end{proof}
The point $p_0=[(X_0,\frac19D_0)]$ lies in neither $\CY^{\rm{K}}(\Ik)$ nor $\CY^{\ksba}(\Ik)$. It is not in $\CY^{\rm{K}}(\Ik)$ because a K-semistable log Fano pair is klt, in particular  normal, by \cite{Odaka13}. It is not in $\CY^{\ksba}(\Ik)$ because the coefficient of $\frac{1+\vep}{9}D_0$ is larger than 1. 
\begin{prop}
\label{Proposition. Unique closed pt}
    The point $p_0$ is closed in $\CY_1^{\cy}(\Ik)$. Moreover, for a closed point $x\in\CY_1^{\cy}(\Ik)$, we have the following classification:
    \begin{enumerate}
        \item If $x\in \CY^{\rm{K}}(\Ik)\cup\CY^{\ksba}(\Ik)$, then $x=[(X,\frac19\sum_iL_i)]$ where $X$ is a GIT-stable cubic surface and $L_i$'s are the 27 lines on it.
        \item If $x\notin\CY^{\rm{K}}(\Ik)\cup\CY^{\ksba}(\Ik)$, then $x=p_0$.
    \end{enumerate}
\end{prop}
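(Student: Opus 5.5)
Closedness of $p_0$ comes first. The orbit closure of any point of an algebraic stack with a good moduli space contains a unique closed point, and every point specializing to $p_0$ is S-equivalent to it. To show $p_0$ is closed, I would take a closed point $q$ in the closure of $\{p_0\}$ inside $\CY_1^{\cy}$. Such a specialization is realized by a $\Theta$-type degeneration, i.e. a test configuration $(X_0,\frac19D_0)\rightsquigarrow(X',\frac19D')$ induced by a one-parameter subgroup $\lambda$. Since $\CY_1^{\cy}$ is a closed-then-seminormalized substack of $\CM_1$, it suffices to show that every weakly special degeneration of $(X_0,\frac19D_0)$ is a product.

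Because $-K_{X_0}$ is very ample with $H^0(-K)$ of dimension 4 and $K$ is Cartier along the family, the test configuration embeds $\IG_m$-equivariantly in $\IP^3\times\IA^1$. The degeneration is then the flat limit of $\lambda(t)\cdot(x_1x_2x_3=0, x_0^9)$ for a diagonalized $\lambda$ with weights $w_0,\dots,w_3$. The limit of $(x_1x_2x_3)$ changes only if the coordinates are not adapted; in general coordinates the cubic and the ninefold hyperplane $x_0^9$ must both degenerate compatibly. Since the boundary is $9H$ with $H$ a hyperplane section, the limit hyperplane must still avoid all strata in the slc sense, and the limit cubic must stay a CY-type surface. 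A short weight computation shows that any nontrivial degeneration either produces a non-slc pair (a hyperplane through a vertex, or a non-reduced cubic) or is induced by $\Aut(X_0,\frac19D_0)$. Equivalently, I can argue that $(X_0,\frac19D_0)$ is S-equivalent only to pairs listed in Proposition \ref{Proposition. K S-equiv CY}(2). Its degenerations are Type III by Remark \ref{Remark. property of S-equiv}(2), and a Type III toric-union degeneration of a union of three planes with $H$ transverse is itself. Hence $p_0$ is closed.

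For the classification, let $x$ be a closed point. By Proposition \ref{Proposition. K S-equiv CY}, $x$ is S-equivalent either to $p_0$ or to a pair $(X,\frac19\sum L_i)$ with $X$ having at worst $A_1$ singularities. Closed points are the unique closed representatives of S-equivalence classes, since S-equivalence of closed points in a stack with a good moduli space forces equality. So in the first case $x=p_0$, and $p_0\notin\CY^{\rm K}\cup\CY^{\ksba}$.

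In the second case the pair is klt (Corollary \ref{Corollary. Only Type 1 and 3}) and lies in $\CY^{\rm K}$ by Zhao's description. The K-polystable points there are either GIT-stable cubics with their lines, or the $3A_2$ cubic $x_1x_2x_3=x_0^3$. The latter degenerates to $p_0$ via (\ref{Equation. Kps point to 3planes}), so it is not closed in $\CY_1^{\cy}$. A K-polystable pair $(X,\frac19\sum L_i)$ with $X$ GIT-stable (at most $A_1$) has finite automorphism group and is klt. Every weakly special degeneration of a klt CY pair with finite automorphisms that stays in the stack must be trivial, since a nontrivial one would give a $\IG_m$ in the stabilizer of the closed limit, which is again Type I. Among the Type I closed pairs, I would check that the limit is K-polystable for weight $1-\vep$, hence isomorphic by uniqueness of polystable degenerations \cite{LWX21}. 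Thus $x$ is as in (1).

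The main obstacle is the closedness of $p_0$, i.e. ruling out nontrivial degenerations of $(X_0,\frac19D_0)$. I would first attempt the explicit one-parameter-subgroup computation in $\IP^3$.
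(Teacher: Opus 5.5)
Your outline has the same architecture as the paper's proof: rule out nontrivial weakly special degenerations of $p_0$, show the pairs of Proposition~\ref{Proposition. K S-equiv CY}(1) are closed, and conclude from Proposition~\ref{Proposition. K S-equiv CY} together with uniqueness of the closed point in an S-equivalence class. However, the one step with real content, the closedness of $p_0$, is only asserted. The ``short weight computation'' is never carried out.

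Your fallback argument is circular. Proposition~\ref{Proposition. K S-equiv CY} does not say that $p_0$ is S-equivalent only to itself; that is essentially what you are trying to prove. The claim about Type III toric-union degenerations of three planes is also unsupported.

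The missing idea is the one the paper uses. For every point of $\CY_1^{\cy}$, $\Aut^0$ is a torus \cite[Theorem 6.5]{ABB23}. Since $|-K_X|$ embeds $X$ in $\IP^3$, this torus lies in $\mathrm{PGL}_4$ and so has rank at most $3$. A weakly special degeneration with non-isomorphic central fiber strictly raises $\dim\Aut$ \cite[Proposition 6.6]{ABB23}. That is impossible here, because $\dim\Aut(X_0,\frac19D_0)=3$ already. Then \cite[Proposition 6.13]{ABB23} gives closedness.

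Your explicit route can be completed, but it needs more than your parenthetical list:
\begin{enumerate}
\item The initial form of a product is the product of initial forms, so the limit cubic is a product of three linear forms.
\item Slc then forces three planes in general position. You must also exclude three distinct planes through a common line, which is reduced but not slc.
\item Since $\CO_{\CX}(\CD)\cong\CO_{\CX}(9)$, the limit boundary is $9$ times the limit plane. This plane cannot be one of the three components, since otherwise $\CD$ would contain a component of the central fiber.
\item Slc forces the limit plane to miss the triple point. The automorphisms $x_0\mapsto\lambda_0x_0+l(x_1,x_2,x_3)$ from Lemma~\ref{Lemma. Automorphism group} then move it to $(x_0=0)$.
\end{enumerate}

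For the pairs in Proposition~\ref{Proposition. K S-equiv CY}(1), your argument tacitly needs that a Type I (klt) boundary polarized CY pair admits no nontrivial $\IG_m$-action. Such an action gives a product weakly special test configuration whose associated divisorial valuation is an lc place, contradicting klt. This is the input the paper takes from \cite[Theorem 6.15]{ABB23}. With it stated explicitly, your argument for these points is fine.

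The final appeal to \cite{LWX21} presupposes that the limit of the degeneration lies in $\CY^{\mathrm{K}}$. That is not automatic: $\CY^{\mathrm{K}}$ is open, so a specialization of one of its points can leave it. The step is unnecessary and should be dropped.
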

\begin{proof}
    We first prove that $p_0$ is a closed point in $\CY_1^{\cy}(\Ik)$. For any $[(X,\frac19 D)]\in \CY_1^{\cy}(\Ik)$, the identity component $\Aut^0(X,\frac19D)\cong(\IG_m)^r$ for some $r\geq0$ by \cite[Theorem 6.5]{ABB23}. Since $\chi(X,-mK_X)=\frac32(m^2+m)+1$, we have $(-K_X)^2=3$. Applying \cite[Proposition 3.5]{ABB23}, we obtain that $H^1(X,-mK_X)=0$ for $m\in\IN$. Thus it follows from \cite{Fujita90} and \cite[Corollary 4.10]{Reid94} that $-K_X$ is very ample and the linear system $|-K_X|$ induces an embedding $X\hookrightarrow\IP^3$. Hence $\Aut^0(X,\frac19D)$ is a subgroup of $\rm{PGL}_4$, which implies $r\leq3$. 

    Let $(\CX,\frac19\CD)\to\IA^1$ be a weakly special degeneration of $(X_0,\frac19D_0)$. If $(X_0,\frac19D_0)\ncong(\CX_0,\frac19\CD_0)$, then \cite[Proposition 6.6]{ABB23} implies that
    \[
        \dim\Aut(\CX_0,\frac19\CD_0)>\dim\Aut(X_0,\frac19D_0)=3,
    \]
 contradicting the previous bound $r\leq 3$. Hence $(X_0,\frac19D_0)\cong(\CX_0,\frac19\CD_0)$. Then it follows from \cite[Proposition 6.13]{ABB23} that $p_0$ is closed. By \cite[Theorem 6.15]{ABB23}, the points $[(X,\frac19\sum_i L_i)]$ in (1) are all closed. Hence the classification follows from  Proposition \ref{Proposition. K S-equiv CY}.
\end{proof}

\section{A local quotient presentation}
\label{Section. sec 4}
In this section, we construct a local quotient presentation of $\CY_1^{\cy}$ at $p_0$, which will serve as the local model for the wall crossing studied in Section \ref{Section. sec 5}. 
\subsection{Construction of an explicit family}
 We construct a seven-dimensional family of boundary polarized CY pairs in $\CY_1^{\cy}$. Set
\[
    U=\Spec\Ik[t,P_1,Q_1,P_2,Q_2,P_3,Q_3]\cong\IA^7,
\]
and consider a family of cubic surfaces $f:\CX\to U$ defined by 
\begin{equation}
\label{Equation. Family on U}
\CX:=(x_1x_2x_3=tP)\subset\IP^3_{[x_0:x_1:x_2:x_3]}\times U,\quad P=x_0^3+\sum_{i=1}^3(P_ix_0x_i^2+Q_ix_i^3).
\end{equation}
By Lemma \ref{Lemma. Automorphism group}, set 
 \[
     G=(\IG_m)^3\rtimes S_3\cong\Aut(X_0,\frac19 D_0).
 \]
 For $(\lambda_1,\lambda_2,\lambda_3)\in (\IG_m)^3$, we define its action on $\IP^3$ and $U$ by
 \[
     [x_0:x_1:x_2:x_3]\mapsto[x_0:\lambda_1x_1:\lambda_2x_2:\lambda_3x_3],
 \]
 \[
 t\mapsto\lambda_1\lambda_2\lambda_3t,\quad P_i\mapsto\lambda_i^{-2}P_i,\quad Q_i\mapsto\lambda_i^{-3}Q_i,\quad i=1,2,3.
 \]
 The symmetric group $S_3$ acts on $\IP^3\times U$ by permuting the triples $(x_1,P_1,Q_1),(x_2,P_2,Q_2)$ and $(x_3,P_3,Q_3)$ simultaneously. Then the family $f:\CX\to U$ is flat and $G$-equivariant.
 
 Let $U^{\rm{sm}}\subset U$ be the open locus with smooth fibers. Consider the relative Fano scheme of lines  
 \begin{equation}
     \label{Equation. Fano of lines}
     F_1(\CX^{\rm{sm}}/U^{\rm{sm}})\longrightarrow U^{\rm{sm}}.
 \end{equation}
  The morphism is finite \'{e}tale of degree 27. Indeed, let $L$ be a line on a smooth cubic surface $S$. Since $N_{L/S}\cong\CO_{\IP^1}(-1)$, we have $H^0(L,N_{L/S})=H^1(L,N_{L/S})=0$. By the deformation theory of the relative Hilbert scheme, (\ref{Equation. Fano of lines}) is \'{e}tale. It is also projective and quasi-finite, hence finite, with degree 27. We denote its scheme-theoretic closure by
  \[
      \overline{F}:=\overline{F_1(\CX^{\mathrm{sm}}/U^{\mathrm{sm}})}\subset \Gr(2,4)\times U.
  \]
For a geometric point $u\in \IV(t)\subset U$, the fiber is $\CX_u\cong X_0$. Denote the irreducible components and double curves of $X_0$ by
\[
    H_i:=\{x_i=0\},\quad \D_i:=\{x_j=x_k=0\},\quad \forall\{i,j,k\}=\{1,2,3\}.
\]
We use the homogeneous coordinates $[x_0:x_i]$ on $\D_i$. Set
\[
    q_{u,i}=x_0^3+P_i(u)x_0x_i^2+Q_i(u)x_i^3\in\Ik[x_0,x_i],\quad Z_{i}(u)=\IV(q_{u,i})\subset\D_i,
\]
where $P_i(u)$ and $Q_i(u)$ are the corresponding coordinate components of $u$. Each $Z_i(u)$ has length 3 and avoids $p=[1:0:0:0]$. Fix $\{i,j,k\}=\{1,2,3\}$. A point $(\lambda,\mu)\in Z_i(u)\times Z_j(u)$ determines the line
\[
L_{ij}(\lambda,\mu)=\IV(x_k,x_0-\lambda x_i-\mu x_j)\subset H_k
\]
joining $[\lambda:1]\in\D_i$ and $[\mu:1]\in\D_j$. This yields a degree 9 divisor
\[
    J_{ij}(u):=\sum_{\lambda,\mu}\mult_{\lambda}(Z_i(u))\mult_{\mu}(Z_j(u))L_{ij}(\lambda,\mu)\subset H_k,
\]
where $(\lambda,\mu)$ ranges over all points in $Z_i(u)\times Z_j(u)$.
\begin{lem}
\label{Lemma. barF deg 27}
    After a $G$-invariant shrinking of $U$ around $\IV(t)$, the morphism $\overline{F}\to U$ is finite flat of degree 27. For every geometric point $u\in\IV(t)$, there is a scheme-theoretic decomposition
    \begin{equation}
    \label{Equation. decompose Fu}
        \oF_u\cong(Z_1(u)\times Z_2(u))\sqcup(Z_1(u)\times Z_3(u))\sqcup(Z_2(u)\times Z_3(u)).
    \end{equation}
\end{lem}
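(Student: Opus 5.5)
The plan is to compare $\oF$ with the relative Fano scheme of lines $F:=F_1(\CX/U)\subset\Gr(2,4)\times U$ of the whole family. $F$ is closed and proper over $U$, and $\oF\subset F$ because $\oF$ is the closure of $F_1(\CX^{\rm sm}/U^{\rm sm})$. The strategy is to first show that $F\to U$ is finite near $\IV(t)$ with fibers of length exactly $27$ there. We then deduce flatness of $F$ over a neighborhood of $\IV(t)$, which forces $F=\oF$ over that neighborhood.

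\textbf{Fibers over $\IV(t)$.} Fix a geometric point $u\in\IV(t)$, so $\CX_u=X_0$. Every line in $X_0$ lies in some plane $H_k$. On $H_k$ the equation $x_1x_2x_3=tP$ restricts to first order in $t$ and produces the condition $P|_{\text{line}}=0$. The way to see this properly is that $F_u$ is the fiber of the relative Hilbert scheme, but we should first identify the \emph{limits} of lines. We argue via the degree-$27$ cover. Over a general point near $u$, the 27 lines of $\CX_{u'}$ specialize to lines $L\subset H_k$. Each such limit must meet $\D_i$ and $\D_j$ only at points of $Z_i(u)$ and $Z_j(u)$: near $\D_i$, a line on $x_1x_2x_3=tP$ forces $P$ to vanish at its intersection with the double locus.

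So the candidate set is exactly the union of the $L_{ij}(\lambda,\mu)$. This is a finite set, and counted with multiplicity it has $9+9+9=27$ points. These lines are all distinct and lie in different planes or pass through different pairs of points, so the three pieces $Z_i\times Z_j$ are disjoint. This already gives quasi-finiteness of $\oF\to U$ at $\IV(t)$. Since the map is proper, it is finite over a neighborhood, and the neighborhood can be taken $G$-invariant because $\IV(t)$ is $G$-stable.

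\textbf{Scheme structure and flatness.} This is the main obstacle. To obtain the decomposition \eqref{Equation. decompose Fu} scheme-theoretically, I would compute the local equations of $F$ near a line $L_{ij}(\lambda,\mu)$ directly. Parametrize lines near it in $H_k$-direction and transverse directions by affine coordinates on $\Gr(2,4)$, say lines $x_k=a x_i+b x_j$, $x_0=\lambda' x_i+\mu' x_j$. Substitute a parametrization into $x_1x_2x_3-tP$ and extract the coefficients of the resulting binary cubic. The expectation is that, after eliminating variables by the implicit function theorem, the ideal becomes $(q_{u,i}(\lambda'),q_{u,j}(\mu'))$ together with equations determining $a,b$ as functions of $t$; in particular one should find something like $ab\cdot(\ldots)=t\,(\ldots)$ being solved for $a$ linearly. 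This exhibits $F$ locally as $\Spec$ of $\CO_U[\lambda',\mu']/(q_i,q_j)$, i.e.\ $F$ is locally the product of two relative length-3 divisors on the lines $\D_i,\D_j$ over $U$. Such an object is finite flat of degree $9$ over a neighborhood of $u$.

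\textbf{Conclusion.} It follows that $F$ is finite flat of degree $27$ near $\IV(t)$. Since a flat scheme over the integral base $U$ has no embedded or vertical components, $F$ is the closure of its restriction to $U^{\rm sm}$, which is $F_1(\CX^{\rm sm}/U^{\rm sm})$. Hence $F=\oF$ there, which gives both finite flatness of degree $27$ and the fiber decomposition. If the direct elimination is messy, I would use the $(\IG_m)^3$-weights to restrict the possible monomials and simplify it.
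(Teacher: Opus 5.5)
There is a genuine gap: the scheme you work with, $F=F_1(\CX/U)$, is not finite over any neighborhood of $\IV(t)$. The plan ``show $F$ is finite flat of degree $27$, hence $F=\oF$'' therefore cannot work. For $u\in\IV(t)$ the fiber $\CX_u=X_0$ is the union of the planes $H_1,H_2,H_3$, and every line in any $H_k$ lies on $X_0$. So $F_u$ contains three copies of the dual plane and is $2$-dimensional.

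Concretely, use the chart of lines $x_k=\alpha x_i+\beta x_j$, $x_0=\lambda x_i+\mu x_j$. Write $P|_L=\Psi_0x_i^3+\Psi_1x_i^2x_j+\Psi_2x_ix_j^2+\Psi_3x_j^3$, and note $x_1x_2x_3|_L=\alpha x_i^2x_j+\beta x_ix_j^2$. Then $F$ is cut out by the four coefficients of
\[
-t\Psi_0x_i^3+(\alpha-t\Psi_1)x_i^2x_j+(\beta-t\Psi_2)x_ix_j^2-t\Psi_3x_j^3 .
\]
Its fiber over $t=0$ is $\{\alpha=\beta=0\}$ with $\lambda,\mu$ free. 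The equations $q_{u,i}(\lambda)=q_{u,j}(\mu)=0$ that you expect to find appear only after dividing $t\Psi_0$ and $t\Psi_3$ by $t$. That division is legitimate only for the closure of the $t\neq0$ part, so those equations describe $\oF$, not $F$. Your heuristic ``restricts to first order in $t$'' is really this saturation. But the conclusion that $F$ is flat of degree $27$ and hence equals $\oF$ is false, and so is the claimed local description of $F$.

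The missing step is to define the candidate scheme by hand:
\[
Z_{ij}:=\IV(\alpha-t\Psi_1,\ \beta-t\Psi_2,\ \Psi_0,\ \Psi_3)\subset \mathscr{U}_{ij}\times U.
\]

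\begin{enumerate}
\item Its fiber over $u\in\IV(t)$ is $Z_i(u)\times Z_j(u)$, which is finite. So the four equations form a regular sequence along $\IV(t)$ in the $11$-dimensional smooth ambient space.
\item Hence $Z_{ij}\to U$ is Cohen--Macaulay of relative dimension $0$, and therefore flat there by miracle flatness. This is cleaner than your ``product of two relative length-$3$ divisors'' picture, which is not literally true off $t=0$ because $\Psi_0,\Psi_3$ involve $\alpha,\beta$.
\item Over $U^{\mathrm{sm}}$, $Z_{ij}$ agrees with $F_1(\CX^{\mathrm{sm}}/U^{\mathrm{sm}})$ on the chart. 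Your closure argument then applies to $Z_{ij}$: a flat scheme over integral $U$ has no components over $\IV(t)$. So $Z_{ij}=\oF\cap(\mathscr{U}_{ij}\times U)$ near $\IV(t)$.
\item You still need $\oF_u$ to lie in $\mathscr{U}_{12}\cup\mathscr{U}_{13}\cup\mathscr{U}_{23}$, i.e.\ no limit line passes through $p=[1:0:0:0]$. Your remark that a limit line meets the double curves only in the $Z_i(u)$ gives this, but it needs an actual argument. Take a DVR family of lines specializing to $L_0\ni p$ and intersect it with a coordinate hyperplane $x_l=0$ with $x_l|_{L_0}\neq 0$; this gives a section specializing to $p$. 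On the generic fiber $t\neq0$, so $P$ vanishes on that section. This contradicts $P_u(p)=1$.
\end{enumerate}

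With $\oF$ replacing $F$ throughout, the scheme-theoretic decomposition and finite flatness of degree $27$ after a $G$-invariant shrinking then follow as you indicate.
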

\begin{proof}
    Fix $\{i,j,k\}=\{1,2,3\}$ and consider the affine chart $\mathscr{U}_{ij}\subset\Gr(2,4)$ whose lines are written uniquely as
\begin{equation}
\label{Equation. equation of lines}
    L_{\alpha,\beta,\lambda,\mu}:\quad x_k=\alpha x_i+\beta x_j,\quad x_0=\lambda x_i+\mu x_j.
\end{equation}
It lies in $H_k$ precisely when $\alpha=\beta=0$. Substitution into $P$ gives
\[
    P|_{L_{\alpha,\beta,\lambda,\mu}}=\Psi_0x_i^3+\Psi_1x_i^2x_j+\Psi_2x_ix_j^2+\Psi_3x_j^3,
\]
where 
\[
    \Psi_0=\lambda^3+P_1\lambda+Q_1+P_3\lambda\alpha^2+Q_3\alpha^3,
\]
\[
    \Psi_1=3\lambda^2\mu+P_1\mu+P_3(2\lambda\alpha\beta+\mu\alpha^2)+3Q_3\alpha^2\beta,
\]
\[
    \Psi_2=3\lambda\mu^2+P_2\lambda+P_3(\lambda\beta^2+2\mu\alpha\beta)+3Q_3\alpha\beta^2,
\]
\[
    \Psi_3=\mu^3+P_2\mu+Q_2+P_3\mu\beta^2+Q_3\beta^3.
\]
Since $x_1x_2x_3|_{L_{\alpha,\beta,\lambda,\mu}}=\alpha x_i^2x_j+\beta x_ix_j^2$, the restriction of $x_1x_2x_3=tP$ is
\[
    -t\Psi_0x_i^3+(\alpha-t\Psi_1)x_i^2x_j+(\beta-t\Psi_2)x_ix_j^2-t\Psi_3x_j^3=0.
\]
Over $\{t\neq0\}\subset U$, the line lies on the cubic surfaces exactly when
\[
    \Psi_0=0,\quad\alpha=t\Psi_1,\quad \beta=t\Psi_2,\quad\Psi_3=0.
\]
Set
\begin{equation}
\label{Equation. Ideal of lines}
    Z_{ij}=\IV(\alpha-t\Psi_1,\beta-t\Psi_2,\Psi_0,\Psi_3)\subset\mathscr{U}_{ij}\times U.
\end{equation}
Reducing (\ref{Equation. Ideal of lines}) at $u$ gives $\alpha=\beta=0$ and $q_{u,i}([\lambda:1])=q_{u,j}([\mu:1])=0$. Thus
\[
    (Z_{ij})_u\cong\Spec\frac{\kappa(u)[\lambda,\mu]}{(q_{u,i}([\lambda:1]),q_{u,j}([\mu:1]))}=Z_i(u)\times Z_j(u).
\]
 The four generators in (\ref{Equation. Ideal of lines}) form a regular sequence along $\IV(t)$. Hence $Z_{ij}\to U$ is Cohen-Macaulay of relative dimension 0 at every point over $\IV(t)$. Thus it is flat at every such point. By construction, 
 \[
     F_1(\CX^{\mathrm{sm}}/U^{\mathrm{sm}})\cap(\mathscr{U}_{ij}\times U^{\mathrm{sm}})=Z_{ij}\times_UU^{\mathrm{sm}}.
 \]
 By taking closure, $\oF\cap(\mathscr{U}_{ij}\times U)=Z_{ij}$ near fibers over $\IV(t)$.
 \begin{claim}
     For $u\in\IV(t)$, no line represented by $\oF_u$ passes through $p=[1:0:0:0]$.
 \end{claim}
 \begin{proof}[Proof of claim]
     Suppose such a line $L_0$ exists and choose a DVR family of lines $\CL_R\subset \CX_R$ over $\Spec R$, whose special fiber is $L_0$ and whose generic fiber lies on a smooth cubic surface. We may assume $x_1|_{L_0}\neq0$. Since $p\in L_0$ and $x_1(p)=0$, we have $L_0\cap\{x_1=0\}=p$. Thus the intersection $\CL_R\cap\{x_1=0\}$ is a section $q:\Spec R\to\CL_R$ specializing to $p$. Let $\eta$ be the generic point of $\Spec R$, then $tP(q(\eta))=0$. Since $\CX_{\eta}$ is smooth, we have $P(q(\eta))=0$, implying $P_u(p)=0$. However, a direct computation shows that $P_u(1,0,0,0)=1$, which gives a contradiction.
 \end{proof}
 With this claim, we have $\oF_u=(Z_{12})_u\sqcup (Z_{13})_u\sqcup (Z_{23})_u$ for $u\in\IV(t)$, implying the decomposition (\ref{Equation. decompose Fu}). Thus $\oF\to U$ is quasi-finite and flat at every point over $\IV(t)$. Since $\oF\to U$ is proper and $G$-equivariant, the image of non-quasi-finite locus and the image of non-flat locus are both $G$-invariant closed subsets disjoint from $\IV(t)$. Therefore, after a $G$-invariant shrinking of $U$ around $\IV(t)$, $\oF\to U$ is finite flat of degree 27. 
\end{proof}

Let $\CL_{\Gr}\subset\IP^3\times \Gr(2,4)$ be the universal family over $\Gr(2,4)$. Set $\CL:=\CL_{\Gr}\times_{\Gr(2,4)}\oF\subset \IP^3\times \oF$. Since the relative Fano scheme of lines $F_1(\CX/U)\subset\Gr(2,4)\times U$ is closed and contains $F_1(\CX^{\rm{sm}}/U^{\rm{sm}})$, it also contains $\oF$. Hence $\CL\subset \CX\times_U \oF$ scheme-theoretically. By Lemma \ref{Lemma. barF deg 27}, the projection $q:\CX\times_{U}\oF\to \CX$ is a finite flat morphism after shrinking $U$ around $\IV(t)$. Thus
$$\CD:=q_*[\CL]$$
is an effective Weil divisor on $\CX$. 
\begin{lem}
\label{Lemma. D is cartier}
    After replacing $U$ by a $G$-invariant shrinking satisfying Lemma \ref{Lemma. barF deg 27}, $\CD$ is a relative effective Cartier divisor on $\CX/U$.
\end{lem}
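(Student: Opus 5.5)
We want to show that $\CD=q_*[\CL]$ is Cartier on $\CX$ and contains no fiber component, after a $G$-invariant shrinking of $U$ around $\IV(t)$. The approach is to identify $\CD$ with the zero locus of an explicit section: a norm (determinant) of an equation cutting out $\CL$.

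\emph{Construction of the section.} First, we work over an affine open chart of $\CX$. Since $\oF\to U$ is finite flat of degree 27 by Lemma \ref{Lemma. barF deg 27}, the morphism $q:\CX\times_U\oF\to\CX$ is finite flat of degree 27. Second, $\CL\subset\CX\times_U\oF$ is locally a relative Cartier divisor: it is cut out by the two linear equations of the line, one of which (say $x_k-\alpha x_i-\beta x_j$ on the chart $\mathscr{U}_{ij}$) already vanishes on $\CX\times_U\oF$ up to the cubic equation, so on $\CX\times_U\oF$ the line is locally defined by a single equation $\ell$. More precisely, I would check that $\CL$ is a relative effective Cartier divisor on the flat family $\CX\times_U\oF\to\oF$, since each fiber $L\subset\CX_u$ is a Cartier divisor away from badly singular points. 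Third, the norm $N_q(\ell)$ is then a local equation on $\CX$ whose divisor equals $q_*[\CL]$, because for finite flat $q$ the norm of a local equation computes the pushforward cycle. Hence $\CD$ is Cartier wherever $\CL$ is Cartier on $\CX\times_U\oF$.

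\emph{Main obstacle.} The difficulty is at points over $\IV(t)$, where $\CX_u=X_0$ is not normal and lines may pass through the double curves $\D_i$. There a line $L_{ij}(\lambda,\mu)\subset H_k$ is not Cartier in $X_0$. For instance, it meets $\D_i$ at one point, and near that point $X_0$ is locally $x_ix_k=0$ times a line.

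The first thing I would try is an explicit local computation near $\IV(t)$. The pushed-forward divisor on the fiber is
\[
\sum_{\{i,j,k\}}J_{ij}(u)\subset X_0.
\]
I would check that this restricts to each $H_k$ compatibly along the double curves. Near a point of $\D_i\cap Z_i(u)$, the lines from $J_{ij}$ in $H_k$ and those from $J_{ik}$ in $H_j$ meet $\D_i$ in the same subscheme $Z_i(u)$ with matching multiplicities. This suggests that $\CD_u$ is cut out by the single section
\[
\prod_{\lambda\in Z_i(u)}(x_0-\lambda x_i)^{3\,\mult}
\]
locally in $x_0,x_i$; more globally, a cubic form such as $q_{u,i}(x_0,x_i)^3$ adjusted in the other variables.

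Accordingly, I would try to write a global equation on $\CX$ near $\IV(t)$: a degree-$9$ form $\Phi\in\CO_U[x_0,\dots,x_3]$ with $\CD=\IV(\Phi)\cap\CX$. On $t=0$, $\Phi$ should restrict to something like $\prod_i q_{u,i}$-type expressions on each plane. To obtain it, I would use the norm construction globally: define $\Phi$ from the finite flat algebra $\CO_{\oF}$ via $\det$ of multiplication by the linear form $x_0-\lambda x_i-\mu x_j$ on the rank-27 bundle. Then $\IV(\Phi)|_{\CX}$ agrees with $\CD$ over $t\ne0$, since both equal the sum of the 27 lines there. Both $\CD$ and $\IV(\Phi)|_{\CX}$ are divisors without components over $\IV(t)$: $\Phi$ does not vanish identically on any $H_k$ because $\Phi(p)\neq0$, by the Claim that no limit line passes through $p$. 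Hence they coincide as Weil divisors, since $\CX$ is normal (it is a hypersurface smooth in codimension one, as the total space is regular away from a codimension $\ge2$ locus).

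Finally, $\IV(\Phi)$ is Cartier. It does not contain any fiber component, because $\Phi(p)\neq0$ over $\IV(t)$, and after a $G$-invariant shrinking of $U$ the same holds at all points. It is therefore a relative effective Cartier divisor by the fiberwise criterion on the flat family $\CX\to U$.
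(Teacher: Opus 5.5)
Your proposal has a genuine gap: it never produces a local equation for $\CD$ at the only points where there is something to prove.

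\textbf{Step 1 fails.} $\CL\to\oF$ is a $\IP^1$-bundle, hence flat. So if $\CL$ were an effective Cartier divisor on $\CX\times_U\oF$, it would be a relative effective Cartier divisor, and every line $L_{ij}(\lambda,\mu)$ would be Cartier on $X_0$. As you note yourself, a line in $H_k$ is not Cartier on $X_0$ at its points on $\D_i$ and $\D_j$. So the norm of a local equation of $\CL$ shows only that $\CD$ is Cartier away from these points.

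\textbf{The global replacement $\Phi$ does not compute $\CD$.} A linear form $x_0-\lambda x_i-\mu x_j$ vanishing on a line $L$ cuts a cubic surface in $L$ plus a residual conic $C_L$. The determinant of multiplication by it on the rank-$27$ bundle is therefore a form of degree $27$, not $9$. On a smooth fiber its zero locus is $\sum_L(L+C_L)$, a curve of degree $81$, not the sum of the $27$ lines. The form is also only defined chart by chart on $\mathscr{U}_{ij}$. Hence the identification $\IV(\Phi)|_{\CX}=\CD$ over $\{t\neq0\}$, on which the rest of your argument rests, is false.

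\textbf{The compatibility check is not enough.} Your check along $\D_i$ does show that $\CD_u$ is Cartier on $X_0$. Passing from this to Cartierness of $\CD$ on the total space would need further input, for instance that $\CO_{\CX}(-\CD)$ commutes with restriction to fibers, and you do not supply it.

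\textbf{What is missing} is control of the local class of the Weil divisor $\CD$ at the singular points of $\CX$. Elsewhere $\CX$ is regular, so $\CD$ is automatically Cartier there. On $\{x_1\neq0\}$, $Q_1$ enters $s:=P(x_0,1,x_2,x_3)$ linearly with coefficient $1$, so you may use $s$ as a coordinate instead of $Q_1$. This gives $\CX_1\cong\Spec A$ with $A=\Ik[x_0,x_2,x_3,t,s,P_1,P_2,Q_2,P_3,Q_3]/(x_2x_3-ts)$.
\begin{itemize}
\item $\CX_1$ is normal and singular exactly along $\IV(x_2,x_3,t,s)$. Over $\IV(t)$ this locus is precisely the set of points $Z_1(u)\subset\D_1$ where the limit lines meet $\D_1$.
\item $\mathrm{Cl}(\CX_1)\cong\IZ$, generated by $E_2=\IV(t,x_2)$, with $E_2+E_3=\div(t)$.
\item The paper concludes by symmetry: the transposition of the indices $2,3$ in $S_3$ preserves $\CD$ and exchanges $E_2$ and $E_3$. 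So $[\CD|_{\CX_1}]=-[\CD|_{\CX_1}]$, hence $[\CD|_{\CX_1}]=0$ in the torsion-free group $\mathrm{Cl}(\CX_1)$.
\item The same argument works on the other two charts, and the points with $x_1=x_2=x_3=0$ are smooth points of $\CX$.
\item The relative condition then follows because $\CD$ contains no fiber components.
\end{itemize}
Some argument of this kind, determining the class of $\CD$ at these ordinary double points, is what your proof is missing.
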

\begin{proof}
    Consider the affine chart $\CX_1:=\{x_1\neq0\}\cap\CX$ for the family (\ref{Equation. Family on U}) without any shrinking of $U$. Put $s=P(x_0,1,x_2,x_3)$. Since $Q_1$ has coefficient 1 in $s$, we have 
    \[
        \CX_1\cong\Spec A,\quad A=\Ik[x_0,x_2,x_3,t,s,P_1,P_2,Q_2,P_3,Q_3]/(x_2x_3-ts).
    \]
     Since $\Spec A$ is a hypersurface, it is Cohen-Macaulay, thus satisfies Serre's $S_2$ condition. Hence $\Spec A$ is normal because $\mathrm{Sing}(\Spec A)=\IV(x_2,x_3,t,s)$ is of codimension 3. Let $E_2=\IV(t,x_2)$ and $E_3=\IV(t,x_3)$ be two prime divisors. Then $\mathrm{Cl}(\CX_1)$ is generated by $E_2$ and $E_3$ with relation $\div(t)=E_2+E_3$. Take a $G$-invariant shrinking $\widetilde{U}\subset U$ satisfying Lemma \ref{Lemma. barF deg 27} and let $\overline{\CD}_1$ be the closure of $\CD|_{\CX_1\times_U\widetilde{U}}$ in $\CX_1$ as a Weil divisor. The involution
    \[
        \CX_1\longrightarrow\CX_1,\quad (x_2,x_3,P_2,Q_2,P_3,Q_3)\mapsto(x_3,x_2,P_3,Q_3,P_2,Q_2)
    \]
    preserves the family and $\overline{\CD}_1$, while it exchanges $E_2$ and $E_3$. Thus $[\overline{\CD}_1]=-[\overline{\CD}_1]$ in $\mathrm{Cl}(\CX_1)$. Since $\mathrm{Cl}(\CX_1)\cong\IZ$ is torsion-free, $[\overline{\CD}_1]=0$. Then $\overline{\CD}_1$ is principal, thus its restriction $\CD|_{\CX_1\times_U\widetilde{U}}$ is Cartier. The same argument applies on $x_2\neq 0$ chart and $x_3\neq 0$ chart.
    
    The remaining points of $\CX_{\widetilde{U}}$ satisfy $x_1=x_2=x_3=0$, so $x_0\neq 0$. At such a point, 
 \[
     \frac{\partial(x_1x_2x_3-tP)}{\partial t}=-P=-x_0^3\neq 0,
 \]
  and therefore $\CX$ is smooth there. Hence $\CD$ is also Cartier there. Therefore $\CD$ is Cartier on $\CX_{\widetilde{U}}$. Since  $\Supp(\CD)\cap\CX_u\subset q(\CL_u)$ is a finite union of lines, $\Supp(\CD)$ contains no irreducible components of a fiber. Hence $\CD$ is a relative effective Cartier divisor on $\CX_{\widetilde{U}}/\widetilde{U}$.
\end{proof}
\begin{prop}
\label{Proposition. The family of bpCY on U0}
There exists a $G$-invariant open neighborhood $\IV(t)\subset U^{\circ}\subset U$ such that
\[
    f:(\CX_{U^{\circ}},\frac19\CD_{U^{\circ}})\longrightarrow U^{\circ}
\]
is a family of boundary polarized CY pairs with the following properties:
\begin{enumerate}
    \item The underlying surface of each fiber is Gorenstein.
    \item For $u\in U^{\circ}$ with $\CX_u$ smooth, $\CD_u$ is the sum of the 27 lines on $\CX_u$.
    \item For $u\in \IV(t)$, $\CD_u=J_{12}(u)+J_{23}(u)+J_{13}(u)$. In particular, the fiber over $0\in U^{\circ}$ is $(X_0,\frac19 D_0)$.
\end{enumerate}
\end{prop}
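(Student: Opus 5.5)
We check the conditions of Definition \ref{Definition. family of bpcy} for $(\CX_{U^{\circ}},\frac19\CD_{U^{\circ}})$ with $N=9$. We start with the fiberwise properties (1)--(3), then the CY and slc conditions, and we shrink $U$ $G$-invariantly around $\IV(t)$ at each step where an open condition is needed.

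\emph{Gorenstein fibers and the divisor on fibers.} Every fiber $\CX_u$ is a cubic hypersurface in $\IP^3$, hence Gorenstein with $\omega_{\CX/U}\cong\CO_{\CX}(-1)$. This gives (1), and it makes $\omega_{\CX/U}^{[m]}$ a line bundle that commutes with base change. By Lemma \ref{Lemma. D is cartier}, $\CD$ is a relative Cartier divisor, so $\CD_u$ is the scheme-theoretic restriction and $\omega^{[m]}_{\CX/U}(q\CD)$ is a line bundle flat over $U$ and compatible with base change. This gives condition (5), and K-flatness in condition (2) holds automatically for relative Cartier divisors.

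For (2): over $U^{\rm sm}$ the map $\oF\to U$ is finite étale of degree 27, and $q$ restricted to $\CL$ is an isomorphism onto each line. Hence $q_*[\CL]_u$ is the sum of the 27 lines, and compatibility of $q_*$ with restriction to fibers follows from flatness of $q$.

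For (3): by Lemma \ref{Lemma. barF deg 27}, $\oF_u$ decomposes with multiplicities $\mult_\lambda Z_i(u)\cdot\mult_\mu Z_j(u)$, and the line attached to $(\lambda,\mu)$ is $L_{ij}(\lambda,\mu)\subset H_k$. Pushing forward therefore gives $J_{12}(u)+J_{13}(u)+J_{23}(u)$. At $u=0$ we have $q_{0,i}=x_0^3$, so $Z_i(0)$ is the triple point $\lambda=0$ and each $J_{ij}(0)=9\,\IV(x_k,x_0)$. Summing, $\CD_0=9H_0=D_0$.

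\emph{CY condition.} In each fiber, $\CD_u$ is cut out in degree 9; on smooth fibers the 27 lines form $|-9K|$. So the line bundle $\CM:=\omega_{\CX/U}^{\otimes9}(\CD)=\CO_{\CX}(-9)(\CD)$ has degree $0$ on the fibers. I would argue that $\CO_{\CX}(\CD)\cong\CO_{\CX}(9)$ globally as follows. The Picard group of a cubic surface relative to $U$ modulo pullbacks from $U=\IA^7$ is computed on $X_0$ and generic fibers, and $\mathrm{Pic}(U)=0$. More concretely, $\CM|_{\CX_u}\cong\CO$ holds for $u$ in the smooth locus and at $u\in\IV(t)$, because $\CD_u\sim 9H$ on $X_0$ as a sum of hyperplane sections of the planes. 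Then $f_*\CM$ is a line bundle on a shrinking of $U$ by semicontinuity, and it is trivial because we can take $U^{\circ}$ to be a $G$-invariant affine neighborhood and use $\mathrm{Pic}=0$, or use the $G$-linearization. Hence $\omega_{\CX/U}^{[9]}(\CD)\cong_U\CO$, which is condition (4). Ampleness of $\frac19\CD_u\equiv H$ is then clear.

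\emph{slc condition.} This is the main obstacle: we need $(\CX_u,\frac19\CD_u)$ to be slc for all $u\in U^{\circ}$. Being slc is open in families of this type, since the locus where $(X_u,D_u)$ is slc is open for locally stable-type families with $K+D$ $\IQ$-Cartier. I would first prove slc for every $u\in\IV(t)$ and then shrink $G$-invariantly, the bad locus being closed and $G$-invariant and disjoint from $\IV(t)$. For $u\in\IV(t)$, the surface $X_0$ is slc (it is normal crossing in codimension one with toric normalization). On the component $H_k\cong\IP^2$, the conductor is the two lines $\D_i,\D_j$, and the boundary is $\frac19J_{ij}(u)$, a sum of $9$ lines counted with multiplicity. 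Each line $L_{ij}(\lambda,\mu)$ has coefficient at most $\frac{3\cdot3}{9}=1$, with equality only when $\lambda,\mu$ are triple roots, which forces $J_{ij}=9\IV(x_0)$. I would then check that $(\IP^2,\D_i+\D_j+\frac19J_{ij}(u))$ is lc: all lines in $J_{ij}$ avoid $\D_i\cap\D_j=p_k$, and through a point of $\D_i$ pass at most $3$ lines, each meeting $\D_i$ transversally, with total coefficient $\le\frac{9}{9}\cdot$(multiplicity)/$3$. Hence a local log canonical threshold computation for a few concurrent lines plus one transversal line gives lc. This reduces to checking that the multiplicity at each point is at most $2$ counting the conductor with coefficient $1$, or is handled by the line-arrangement criterion. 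Finally, the differents on the double curves $\D_i$ agree from both sides because $Z_i(u)$ is shared, so the pair is slc by Koll\'ar's gluing criterion.
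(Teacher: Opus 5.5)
Your proposal follows essentially the paper's route. Relative Cartier-ness of $\CD$ gives flatness and base change. The line bundle $\CO_{\CX}(\CD)\otimes\CO_{\CX}(-9)$ is trivial on fibers over $\IV(t)$ and is spread out by cohomology and base change with $G$-invariant shrinking. Slc is checked over $\IV(t)$ and spread by openness; your lc check on the planes supplies a verification the paper only asserts, though you should add that at an interior point of $H_k$ at most one line per point $\lambda$ of $Z_i(u)$ passes, so the total coefficient there is at most $1$.

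Two routine steps need to be stated explicitly.
\begin{enumerate}
\item After shrinking, the evaluation map $f^*f_*\CM\to\CM$ is an isomorphism, because it is one on fibers over $\IV(t)$. Triviality of $f_*\CM$ alone does not trivialize $\CM$.
\item $\CX_u$ is smooth at the generic points of $\CD_u$. The paper obtains this from upper semicontinuity of $\dim(\CD_u\cap\Sigma_u)$, where $\Sigma$ is the relative singular locus. This is the Mumford condition, and you need it before invoking openness of slc.
\end{enumerate}
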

\begin{proof}
    The statements (1)(2) follow from the construction of $\CX$ and $\CD$. Now we prove (3). Since $\CL\to\oF$ is a $\IP^1$-bundle and $\oF\to U$ is flat, $\CL\to U$ is flat. By Lemma \ref{Lemma. D is cartier}, we may assume that $\CD$ is relative Cartier after replacing $U$ by a $G$-invariant shrinking, thus $\CD\to U$ is also flat. Therefore restricting $q_*[\CL]=\CD$ to a geometric fiber gives $$\CD_u=(q_u)_*[\CL_u].$$
   Fix $u\in\IV(t)$. Since $\CL\subset\CX\times_U\oF$ is the universal family of lines, the reduced irreducible component of $\CL_u$ lying over $(\lambda,\mu)\in\oF_u$ is naturally identified with $L_{ij}(\lambda,\mu)$, and the evaluation map $q_u:\CL_u\to\CX_u$ restricts to identity under this identification. Hence by Lemma \ref{Lemma. barF deg 27}, 
   \[
   \CD_u=(q_u)_*[\CL_u]=J_{12}(u)+J_{23}(u)+J_{13}(u).
   \]
    Set $\CM=\CO_{\CX}(\CD)\otimes\CO_{\CX}(-9)$. Then $\CM|_{\CX_u}\cong\CO_{\CX_u}$ for any $u\in\IV(t)$. The sequence
 \[
     0\longrightarrow\CO_{\IP^3}(-3)\longrightarrow\CO_{\IP^3}\longrightarrow\CO_{\CX_u}\longrightarrow 0
 \]
 gives $H^1(\CX_u,\CO_{\CX_u})=H^2(\CX_u,\CO_{\CX_u})=0$ and $H^0(\CX_u,\CO_{\CX_u})\cong\Ik$. Thus by a further $G$-invariant shrinking of $U$ around $\IV(t)$, we may assume that $R^if_*\CM=0$ for $i>0$ and $M:=f_*\CM$ is a line bundle on $U$. The evaluation map $f^*M\to\CM$ is an isomorphism on $\CX_u$ for any $u\in\IV(t)$. Hence after a $G$-invariant shrinking of $U$ around $\IV(t)$, we may assume that $f^*M\cong \CM$. By adjunction, $\omega_{\CX/U}\cong\CO_{\CX}(-1)$.  Consequently,
  \[
      f^*M\cong\CO_{\CX}(\CD)\otimes\CO_{\CX}(-9)\cong\omega_{\CX/U}^{\otimes9}(\CD).
  \]
  Hence $K_{\CX/U}+\frac19\CD\sim_{\IQ,U}0$ and $\CO_{\CX}(\CD)\cong_{U}\CO_{\CX}(9)$ is relatively ample. Let $\Sigma=\rm{Sing}(\CX/U)$ be the relative singular locus. Then $\dim(\CD_u\cap\Sigma_u)\leq0$ for $u\in \IV(t)$. Then by upper semicontinuity, after a $G$-invariant shrinking of $U$ around $\IV(t)$, we may assume that it holds for any $u\in U$. Then $\CX_u$ is smooth at the generic points of $\CD_u$ and $\CD_u$ contains no irreducible components of $\CX_u$. Hence $\CD$ is a relative Mumford divisor on $\CX/U$. Since $(\CX_u,\frac19\CD_u)$ is slc for any $u\in \IV(t)$, we may assume that it holds for $u\in U$ by the openness of slc singularities.
 
  The above steps give a $G$-invariant open neighborhood $\IV(t)\subset U^{\circ}\subset U$. Thus $(\CX_{U^{\circ}},\frac19\CD_{U^{\circ}})\to U^{\circ}$ is a family of boundary polarized CY pairs satisfying (1)(2)(3).
\end{proof}
 
 \begin{lem}
     \label{Lemma. Locally quotient}
     There exists an open substack $\CY_1^{\circ}\subset\CY_1^{\cy}$ such that:
     \begin{enumerate}
         \item $(\CX_{U^{\circ}},\frac19\CD_{U^{\circ}})\to U^{\circ}$ is a family in $\CY_1^{\circ}$.
         \item $\CY_1^{\circ}\cong[A/\mathrm{PGL}_4]$ for some irreducible seminormal scheme $A$ of finite type  over $\Ik$ with a $\mathrm{PGL}_4$-action.
     \end{enumerate}
 \end{lem}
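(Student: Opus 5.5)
The plan is to realize $\CY_1^{\cy}$ (or a suitable open part of it) as a quotient of a locally closed subscheme of a Hilbert scheme of pairs in $\IP^3$ by $\mathrm{PGL}_4$. For every $[(X,\frac19D)]\in\CY_1^{\cy}(\Ik)$ with $K_X$ Cartier, the proof of Proposition \ref{Proposition. Unique closed pt} shows that $H^1(X,-mK_X)=0$ for $m\ge 0$ and that $-K_X$ is very ample, embedding $X$ as a cubic surface in $\IP^3$. In families the same argument applies. If $(X,\frac19D)\to T$ is a family in $\CY_1^{\cy}$, then $\omega_{X/T}^{-1}$ is a line bundle, and $f_*\omega_{X/T}^{-1}$ is locally free of rank $4$ and commutes with base change by cohomology and base change together with condition (5) of Definition \ref{Definition. family of bpcy}. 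After choosing a trivialization of this bundle, we obtain a $T$-embedding $X\hookrightarrow\IP^3_T$ as a flat family of cubic surfaces. The boundary $D$ is a K-flat Mumford divisor with $\CO(D)\cong\omega^{[-9]}$ fiberwise, so it is a relative Cartier divisor in $|\CO(9)|$ wherever it is Cartier.

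Step one is to let $\CH$ be the space of pairs consisting of a cubic surface $X\subset\IP^3$ (a point of $\IP(H^0(\CO_{\IP^3}(3)))$) and a divisor $D$ on $X$, recorded as a point of the relative Hilbert scheme (or Koll\'ar's space of K-flat Mumford divisors) of degree-$27$ curves. This $\CH$ is of finite type. Step two is to take the locally closed subscheme $\CH^\circ$ of $\CH$ where the conditions of Definition \ref{Definition. family of bpcy} hold, with $K_X$ Cartier and $\CO_{\IP^3}(1)|_X\cong\omega_X^{-1}$. These are locally closed conditions: slc is open, the index-one condition defines $\CM_1$ as in \cite[Proposition 3.3]{BL24}, and the isomorphism of line bundles is locally closed. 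The frame-bundle argument then gives $\CM_1\cong[\CH^\circ/\mathrm{PGL}_4]$, where the choice of $\mathrm{PGL}_4$ rather than $\mathrm{GL}_4$ reflects that the embedding is determined only up to the scalar ambiguity of the trivialization of $f_*\omega^{-1}$. Step three is to follow the construction in (\ref{Equation. construct Y1cy}). Closed substacks and seminormalizations commute with the smooth cover $\CH^\circ\to\CM_1$, so $\CY^{\cy}\cap\CM_1\cong[A'/\mathrm{PGL}_4]$, where $A'$ is the seminormalization of the closure of the locus $\CY$ inside $\CH^\circ$. The scheme $A'$ is irreducible because the locus of smooth cubics with their $27$ lines is irreducible, being an open part of the irreducible space of smooth cubics, and $A'$ is the seminormalized closure of it.

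Step four defines $\CY_1^{\circ}$. If $\CY_1^{\cy}=[A'/\mathrm{PGL}_4]$ is already a quotient, take $\CY_1^\circ=\CY_1^{\cy}$; otherwise take a $\mathrm{PGL}_4$-invariant open subset $A\subset A'$ containing the image of $U^\circ$. For (1), the family of Proposition \ref{Proposition. The family of bpCY on U0} is already embedded in $\IP^3\times U^\circ$ with $\omega_{\CX/U}\cong\CO(-1)$, so it defines a morphism $U^\circ\to\CH^\circ$. Its general fibers are smooth cubics with their $27$ lines by Proposition \ref{Proposition. The family of bpCY on U0}(2), and $U^\circ$ is smooth, hence reduced and seminormal. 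Therefore $U^\circ$ maps into the closure of the $\CY$-locus, and by the universal property of seminormalization this map lifts to $A$.

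I expect the main obstacle to be the compatibility of seminormalization and closure with the quotient presentation. Seminormalization of a stack is defined via smooth covers, so it is compatible with the map $\CH^\circ\to\CM_1$. The lifting of $U^\circ$ needs seminormality of $U^\circ$ together with the fact that the generic point of $U^\circ$ lands in $\CY$. The last point requires that $U^\circ\cap U^{\rm sm}$ is dense in $U^\circ$, which holds since $U^{\rm sm}$ is a nonempty open subset of the irreducible space $U$.
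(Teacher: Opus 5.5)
Your proposal is correct and follows essentially the paper's route: a $\mathrm{PGL}_4$-quotient presentation of the anticanonically embedded locus of $\CM$, pulled back to $\CY_1^{\cy}$, with seminormality preserved under the smooth cover and irreducibility coming from density of the smooth-cubic locus. The paper takes the presentation directly from \cite[Proposition 3.9]{ABB23} on the open substack where $-K$ is very ample rather than rebuilding it on $\CM_1$, and your argument for (1) via seminormality of $U^{\circ}$ and the universal property of seminormalization makes explicit a step the paper leaves implicit.
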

 \begin{proof}
     Let $\CM^{\circ}\subset\CM=\CM(\chi,9,\frac19)$ denote the full subcategory consisting of families $(X,\frac19D)\to T$ such that $-K_{X_t}$ is very ample and $\deg(D_t)=27$ for any $t\in T$. It is an open substack by \cite[Proposition 3.5]{ABB23}. From \cite[Proposition 3.9]{ABB23}, there exists a finite type scheme $H$ with a $\mathrm{PGL}_4$-action and an isomorphism $\CM^{\circ}\cong[H/\mathrm{PGL}_4]$. Consider the Cartesian diagram
    \[\begin{tikzcd}
	{A=H\times_{\CM^{\circ}}\CY_1^{\circ}} & {\mathcal{Y}_1^{\circ}=\CM^{\circ}\times_{\CM}\CY_1^{\cy}} & {\mathcal{Y}_1^{\rm{CY}}} \\
	H & {\mathcal{M}^{\circ}=[H/\mathrm{PGL}_4]} & {\mathcal{M}}
	\arrow[from=1-1, to=1-2]
	\arrow[from=1-1, to=2-1]
	\arrow[from=1-2, to=1-3]
	\arrow[from=1-2, to=2-2]
	\arrow[from=1-3, to=2-3]
	\arrow[from=2-1, to=2-2]
	\arrow[hook, from=2-2, to=2-3]
\end{tikzcd}\]
Here the morphism $\CY_1^{\cy}\to\CM$ is given by (\ref{Equation. construct Y1cy}). Then $\CY_1^{\circ}\cong[A/\mathrm{PGL}_4]$ is an open substack of $\CY_1^{\cy}$ and $A$ is a seminormal finite type scheme by \cite[Definition 13.6]{ABB23}. Since $-K_{\CX_u}\sim_{\IQ}\CO_{\CX_u}(1)$ is very ample for every $u\in U^{\circ}$, $(\CX_{U^{\circ}},\frac19\CD_{U^{\circ}})$ is a  family in $\CM^{\circ}$. Hence assertion (1) holds.

Set $A^{\rm{sm}}=A\times_{\CY_1^{\cy}}\CY$. Here the dense substack $\CY\subset\CY_1^{\cy}$, defined in Construction \ref{Construction. moduli Ycy}, parametrizes $(X,\frac19\sum_iL_i)$ where $X$ is a smooth cubic surface and $L_i$'s are the uniquely determined 27 lines on it. Then $A^{\rm{sm}}$ is canonically isomorphic to the subscheme of $\IP(H^0(\IP^3,\CO_{\IP^3}(3)))$ parametrizing smooth cubic surfaces, which is irreducible. Since $A\to\CY_1^{\cy}$ is an open map, $A^{\rm{sm}}$ is dense in $A$. Therefore $A$ is irreducible. 
 \end{proof}
By Lemma \ref{Lemma. Locally quotient}, the $G$-equivariant family $(\CX_{U^{\circ}},\frac19\CD_{U^{\circ}})\to U^{\circ}$ induces a $\mathrm{PGL}_4$-equivariant morphism
\begin{equation}
    \label{Equation. equivariant morphism Phi}
    \Phi:\mathrm{PGL}_4\times^{G}U^{\circ}\to A.
\end{equation}
Denote $z_0=(1,0)\in \mathrm{PGL}_4\times^{G}U^{\circ}$ and $y_0=\Phi(z_0)$. The key step in constructing a local quotient presentation of $\CY_1^{\cy}$ at $p_0$ is the following proposition:
\begin{prop}
    \label{Proposition. Etaleness of Phi}
    The morphism $\Phi$ is \'{e}tale at $z_0$.
\end{prop}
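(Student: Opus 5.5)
The plan is to compare dimensions, establish unramifiedness and injectivity of $\Phi$ near $z_0$, and then use the seminormality of $A$ from Lemma \ref{Lemma. Locally quotient} to upgrade to étaleness. Both sides have dimension $19$. Indeed, $\dim \mathrm{PGL}_4\times^G U^{\circ}=15-3+7=19$. By Lemma \ref{Lemma. Locally quotient}, $A$ is irreducible and contains the dense open $A^{\mathrm{sm}}$, which is an open subset of $\IP(H^0(\IP^3,\CO_{\IP^3}(3)))\cong\IP^{19}$. The source is smooth. So it suffices to show that, after shrinking to an open neighbourhood of $z_0$, $\Phi$ is an open immersion onto a neighbourhood of $y_0$. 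I will deduce this from three facts: $\Phi$ is unramified at $z_0$, $\Phi$ is injective on geometric points near $z_0$ with trivial residue field extensions, and every branch of $A$ through $y_0$ is hit by $\Phi$. Note that composing with the forgetful map $A\to\IP^{19}$, $(X,D)\mapsto X$, is not étale: all of $\IV(t)$ maps to $[X_0]$. So the boundary $\CD$ must be used essentially, and this rules out a naive argument through $\IP^{19}$.

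\emph{Step 1 (injectivity and stabilizers).} The fibre $\Phi^{-1}(y_0)$ consists of pairs $(g,u)$ with $g\cdot(\CX_u,\frac19\CD_u)\cong(X_0,\frac19D_0)$ as embedded pairs. Such a $u$ must have $t=0$, since otherwise the surface $\CX_u$ is irreducible. By Proposition \ref{Proposition. The family of bpCY on U0}(3), the boundary of such a fibre is $J_{12}(u)+J_{13}(u)+J_{23}(u)$, and this divisor recovers the three length-$3$ schemes $Z_i(u)\subset\D_i$, hence the coordinates $P_i(u),Q_i(u)$ up to the $G$-action. Since $D_0=9H_0$ forces each $Z_i(u)$ to be $3[1:0]$, the only point is $u=0$. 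Together with Lemma \ref{Lemma. Automorphism group}, this shows that $\Phi^{-1}(y_0)=\{z_0\}$ scheme-theoretically up to the stabilizer identification $G_{z_0}=G=\mathrm{Stab}(y_0)$. The same reconstruction argument, applied at the nearby points with $t=0$ and at the points with $t\neq 0$ (where one uses the equation $x_1x_2x_3=tP$ with the normalization of $P$), shows that $\Phi$ is quasi-finite and injective near $z_0$.

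\emph{Step 2 (unramifiedness).} I will check that $d\Phi_{z_0}$ is injective by a first-order computation. The tangent directions of $\mathrm{PGL}_4/G$ move the plane configuration $x_1x_2x_3=0$, or move $x_0$ by linear forms in $x_1,x_2,x_3$, which moves $D_0=(x_0^9)$. The direction $\partial_t$ smooths $X_0$. The directions $\partial_{P_i},\partial_{Q_i}$ with $t=0$ keep $X_0$ fixed but move $D$, since they deform $Z_i$ away from $3[1:0]$ at first order. To see this, I will map to a Hilbert scheme $\Hilb(\IP^3)\times\Hilb(\IP^3)$ recording $(X,D)$. Through $A\to[H/\mathrm{PGL}_4]$ there is a natural map to it, and the image of $d\Phi_{z_0}$ there has rank $19$ by inspection of these three groups of directions, which are visibly independent.

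\emph{Step 3 (surjectivity onto branches), which I expect to be the main obstacle.} Let $\Spec R\to A$ be an arc with closed point $y_0$ and generic point in $A^{\mathrm{sm}}$. I will show that, after a finite base change and a $\mathrm{PGL}_4(R)$-translation, the arc is pulled back from $U^{\circ}$. First, using the degeneration to $x_1x_2x_3=0$, choose coordinates with $X_R=(x_1x_2x_3=\pi^e F)$. Next, use the fact that the limit boundary is $9H_0$ to normalize $F$ modulo $x_1x_2x_3$ to the shape $x_0^3+\sum_i(P_ix_0x_i^2+Q_ix_i^3)$, after killing the other monomials by coordinate changes $x_0\mapsto x_0+l$ and rescalings. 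The delicate point is to show that the residual cross terms can be absorbed. These are $x_0^2x_i$, $x_ix_j^2$ and similar monomials. I will first try the Weierstrass-type substitution $x_0\mapsto x_0-\frac13(\text{coeff. of }x_0^2x_i)x_i$. Then I will argue that terms supported on two variables $x_i,x_j$ are divisible by $x_1x_2x_3$ modulo the equation after rescaling.

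\emph{Conclusion.} Steps 1--3 show that, near $z_0$, $\Phi$ is a quasi-finite, unramified, universally injective morphism onto an open neighbourhood of $y_0$ that is surjective on branches. By Zariski's main theorem it is then finite, birational and bijective with trivial residue field extensions onto its image. Since $A$ is seminormal, such a map is an isomorphism, so $\Phi$ is étale at $z_0$.
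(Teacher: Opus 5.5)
\paragraph{The gap: $\Phi$ is not injective near $z_0$.} Step~1 and your conclusion assert that $\Phi$ is injective on a neighbourhood of $z_0$, hence an open immersion there. This is false, not just unproved.

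Since $\mathrm{PGL}_4\times^{G}U^{\circ}$ is irreducible, every open neighbourhood of $z_0$ is dense. Injectivity near $z_0$ would therefore make $\Phi$ generically injective, i.e.\ a general smooth cubic surface would have a unique normal form $x_1x_2x_3=tP$ up to $G$. It does not.

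Take a general smooth plane cubic $h(x_1,x_2,x_3)$. The cyclic surface $x_0^3=h$ can be written as $x_1x_2x_3=t(x_0^3+\sum_iQ_ix_i^3)$ in at least four $G$-inequivalent ways, one for each inflection (Hesse) triangle of $h$. A computation like the one in Lemma \ref{Lemma. Dominance} shows that $\mathrm{PGL}_4\times^{G}U\to\IP(V)$ is \'{e}tale at each of these four points. Concretely, in Hesse coordinates the cubics $x_0^2x_\alpha$, $x_0\partial_ih$, $x_j\partial_ih$, $x_0x_i^2$, $x_i^3$ span $V$, because $\dim J(h)_3=9$ and the $x_i^3$ are not all in $J(h)_3$. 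Since $U^{\circ}$ is dense in $U$ and $\rho$ is injective on $A^{\mathrm{sm}}$, the general fibre of $\Phi$ has at least four points.

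Your reconstruction argument works only on $\IV(t)$, where $\CD_u$ remembers the schemes $Z_i(u)$. For $t\neq 0$, nothing recovers the frame from $(X,\sum_iL_i)$. For the same reason, lifting arcs through the single point $y_0$ cannot make $\Phi$ finite over a neighbourhood of $y_0$, so Zariski's main theorem does not give ``finite, birational, bijective''.

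\paragraph{How to repair it.} Only a local statement is needed, and your Steps~2 and~3 are the right ingredients; the paper assembles them differently.
\begin{enumerate}
\item Since $\mathrm{PGL}_4\times^{G}U^{\circ}$ is normal, $\Phi$ lifts to $\widetilde\Phi$ into the normalization $\nu:\widetilde A\to A$.
\item Apply Step~3 to arcs $\Spec R\to\widetilde A$ through an arbitrary point of $\nu^{-1}(y_0)$ with generic point over $A^{\mathrm{sm}}$. Since $\nu$ is separated, this gives $\nu^{-1}(y_0)=\{\widetilde\Phi(z_0)\}$, so $A$ is geometrically unibranch at $y_0$.
\item Unramifiedness at $z_0$ is your Step~2, i.e.\ Lemma \ref{Lemma. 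Unramified}; its real content is that $Z\mapsto 3Z$ is unramified at $[3p_i]$. Together with dominance, Lemma \ref{Lemma. Etale criterion} then gives \'{e}taleness at $z_0$, with no injectivity, finiteness or seminormality argument.
\end{enumerate}
Equivalently: on henselizations at $z_0$ and $y_0$ the map is finite and unramified with trivial residue extension, hence surjective by Nakayama. It is injective because both rings are domains of dimension $19$; $\CO^h_{A,y_0}$ is a domain precisely because $A$ is unibranch at $y_0$.

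\paragraph{Step~3 as sketched omits its decisive input.} Write the equation as $x_1x_2x_3=dx_0^3+\sum_i(a_ix_i^3+b_ix_i^2x_0+c_ix_ix_0^2)$. Before you can divide by $d$ and land at the origin of $U^{\circ}$, you must prove $v(d)<\min_i\{v(a_i),v(b_i),v(c_i)\}$. The paper gets this from the closure of the Fano scheme of lines of the generic fibre, which has length $9$ at each line $\IV(x_0,x_i)$ of $X_0$.

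The mechanism for removing unwanted monomials is also not ``divisibility by $x_1x_2x_3$''. They are killed by the substitutions $x_k\mapsto x_k+\pi^nl_k$, which change the equation by $\pi^n(l_1x_2x_3+l_2x_1x_3+l_3x_1x_2)$, and $x_0\mapsto x_0+\pi^n\sum_is_ix_i$. These are iterated and passed to the limit using completeness of $R$.
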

\subsection{\'{E}taleness of $\Phi$}
We will use the following criterion.
\begin{lem}
\label{Lemma. Etale criterion}
    Let $f:X\to Y$ be a morphism of schemes. Let $x\in X$ with image $y\in Y$. Assume
    \begin{enumerate}
        \item $Y$ is integral and geometrically unibranch at $y$;
        \item $f$ is locally of finite type;
        \item there is a specialization $x'\rightsquigarrow x$ such that $f(x')$ is the generic point of $Y$;
        \item $f$ is unramified at $x$.
    \end{enumerate}
    Then $f$ is \'{e}tale at $x$.
\end{lem}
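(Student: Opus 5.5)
The plan is to reduce to strict henselizations, where an unramified morphism becomes a surjection of local rings, and then use hypotheses (1) and (3) to show the kernel of that surjection is zero.

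\emph{Setup.} Set $\oR:=\CO_{Y,y}^{sh}$ and $\overline{S}:=\CO_{X,x}^{sh}$, the strict henselizations with respect to separable closures of the residue fields. Since $Y$ is integral, $\CO_{Y,y}$ is a domain. Since $\CO_{Y,y}\to\oR$ is ind-étale, $\oR$ is reduced. Geometric unibranchness of $Y$ at $y$ means $\oR$ has a unique minimal prime, so $\oR$ is a domain.

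\emph{Step 1: $\oR\to\overline{S}$ is surjective.} Since $f$ is locally of finite type and unramified at $x$, the local structure theorem for unramified morphisms (EGA IV 18.4.7, or the Stacks Project tag on étale-local structure of unramified maps) applies. It says that, after replacing $X$ by an open neighbourhood of $x$, there is an étale neighbourhood $(V,v)\to(Y,y)$ such that $x$ lies on a component of $X\times_YV$ that maps to $V$ by a closed immersion. Passing to strict henselizations, the natural map $\oR\to\overline{S}$ is surjective. Write $\overline{S}\cong\oR/I$.

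\emph{Step 2: $I=0$, which is the key step.}
\begin{enumerate}
    \item The specialization $x'\rightsquigarrow x$ gives a prime $\fm'\subset\CO_{X,x}$ corresponding to $x'$, lying over the generic point $(0)\subset\CO_{Y,y}$.
    \item Because $\CO_{X,x}\to\overline{S}$ is faithfully flat, there is a prime $\fh\subset\overline{S}$ lying over $\fm'$.
    \item Its preimage $\mathfrak{p}\subset\oR$ is a prime with $\mathfrak{p}\supset I$, and $\mathfrak{p}$ lies over $(0)\subset\CO_{Y,y}$.
    \item Since $\CO_{Y,y}\to\oR$ is flat, going-down shows that every prime of $\oR$ lying over the generic point of $\CO_{Y,y}$ is minimal: a strictly smaller prime would contract to a prime strictly smaller than $(0)$, which is impossible.
    \item Since $\oR$ is a domain, $\mathfrak{p}=(0)$, hence $I=0$ and $\oR\cong\overline{S}$.
\end{enumerate}

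\emph{Step 3: descent of étaleness.} Since $\oR\to\overline{S}$ is an isomorphism, it is flat. The map $\CO_{X,x}\to\overline{S}$ is faithfully flat, and the composite $\CO_{Y,y}\to\overline{S}$ equals $\CO_{Y,y}\to\oR\to\overline{S}$, which is flat. Therefore $\CO_{X,x}$ is flat over $\CO_{Y,y}$. So $f$ is flat at $x$, locally of finite type, and unramified at $x$, and hence $f$ is étale at $x$.

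The main obstacle is Step 1: invoking the correct form of the local structure theorem and checking that it produces a surjection on strict henselizations rather than only on henselizations. Here the requirement that residue fields are separably closed is exactly what makes the finite separable residue extension of an unramified map trivial.
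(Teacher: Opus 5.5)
Your overall route is sound: the \'etale local structure of unramified maps gives a surjection $\oR\to\overline{S}$, unibranchness makes $\oR$ a domain, hypothesis (3) kills the kernel, and flatness then descends. The paper gives no proof of its own and only cites \cite[Tag 0GS8]{stacks-project}, so this would be a self-contained replacement. However, Step 2(4) does not work as written.

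If $\mathfrak q\subsetneq\mathfrak p$, then $\mathfrak q\cap\CO_{Y,y}\subseteq\mathfrak p\cap\CO_{Y,y}=(0)$, so $\mathfrak q$ simply contracts to $(0)$ again. Contraction does not preserve strict inclusions, so there is no contradiction. Going-down, which is all that flatness gives you, proves the converse statement: minimal primes of $\oR$ lie over $(0)$. Flatness alone cannot give what you need. Take a local domain $(A,\fm_A)$ and $B=A[s]_{(\fm_A,s)}$. Then $B$ is flat over $A$, and the prime $(s)$ lies over $(0)$ because $A\to B\to B/(s)\cong A$ is the identity, yet $(0)\subsetneq(s)$. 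So the real content of the lemma sits in Step 2(4), not in Step 1, which is standard.

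The missing ingredient is that $\CO_{Y,y}\to\oR$ has zero-dimensional fibres. Since $\oR$ is a filtered colimit of \'etale $\CO_{Y,y}$-algebras, its generic fibre $\oR\otimes_{\CO_{Y,y}}K$ (with $K$ the fraction field of $\CO_{Y,y}$) is a filtered colimit of finite products of finite separable extensions of $K$. Hence it is zero-dimensional. Because $\oR$ is a domain containing $\CO_{Y,y}$, this generic fibre is the localization of $\oR$ at $\CO_{Y,y}\setminus\{0\}$, a zero-dimensional domain, hence a field. Therefore $(0)$ is the only prime of $\oR$ lying over $(0)$, which gives $\mathfrak p=(0)$ and $I=0$. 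With this repair the rest of your argument goes through.

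One smaller caveat concerns Step 3. In the stated non-Noetherian generality, ``flat and unramified at $x$'' yields ``\'etale at $x$'' only once $f$ is locally of finite presentation near $x$, since unramified only includes finite type. This is automatic in the paper's application, where everything is of finite type over $\Ik$. In general it follows from your Step 1 picture. Let $Z\subset X\times_Y V$ be the open piece through $x$ that is a closed subscheme of $V$ with ideal $J$. Step 2 gives $J\,\CO_{V,v}^{sh}=0$, hence $J_v=0$. After shrinking, $V$ is integral: it is reduced, $\CO_{V,v}$ is a subring of the domain $\oR$, and an affine $V$ \'etale over an affine open of $Y$ has only finitely many generic points. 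So $J=0$ near $v$, and $Z\to V$ is an open immersion there.
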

\begin{proof}
    See \cite[\href{https://stacks.math.columbia.edu/tag/0GS8}{Tag 0GS8}]{stacks-project}.
\end{proof}
We now verify the hypotheses of Lemma \ref{Lemma. Etale criterion} for $\Phi$ at $z_0$, thereby proving Proposition \ref{Proposition. Etaleness of Phi}.
\begin{lem}
\label{Lemma. Centered normal form}
Let $R$ be a complete DVR with fraction field $K$ and residue field $\kappa$. Let $(S,\frac19 C)\to\Spec R$ be a family in $\CY_1^{\cy}$ whose special fiber is the base change of $(X_0,\frac19D_0)$ to $\kappa$ and whose generic fiber is a smooth cubic surface with the sum of lines on it. After a projective change of coordinates over $R$, the family is pulled back from $(\CX_{U^{\circ}},\frac19\CD_{U^{\circ}})\to U^{\circ}$ by a morphism $\Spec R\to U^{\circ}$ mapping the closed point to the origin.
\end{lem}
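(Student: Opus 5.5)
The plan is to first realize the family inside $\IP^3_R$, then bring the cubic equation into the shape (\ref{Equation. Family on U}) by coordinate changes over $R$, and finally use the boundary to force the coefficients into the maximal ideal.

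\emph{Step 1: embedding.} Since every fiber has $K$ Cartier and $-K$ very ample with $H^1(-mK)=0$ (as in the proof of Proposition \ref{Proposition. Unique closed pt}), $f_*\omega_{S/R}^{-1}$ is free of rank $4$ and commutes with base change. Hence $S\subset\IP^3_R$ is a cubic hypersurface $F=0$. Because $\mathrm{PGL}_4(R)\to\mathrm{PGL}_4(\kappa)$ is surjective, after a coordinate change over $R$ the reduction of $F$ is $x_1x_2x_3$. Thus
\[
F=x_1x_2x_3-g,
\]
where $g$ is a cubic form with all coefficients in $\fm_R$. Since $\omega_{S/R}\cong\CO(-1)$ and $\CO(C)\cong\omega^{[-9]}$, I also record that $\CO(C)\cong\CO_S(9)$.

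\emph{Step 2: normal form by successive approximation.} The monomials of $g$ divisible by some $x_ix_j$ with $1\le i<j\le3$ can be absorbed into $x_1x_2x_3$. For example, a term $x_1x_2\ell$ is removed by $x_3\mapsto x_3+\ell$, followed by rescaling by a unit. Similarly, the monomials $x_0^2x_i$ are killed by translations $x_0\mapsto x_0+e_ix_i$. Each such substitution only creates new terms of strictly higher $\pi$-adic order among the bad monomials. Since $R$ is complete, the iterated coordinate change converges in $\mathrm{PGL}_4(R)$. I expect this convergence bookkeeping to be the main technical obstacle. What must be checked is that each step raises the minimal valuation of the bad coefficients, which should follow because all corrections are products of at least two elements of $\fm_R$ or of an element of $\fm_R$ with an already-small coefficient. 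After this step,
\[
F=x_1x_2x_3-\Big(a x_0^3+\sum_{i=1}^3(c_ix_0x_i^2+d_ix_i^3)\Big),\qquad a,c_i,d_i\in\fm_R.
\]
Moreover $a\neq0$: otherwise the point $[1:0:0:0]$ lies on $S_K$ and is singular there, contradicting smoothness of the generic fiber.

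\emph{Step 3: divisibility via the boundary.} I want $v(c_i)>v(a)$ and $v(d_i)>v(a)$. For this I repeat the computation in the proof of Lemma \ref{Lemma. barF deg 27} over $K$, with $t$ replaced by $a$ and $P_i,Q_i$ replaced by $c_i/a,d_i/a$. The $27$ lines on $S_{\overline K}$ lying in the chart $\mathscr U_{ij}$ have $x_0=\lambda x_i+\mu x_j+\cdots$, where $\lambda$ runs over the roots of $\lambda^3+(c_i/a)\lambda+d_i/a$ (up to terms involving $\alpha,\beta$, which are divisible by $a$). Their flat limits form $C_\kappa$, and $C_\kappa=D_0=9H_0$ is supported on $\{x_0=0\}$. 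Hence all roots must specialize to $\lambda=0$, that is, have positive valuation. The Newton polygon of $a\lambda^3+c_i\lambda+d_i$ then forces $v(c_i)>v(a)$ and $v(d_i)>v(a)$.

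\emph{Step 4: conclusion.} Setting $t=a$, $P_i=c_i/a$ and $Q_i=d_i/a$, all lying in $\fm_R$, defines a morphism $\Spec R\to U$ sending the closed point to the origin. The open set $U^{\circ}$ contains $\IV(t)$, hence the origin; since $R$ is local, the morphism lands in $U^{\circ}$. The pullback of $\CX$ is then $S$. The boundaries agree because both $C$ and the pullback of $\CD$ are relative Mumford divisors whose restriction to the generic fiber is the sum of the $27$ lines (Proposition \ref{Proposition. The family of bpCY on U0}(2)). Each is therefore the closure of its generic fiber, so they coincide.
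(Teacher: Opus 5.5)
Your Steps 1 and 4 are fine, and Step 1 supplies the embedding into $\IP^3_R$ that the paper uses implicitly. The genuine gap is the ordering: Step 2 removes the $x_0^2x_i$ terms before anything is known about their valuations, and that step fails. Step 3 also does not supply the missing input.

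\textbf{Step 2.} Let $\gamma_i\in\fm_R$ be the coefficient of $x_0^2x_i$, and keep your $a$ for the coefficient of $x_0^3$. The translation $x_0\mapsto x_0+e_ix_i$ changes $\gamma_i$ by $3ae_i$ up to higher-order corrections. So killing $\gamma_i$ needs $e_i\approx-\gamma_i/(3a)$, which lies in $R$ only if $v(\gamma_i)\ge v(a)$. Nothing at that stage gives this. Your heuristic that corrections are products of elements of $\fm_R$ does not apply, because the required correction is a quotient by $a$.

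No other coordinate change rescues this in general. Suppose $v(\gamma_1)=1$ and $a\in\fm^2$. Write any change over $R$ that keeps the reduction equal to $x_1x_2x_3$ as $h\gamma'$, where:
\begin{itemize}
\item $h$ lifts an automorphism of $X_0$, so $x_0\mapsto\lambda_0x_0+\sum_js_jx_j$ and $x_j\mapsto\lambda_jx_{\sigma(j)}$;
\item $\gamma'$ acts by $x_j\mapsto x_j+\pi\ell_j$.
\end{itemize}
Modulo $\pi^2$, $\gamma'$ leaves every $x_0^2x_k$-coefficient unchanged, since $\prod_j(x_j+\pi\ell_j)$ contributes to them only in order $\pi^2$. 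Meanwhile $h$ moves $\gamma_1$ to $\lambda_0^2\lambda_1\gamma_1$ on $x_0^2x_{\sigma(1)}$. So every transform keeps an $x_0^2x_k$-coefficient of valuation $1$, and your normal form is unreachable.

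Hence $v(\gamma_i)>v(a)$ is part of what the boundary hypothesis must supply. It has to be proved first, for the full ten-term form $dx_0^3+\sum_i(a_ix_i^3+b_ix_i^2x_0+c_ix_ix_0^2)$. This is the paper's order:
\begin{enumerate}
\item prove $v(d)<\min_i\{v(a_i),v(b_i),v(c_i)\}$;
\item set $t=d$;
\item only then remove the $x_ix_0^2$ terms by the $t$-scaled substitutions $x_0\mapsto x_0+\pi^n\sum_is_ix_i$ and $x_i\mapsto x_i+t\pi^nl_i$.
\end{enumerate}

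\textbf{Step 3.} This is where the boundary must enter, but as written the argument is circular. Over $K$ the lines in $\mathscr U_{ij}$ satisfy $\alpha=a\Psi_1$, $\beta=a\Psi_2$, $\Psi_0=\Psi_3=0$, with the $\Psi$'s built from $c_i/a$ and $d_i/a$. Calling $\alpha,\beta$ ``divisible by $a$'' has no valuative content until these ratios are known to be integral, and that is the conclusion. Lemma \ref{Lemma. barF deg 27} is proved only for integral $P_i,Q_i$. So nothing shows that every root of $a\lambda^3+c_i\lambda+d_i$ is, up to negligible error, the $\lambda$-coordinate of a line with integral coordinates. Without that, knowing that the limit lines lie in $\{x_0=0\}$ says nothing about the remaining roots.

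The paper makes this quantitative with a length count:
\begin{itemize}
\item The closure $F$ of $F_1(S_K)$ is finite flat of degree $27$. Since $C_\kappa=9(L_1+L_2+L_3)$, it has length $9$ at each $[L_k]$.
\item Let $q_i(T)=dT^3+c_iT^2+b_iT+a_i$ and let $e_i$ be the minimal valuation of its coefficients, ordered so that $e_1\le e_2\le e_3$. One shows $\alpha,\beta\in\pi^{e_1}B_3$.
\item Hence $\overline{B_3}$ is a quotient of $\kappa[\lambda,\mu]/(\lambda^{r_1},\mu^{r_2})$, where $r_i$ is the order at $0$ of $\overline{\pi^{-e_i}q_i}$.
\item Then $9\le r_1r_2$ forces $r_1=r_2=3$, which is exactly the strict inequality. $B_2$ handles the third index.
\end{itemize}
With that in hand, your Vieta/Newton-polygon conclusion and Step 4 go through.
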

\begin{proof}
    Let $\fm$ be the maximal ideal and $\pi$ be a uniformizer of $R$. After multiplying by a unit, we may write the defining equation of $S$ as $x_1x_2x_3=\pi H$, where $H\in R[x_0,x_1,x_2,x_3]_3$ is a degree 3 homogeneous polynomial. For linear forms $l_1,l_2,l_3\in R[x_0,x_1,x_2,x_3]_1$, one has
    \[
        (x_1+\pi^n l_1)(x_2+\pi^n l_2)(x_3+\pi^n l_3)\equiv x_1x_2x_3+\pi^n(l_1x_2x_3+l_2x_1x_3+l_3x_1x_2)\bmod{\fm^{n+1}}.
    \]
    Thus inductively we can construct projective linear transforms $\{f_n\}_{n=1}^{\infty}$ such that, after applying the composition $g_n:=f_nf_{n-1}\cdots f_1$, the defining equation of $S$ is of the form
 \begin{equation}
 \label{Equation. No mixed terms}
     x_1x_2x_3=d^{(n)}x_0^3+\sum_{i=1}^3(a^{(n)}_ix_i^3+b^{(n)}_ix_i^2x_0+c^{(n)}_ix_ix_0^2)+\pi^{n}H_n, 
 \end{equation}
 where $H_n\in R[x_0,x_1,x_2,x_3]_3$ and $d^{(n)},a^{(n)}_i,b^{(n)}_i,c^{(n)}_i\in\fm$. We may choose $f_1$ to be the identity map. Thus $g_n\in M_4(R/\fm^{n})$ and 
 \[
   g_{n+1}\equiv g_n\bmod{\fm^n},\quad a_i^{(n+1)}\equiv a_i^{(n)},b_i^{(n+1)}\equiv b_i^{(n)},c_i^{(n+1)}\equiv c_i^{(n)},d^{(n+1)}\equiv d^{(n)}\bmod{\fm^n}. 
 \]
 Since $R$ is complete, we obtain $$g:=\varprojlim g_n\in M_4(R),\quad a_i:=\varprojlim a_i^{(n)},b_i:=\varprojlim b_i^{(n)},c_i:=\varprojlim c_i^{(n)},d:=\varprojlim d^{(n)}\in\fm.$$
 Since $g\equiv \rm{id}\bmod{\fm}$, we have $\det(g)\notin\fm$. Then $g\in M_4(R)$ is invertible, thus defines an element of $\mathrm{PGL}_4(R)$. Therefore, after applying $g$, the defining equation of $S$ is of the form
 \begin{equation}
 \label{Equation. Equation after first transf}
      x_1x_2x_3=dx_0^3+\sum_{i=1}^3(a_ix_i^3+b_ix_i^2x_0+c_ix_ix_0^2),
 \end{equation}
 where $a_i,b_i,c_i,d\in\fm$. Let $v$ be the discrete valuation determined by the DVR $R$.
 \begin{claim}
     $v(d)<\min_{1\leq i\leq 3}\{v(a_i),v(b_i),v(c_i)\}$.
 \end{claim} 

\begin{proof}[Proof of Claim]
     Let $F_1(S_K)\subset\Gr(2,4)_K$ be the Fano scheme of lines on the smooth cubic surface $S_K$ and $F\subset\Gr(2,4)_R$ be its scheme-theoretic closure. Then $F\to \Spec R$ is finite flat of degree 27. Write $L_i=\IV(x_0,x_i)\subset X_0$ for $1\leq i\leq 3$, then $C_{\kappa}=9\sum_iL_i$. Thus $F_{\kappa}$ is supported at the three points $[L_i](1\leq i\leq 3)$, with length 9 at each. Set
     \[
         q_i(T)=a_i+b_iT+c_iT^2+dT^3,\quad e_i=\min\{v(a_i),v(b_i),v(c_i),v(d)\},\quad 1\leq i\leq 3.
     \]
     Then all of the $q_i$'s are nonzero. Indeed, if $q_1(T)=0$, then a direct computation shows that $S_K$ is singular at $[0:1:0:0]$. The same result holds for $q_2$ and $q_3$. Since $R$ is complete, $F\cong\bigsqcup_i\Spec B_i$ where $\Spec B_i$ is the part specializing to $[L_i]$. Thus $B_i$ is a finite flat local $R$-algebra of rank 9. Now we assume that $1\leq e_1\leq e_2\leq e_3$. Note that $\Spec B_3$ lies in the chart $\mathscr{U}_{12}\subset \Gr(2,4)_R$ whose lines are written as
 \[
     x_3=\alpha x_1+\beta x_2,\quad x_0=\lambda x_1+\mu x_2,
 \]
 and $\overline{B_3}:=B_3/\pi B_3$ is supported at $\alpha=\beta=\lam=\mu=0$. Substituting the equations into (\ref{Equation. Equation after first transf}), the coefficients of $x_1^2x_2$ and $x_1x_2^2$ give $\alpha,\beta\in\pi^{e_1}B_3$. The coefficients of $x_1^3$ and $x_2^3$ give
 \begin{equation}
 \label{Equation. two eqs in claim}
 \begin{split}
     q_1(\lambda)+c_3\alpha\lam^2+b_3\alpha^2\lam+a_3\alpha^3=0,\\
     q_2(\mu)+c_3\beta\mu^2+b_3\beta^2\mu+a_3\beta^3=0.
\end{split}
\end{equation}
Dividing the two equations in (\ref{Equation. two eqs in claim}) by $\pi^{e_1}$ and $\pi^{e_2}$ respectively, and passing to $\overline{B_3}$, we obtain 
\[
     A_1(\lam)=A_2(\mu)=0\text{ in }\overline{B_3},\quad A_i(T):=\overline{\pi^{-e_i}q_i(T)}\in\kappa[T]\backslash\{0\}.
\]
Let $r_i$ be the multiplicity of 0 as a root of $A_i$, for $i=1,2$.  Since $\lam$ and $\mu$ are nilpotent in $\overline{B_3}$, these relations imply that $1\leq r_i\leq 3$ and $\lam^{r_1}=\mu^{r_2}=0$. Moreover, since  $\alpha=\beta=0$ in $\overline{B_3}$, we have a surjection $\kappa[\lam,\mu]/(\lam^{r_1},\mu^{r_2})\twoheadrightarrow\overline{B_3}$. Thus $9=\dim_{\kappa}\overline{B_3}\leq r_1r_2\leq 9$, so $r_1=r_2=3$. Hence
\[
    e_1=e_2=v(d)<\min_{i=1,2}\{v(a_i),v(b_i),v(c_i)\}.
\]
Since $e_2\leq e_3\leq v(d)$, we also have $e_3=v(d)$. Repeating the same argument with $B_3$ replaced by $B_2$, we can prove the strict inequality for $a_3,b_3,c_3$. Therefore the claim is proved.
\end{proof}
Set $t=d$ and $\gamma_i=\frac{c_i}{d},P_i=\frac{b_i}{d},Q_i=\frac{a_i}{d}$, then (\ref{Equation. Equation after first transf}) can be written as
 \begin{equation}
 \label{Equation. After absorb d}
     x_1x_2x_3=t\big(x_0^3+\sum_{i=1}^3(\gamma_ix_ix_0^2+P_ix_i^2x_0+Q_ix_i^3)\big). 
 \end{equation}
Now we remove the $x_ix_0^2(1\leq i\leq 3)$ terms in (\ref{Equation. After absorb d}) inductively as before. At the $n$-th step, make the coordinate changes
\[
    x_0\mapsto x_0+\pi^n(s_1x_1+s_2x_2+s_3x_3),\quad (x_1,x_2,x_3)\mapsto (x_1+t\pi^nl_1,x_2+t\pi^nl_2,x_3+t\pi^nl_3),
\]
where $l_i\in R[x_0,x_1,x_2,x_3]_1$ are linear forms and $s_i\in R$. Modulo $t\fm^{n+1}$, the defining polynomial changes by 
\[
    t\pi^n\big(l_1x_2x_3+l_2x_1x_3+l_3x_1x_2-3(s_1x_1+s_2x_2+s_3x_3)x_0^2\big).
\]
These terms allow us to remove $x_ix_0^2(1\leq i\leq 3)$ terms and all terms divisible by $x_ix_j(1\leq i<j\leq 3)$ except $x_1x_2x_3$ terms at that order. Absorbing changes into remaining coefficients, we continue inductively. With the same argument as before, completeness of $R$ yields $h\in \mathrm{PGL}_4(R)$, changing (\ref{Equation. After absorb d}) into the form
 \[
    S':=(h\circ g)(S):\quad x_1x_2x_3=t\big(x_0^3+\sum_{i=1}^3(P_i'x_i^2x_0+Q_i'x_i^3)\big).
 \]
Thus $S'\to \Spec R$ induces a morphism $\Spec R\to U^{\circ}$. By pulling back $(\CX_{U^{\circ}},\frac19\CD_{U^{\circ}})\to U^{\circ}$ along it, we obtain a family $(\CX_{R},\frac19\CD_R)\to \Spec R$ with the generic fiber $\CX_K= S'_K$. Since $(h\circ g)(C)$ and $\CD_R$ are flat closed subschemes of $S'=\CX_R$ with the same generic fiber, $(h\circ g)(C)=\CD_R$. This proves the assertion. 
\end{proof}
Write $H=\Aut(X_0)$. Let $G^{\circ}$ and $H^{\circ}$ be the identity components of $G$ and $H$ respectively. Then
 \[
    H^{\circ}/G^{\circ}=\big\{n(s_1,s_2,s_3):x_0\mapsto x_0+\sum_{i=1}^3s_ix_i|s_i\in\Ik\big\}.
\]
 For a point $u\in \IV(t)$, the boundary divisor $\CD_u$ of $\CX_u\cong X_0$ is determined by three closed points (with multiplicities) on each double curve $\D_i$, which are the zeros of the cubic polynomials
\[
   T_i:=r_i^3+P_i(u)r_i+Q_i(u),\quad i=1,2,3.
\]
Here $r_i=x_0/x_i$. This yields a well-defined morphism
\[
  \Xi:  H^{\circ}\times^{G^{\circ}}\IV(t)\longrightarrow\prod_i\Hilb^3(\D_i),\quad [n(s_1,s_2,s_3),u]\mapsto (Z(n(s_1,s_2,s_3)\cdot T_i))_i.
\]
Here $Z(T)$ denotes the zero subscheme of a polynomial $T$. 
\begin{lem}
    \label{Lemma. Xi is open immersion}
    The morphism $\Xi$ is an open embedding.
\end{lem}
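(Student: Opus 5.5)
The plan is to trivialize the twisted product and then reduce $\Xi$ to three copies of the classical Tschirnhaus normalization of a monic cubic.

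\emph{Step 1: trivialize the source.} Write $N\subset H^{\circ}$ for the unipotent subgroup $\{n(s_1,s_2,s_3)\}\cong\IG_a^3$. It is normal in $H^{\circ}$ and satisfies $H^{\circ}=N\rtimes G^{\circ}$, using the description of $H^{\circ}/G^{\circ}$ above together with the shape of the automorphisms in the proof of Lemma \ref{Lemma. Automorphism group}. Hence the map
\[
N\times\IV(t)\longrightarrow H^{\circ}\times^{G^{\circ}}\IV(t),\quad (n,u)\mapsto[n,u],
\]
is an isomorphism. Therefore the source is isomorphic to $\IA^3_{s}\times\IA^6_{(P_i,Q_i)}$, a smooth irreducible variety of dimension $9$. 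The target $\prod_i\Hilb^3(\D_i)\cong(\IP^3)^3$ also has dimension $9$.

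\emph{Step 2: split $\Xi$ into factors.} On $\D_i=\{x_j=x_k=0\}$ the linear form $x_0+\sum_l s_lx_l$ restricts to $x_0+s_ix_i$. So the $i$-th component of $\Xi$ depends only on $(s_i,P_i,Q_i)$, and $\Xi=\prod_i\xi_i$ with
\[
\xi_i:\IA^3\longrightarrow\Hilb^3(\D_i)=\IP(\Ik[x_0,x_i]_3),\quad (s,P,Q)\mapsto Z\big((x_0+sx_i)^3+P(x_0+sx_i)x_i^2+Qx_i^3\big).
\]
Up to the sign convention for how $n$ acts on polynomials, which only replaces $s$ by $-s$, this is the claimed map. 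A product of open embeddings is an open embedding, so it suffices to treat a single $\xi_i$.

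\emph{Step 3: the single factor.} The binary cubic above has $x_0^3$-coefficient $1$. Hence $\xi_i$ lands in the open affine chart $W_i\cong\IA^3$ of cubics with nonzero $x_0^3$-coefficient, i.e.\ length-$3$ subschemes of $\D_i$ avoiding $p=[1:0]$. Normalize the $x_0^3$-coefficient to $1$ and write points of $W_i$ as $x_0^3+a x_0^2x_i+b x_0x_i^2+c x_i^3$. In these coordinates
\[
\xi_i(s,P,Q)=\big(3s,\;3s^2+P,\;s^3+Ps+Q\big).
\]
This map has the polynomial inverse
\[
s=a/3,\quad P=b-a^2/3,\quad Q=c-ab/3+2a^3/27,
\]
which uses $\mathrm{char}\,\Ik=0$. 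Thus $\xi_i:\IA^3\to W_i$ is an isomorphism, so $\Xi$ is an isomorphism onto the open subset $\prod_iW_i$.

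\emph{Main obstacle.} The only delicate point is Step 1. I must check that the $G^{\circ}$-action on $\IV(t)$ defining the twisted product is compatible with the action on binary cubics, so that $\Xi$ is well defined and $N\times\IV(t)\to H^{\circ}\times^{G^{\circ}}\IV(t)$ is really an isomorphism. A scaling $\lambda_i$ acting by $P_i\mapsto\lambda_i^{-2}P_i$ and $Q_i\mapsto\lambda_i^{-3}Q_i$ changes $T_i$ only by an overall scalar, so it fixes $Z(T_i)$, and conjugating $n(s)$ by $G^{\circ}$ rescales the $s_i$. Given this equivariance, the semidirect decomposition gives the trivialization, and the rest is the explicit computation in Step 3.
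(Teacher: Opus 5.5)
Your proposal is correct and follows essentially the same route as the paper: identify the open locus of $\Hilb^3(\D_i)$ of subschemes avoiding $[1:0]$ with $\IA^3$ via monic cubics, compute $\Xi$ in these coordinates as $(\pm 3s_i,\,P_i+3s_i^2,\,\dots)$, and observe that this map is invertible in characteristic $0$. Your explicit trivialization $H^{\circ}\times^{G^{\circ}}\IV(t)\cong N\times\IV(t)$ and your explicit polynomial inverse only spell out steps the paper uses implicitly.
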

\begin{proof}
    Set $\CH_i:=\{Z\in\Hilb^3(\D_i)|[1:0]\notin\Supp(Z)\}$. It is an open subscheme of $\Hilb^3(\D_i)$ and $\Image(\Xi)\subset\prod_i\CH_i$. There is an isomorphism
    \begin{equation}
    \label{Equation. Hi iso to A3}
        \IA^3_{\Ik}\longrightarrow\CH_i,\quad (\alpha_i,\beta_i,\gamma_i)\mapsto Z(r_i^3+\alpha_ir_i^2+\beta_ir_i+\gamma_i).
    \end{equation}
    The equation of $n(s_1,s_2,s_3)\cdot T_i$ is 
    \[
        (r_i-s_i)^3+P_i(r_i-s_i)+Q_i=r_i^3-3s_ir_i^2+(P_i+3s_i^2)r_i+(Q_i-P_is_i-s_i^3).
    \]
    Thus under the isomorphism (\ref{Equation. Hi iso to A3}), $\Xi$ is defined by 
    \[
        \Xi(n(s_1,s_2,s_3),u)=(-3s_i,P_i+3s_i^2,Q_i-P_is_i-s_i^3).
    \]
    Since $\Ik$ is algebraically closed of characteristic 0, $\Xi$ is an isomorphism onto $\prod_i\CH_i$. In particular, $\Xi$ is an open embedding.
\end{proof}
Let $V=H^0(\IP^3,\CO_{\IP^3}(3))$ and $\rho:A\to \IP(V)$ be the natural morphism forgetting the boundary divisors. Let $\fh,\fg$ be the Lie algebras of $H,G$ respectively.  
\begin{lem}
\label{Lemma. Kernel of rho o Phi}
With the above notation, we have an isomorphism
    $$\ker d(\rho\circ\Phi)_{z_0}\cong(\fh\oplus T_0\IV(t))/\fg.$$
\end{lem}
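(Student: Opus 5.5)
We first make the composite explicit. The map $\rho\circ\Phi:\mathrm{PGL}_4\times^G U^{\circ}\to\IP(V)$ sends $[g,u]$ to the cubic surface $g\cdot\CX_u$. We compute its differential at $z_0=[1,0]$. The tangent space is
\[
T_{z_0}(\mathrm{PGL}_4\times^GU^{\circ})\cong(\mathfrak{pgl}_4\oplus T_0U)/\fg,
\]
where $\fg$ embeds via $\xi\mapsto(\xi,-\xi\cdot 0)$. Since $0$ is $G$-fixed, the infinitesimal action of $\fg$ on $T_0U$ vanishes, so this is just the diagonal quotient.

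Write $F_u=x_1x_2x_3-tP$. Then $d(\rho\circ\Phi)_{z_0}(\xi,v)$ is the class of $\xi\cdot(x_1x_2x_3)+dF(v)$ in $T_{[X_0]}\IP(V)=V/\Ik\, x_1x_2x_3$. Split $T_0U=T_0\IV(t)\oplus\Ik\,\partial_t$. On $T_0\IV(t)$ we have $dF=0$, because $F_u=x_1x_2x_3$ along $\IV(t)$. The $\partial_t$ direction contributes $-P_0=-x_0^3$. So a vector $(\xi,v_{\IV},c\,\partial_t)$ lies in the kernel if and only if
\[
\xi\cdot(x_1x_2x_3)-c\,x_0^3\in\Ik\,x_1x_2x_3 .
\]

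Next we analyse this condition. Take $\xi\in\mathfrak{gl}_4$ acting by derivations $x_a\mapsto\sum_b\xi_{ab}x_b$. Then $\xi\cdot(x_1x_2x_3)$ lies in the span of monomials divisible by at least two of $x_1,x_2,x_3$, namely $x_jx_k x_b$. In particular it has no $x_0^3$ term, so $c=0$.

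The remaining condition says exactly that $\xi$ preserves $X_0$ to first order, i.e.\ $\xi$ lies in the Lie algebra of the stabilizer of $[x_1x_2x_3]$ in $\mathrm{PGL}_4$. We check this is $\fh=\mathrm{Lie}(H)$ directly. The coefficient of $x_jx_kx_b$ for $b\neq i$ forces $\xi_{ib}=0$ for $b\notin\{0,i\}$ and $\xi_{i0}=0$, with no constraint on row $0$. This recovers precisely the upper-triangular shape from Lemma \ref{Lemma. Automorphism group}, so infinitesimal and scheme-theoretic stabilizers agree.

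Hence the kernel is $\{(\xi,v):\xi\in\fh,\ v\in T_0\IV(t)\}$ modulo the diagonal $\fg$, which is $(\fh\oplus T_0\IV(t))/\fg$. The map is injective since $\fg\subset\fh$ and $\fg$ acts trivially on $T_0\IV(t)$. The main care points are bookkeeping, not a real obstacle:
\begin{itemize}
\item the identification of the tangent space of the contracted product, including the sign convention for $\fg$;
\item verifying that the $\partial_t$ direction really contributes the non-trivial class $x_0^3$ modulo the image of $\mathfrak{pgl}_4$.
\end{itemize}
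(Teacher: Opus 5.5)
Your proposal is correct and matches the paper's proof: compute the differential of $(g,u)\mapsto g\cdot[F_u]$ at $(1,0)$, note that $T_0\IV(t)$ maps to zero while $\partial_t$ contributes $-x_0^3$, which cannot be cancelled by $\xi\cdot(x_1x_2x_3)\in(x_1x_2,x_1x_3,x_2x_3)$, conclude the kernel is $\fh\oplus T_0\IV(t)$, and quotient by $\fg$. You only spell out the monomial check and the tangent space of $\mathrm{PGL}_4\times^G U^{\circ}$ more explicitly. One wording fix: $\fg$ does not act trivially on $T_0\IV(t)$, since the linearized action has nonzero weights; what you actually use, correctly, is that the fundamental vector fields of $\fg$ vanish at the $G$-fixed point $0$, so $\fg$ sits inside $\mathfrak{pgl}_4\oplus T_0U$ as $\fg\oplus 0$.
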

\begin{proof}
     Consider the natural morphism $$T:\mathrm{PGL}_4\times U^{\circ}\to \IP(V),(g,u)\mapsto g\cdot[F_u],$$
    where $F=x_1x_2x_3-tP$ and $F_u$ is the fiber over $u\in U^{\circ}$. An element $\xi\in\mathfrak{pgl}_4$ is represented by
    \[
        x_i\mapsto x_i+\varepsilon l_i,\quad \varepsilon^2=0,\quad0\leq i\leq 3,
    \]
    where $l_i(0\leq i\leq3)$ are linear forms. A direct computation gives
    \[
        dT_{(1,0)}:\mathfrak{pgl}_4\oplus \Ik\partial_t\oplus T_0\IV(t)\longrightarrow T_{[F_0]}\IP(V)\cong V/\Ik F_0,\quad (\xi,a,v)\mapsto [x_2x_3l_1+x_1x_3l_2+x_1x_2l_3-ax_0^3].
    \]
    Here $F_0=x_1x_2x_3$. Then $(\xi,a,v)\in \ker(dT_{(1,0)})$ if and only if $\xi\in\fh$ and $a=0$. Therefore 
    $$\ker(dT_{(1,0)})= \fh\oplus T_0\IV(t).$$ 
    Quotienting by $\fg$ gives the claimed isomorphism.
\end{proof}

\begin{lem}
\label{Lemma. Unramified}
The morphism 
\[
\Phi: \mathrm{PGL}_4\times^{G}U^{\circ}\longrightarrow A
\] 
is unramified at $z_0$.
\end{lem}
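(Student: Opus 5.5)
Since $\Phi$ is locally of finite type between finite type $\Ik$-schemes and $z_0$, $y_0$ are $\Ik$-points, it suffices to show that the differential $d\Phi_{z_0}:T_{z_0}(\mathrm{PGL}_4\times^GU^{\circ})\to T_{y_0}A$ is injective. We factor through the forgetful map $\rho:A\to\IP(V)$. A tangent vector $w\in\ker d\Phi_{z_0}$ satisfies $d(\rho\circ\Phi)_{z_0}(w)=0$. By Lemma \ref{Lemma. Kernel of rho o Phi}, it is therefore represented by a class in $(\fh\oplus T_0\IV(t))/\fg$. Via the exact sequence $0\to\fg\to\fh\to\fh/\fg\to0$, this class is identified with a tangent vector to $H^{\circ}\times^{G^{\circ}}\IV(t)$ at $[1,0]$. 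Concretely, $\fh/\fg$ is the Lie algebra of the unipotent group $\{n(s_1,s_2,s_3)\}$. This reduces the problem to the sublocus where the surface stays $X_0$ to first order and only the boundary moves.

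On this sublocus the composite of $\Phi$ with the boundary data factors through $\Xi$. We make this precise as follows. Restrict $\Phi$ to the closed subscheme $H^{\circ}\times^{G^{\circ}}\IV(t)\hookrightarrow\mathrm{PGL}_4\times^GU^{\circ}$. Its image lies in the fiber $\rho^{-1}([F_0])$, which parametrizes boundary divisors on $X_0$. By Proposition \ref{Proposition. The family of bpCY on U0}(3), the boundary over $[n,u]$ is $n\cdot(J_{12}(u)+J_{13}(u)+J_{23}(u))$. This divisor determines the triple of length-three subschemes $(Z(n\cdot T_i))_i$ on the double curves: intersect $\CD_u$ with $\D_i$, or equivalently read off the endpoints of the line families $J_{ij}$, taking care of multiplicities. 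We would do this by constructing a morphism from a neighborhood of $y_0$ in $\rho^{-1}([F_0])$ (or from the first-order thickening) to $\prod_i\Hilb^3(\D_i)$ whose composite with $\Phi$ is $\Xi$. One way is to take the scheme-theoretic restriction $\CD|_{\D_i}$, which is a relative Cartier divisor of degree $9$ on $\D_i$. It equals $3Z_i$ because each point of $Z_i$ is hit by three lines from each adjacent plane component. The cube-root step is awkward, so instead we would use the divisor $\CD\cap H_k$ and the Fano-type line data, or pass through the $\oF$ construction of Lemma \ref{Lemma. barF deg 27}. By Lemma \ref{Lemma. Xi is open immersion}, $\Xi$ is an open embedding, so $d\Xi$ is injective. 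Hence the vector $w$, which lies in $T(H^{\circ}\times^{G^{\circ}}\IV(t))$ and maps to $0$, must be $0$.

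The main obstacle is this factorization on tangent spaces. We must show that a first-order deformation of the pair with $X_0$ fixed determines a first-order deformation of the $Z_i$ compatibly with $\Xi$, including at non-reduced points where $Z_i(0)$ is the triple point $r_i^3=0$. We would first try to define the map via $9\,\CD\mapsto$ the divisor $\CD|_{\D_i}$ of degree $9$, composed with the morphism $\Hilb^3(\D_i)\to\Hilb^9(\D_i)$, $Z\mapsto 3Z$. This morphism is a closed immersion in characteristic $0$, since on the affine coordinates $(\alpha,\beta,\gamma)$ the cube of a monic cubic determines it. Checking that $\CD_u|_{\D_i}=3Z_i(u)$ scheme-theoretically in families over $H^{\circ}\times^{G^{\circ}}\IV(t)$ then finishes the proof.
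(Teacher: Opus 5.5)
Your argument is essentially the paper's: reduce via Lemma~\ref{Lemma. Kernel of rho o Phi} to tangent vectors of $H^{\circ}\times^{G^{\circ}}\IV(t)$ inside the fiber $A_0=\rho^{-1}([F_0])$, then use the restriction map $\mathrm{Res}\colon A_0\to\prod_i\Hilb^9(\D_i)$ and the identity $\mathrm{Res}\circ\Phi_0\circ\iota=\prod_i\mu_i\circ\Xi$, where $\mu_i\colon Z\mapsto 3Z$ is unramified at $[3p_i]$ (it is even a closed immersion in characteristic $0$, as you say) and $\Xi$ is an open embedding. The family-level compatibility you flag as the main obstacle is automatic: $H^{\circ}\times^{G^{\circ}}\IV(t)$ is reduced and $\prod_i\Hilb^9(\D_i)$ is separated, so it suffices to check $\CD_u|_{\D_i}=3Z_i(u)$ at closed points, which follows from Proposition~\ref{Proposition. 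The family of bpCY on U0}(3); the detours through cube roots or $\overline{F}$ are unnecessary.
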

\begin{proof}
Write $Y:=\mathrm{PGL}_4\times^{G}U^{\circ}$ and let $\Spec\Ik\hookrightarrow\IP(V)$ be the inclusion mapping to $[x_1x_2x_3]$. By pulling back $\rho\circ\Phi$ along it, we obtain the following Cartesian diagram
\[\begin{tikzcd}
	{Y_0} & {A_0} & {\Spec\Ik} \\
	{Y=\mathrm{PGL}_4\times^{G}U^{\circ}} & A & {\IP(V)}
	\arrow["{\Phi_0}", from=1-1, to=1-2]
	\arrow[hook, from=1-1, to=2-1]
	\arrow[from=1-2, to=1-3]
	\arrow[hook, from=1-2, to=2-2]
	\arrow[hook, from=1-3, to=2-3]
	\arrow["\Phi", from=2-1, to=2-2]
	\arrow["\rho", from=2-2, to=2-3]
\end{tikzcd}\]
Since 
 \[
     \ker(d\Phi)_{z_0}\subset\ker d(\rho\circ\Phi)_{z_0}\cong T_{z_0}Y_0,
 \]
 it suffices to prove that $\Phi_0$ is unramified at $z_0$. Since $H^\circ\times^{G^{\circ}}\IV(t)\cong H\times^{G}\IV(t)$ is a closed subscheme of $Y$ and its image under $\rho\circ\Phi$ is $[x_1x_2x_3]\in\IP(V)$, it follows that $H^\circ\times^{G^{\circ}}\IV(t)$ is a closed subscheme of $Y_0$. Denote the closed immersion by  $\iota:H^\circ\times^{G^{\circ}}\IV(t)\hookrightarrow Y_0$. The differential map at $(1,0)$
\[
    (d\iota)_{(1,0)}:(\fh\oplus T_0\IV(t))/\fg\longrightarrow T_{z_0}Y_0
\]
is injective. By $T_{z_0}Y_0\cong\ker d(\rho\circ\Phi)_{z_0}$ and Lemma \ref{Lemma. Kernel of rho o Phi}, it is an isomorphism. Consequently, it suffices to prove that $\Phi_0\circ\iota$ is unramified at $(1,0)$. 

For $1\leq i\leq 3$, let $\iota_i:\D_i\hookrightarrow X_0$ be the inclusion of the double curve of $X_0$  and $p_i$ be the intersection point $\D_i\cap(x_0=0)$. Restricting the tautological family on $A$ to $A_0$, we obtain a family $(\CS,\frac19\CC)\to A_0$ with $\CS\cong X_0\times A_0$. Consider the Cartier pullback $\CC_i:=(\iota_i\times\mathrm{id}_{A_0})^*\CC\subset \D_i\times A_0$. Then $(\CC_i)_{y_0}=D_0|_{\D_i}=9p_i$. Hence after shrinking $A_0$ around $y_0$, $\CC_i\to A_0$ is finite flat of degree 9. This induces a morphism
\[
    \mathrm{Res}:A_0\longrightarrow\prod_i\Hilb^9(\D_i),\quad a\mapsto ((\CC_i)_a)_i.
\]
Then we have the following commutative diagram
\[\begin{tikzcd}
	{H^{\circ}\times^{G^{\circ}}\IV(t)} & {A_0} \\
	{\prod_{i}\Hilb^3(\D_i)} & {\prod_{i}\Hilb^9(\D_i)}
	\arrow["{\Phi_0\circ\iota}", from=1-1, to=1-2]
	\arrow["{\Xi}", hook ,from=1-1, to=2-1]
	\arrow["{\mathrm{Res}}", from=1-2, to=2-2]
	\arrow["{\prod_i\mu_i}", from=2-1, to=2-2]
\end{tikzcd}\]
Here $\mu_i:\Hilb^3(\D_i)\to\Hilb^9(\D_i),Z\mapsto 3Z$, which is unramified at $[3p_i]$ by Lemma \ref{Lemma. Differential of 3pts to 9pts}. The morphism $\Xi$ is an open embedding by Lemma \ref{Lemma. Xi is open immersion}. Hence $\Phi_0\circ\iota$ is unramified at $(1,0)$. This proves the lemma.
\end{proof}

\begin{lem}
\label{Lemma. Differential of 3pts to 9pts}
Let $p\in\IP^1$ be a closed point. For any positive integers $m,n$, the morphism
\[
    \mu:\Hilb^n(\IP^1)\to\Hilb^{mn}(\IP^1),\quad [Z]\mapsto [mZ]
\]
is unramified at the point $[np]$.
\end{lem}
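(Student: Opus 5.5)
We reduce to a computation in local coordinates on the Hilbert schemes. Since $\mu$ is equivariant for $\mathrm{PGL}_2$, we may assume $p=[0:1]$, that is, the point $r=0$ in the affine coordinate $r$ on $\IA^1=\IP^1\setminus\{[1:0]\}$. Both $[np]$ and $[mnp]$ lie in the open subschemes of $\Hilb^n(\IP^1)$ and $\Hilb^{mn}(\IP^1)$ consisting of subschemes avoiding $[1:0]$. As in (\ref{Equation. Hi iso to A3}), these open subschemes are identified with affine spaces of monic polynomials:
\[
\IA^n\cong\{r^n+a_1r^{n-1}+\cdots+a_n\},\qquad \IA^{mn}\cong\{r^{mn}+b_1r^{mn-1}+\cdots+b_{mn}\}.
\]
In these coordinates $\mu$ is the polynomial map $f\mapsto f^m$, because the ideal of $mZ$ is the $m$-th power of the ideal of $Z$. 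The point $[np]$ corresponds to $f_0=r^n$. Being unramified at a $\Ik$-point of a morphism of finite type $\Ik$-schemes is equivalent to injectivity of the differential on tangent spaces. So it suffices to show that $d\mu_{f_0}$ is injective.

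We compute the differential with dual numbers. Let $g=a_1r^{n-1}+\cdots+a_n$ be a tangent vector, a polynomial of degree $<n$. Then
\[
(r^n+\varepsilon g)^m=r^{mn}+\varepsilon\, m\, r^{n(m-1)}g,
\]
so $d\mu_{f_0}(g)=m\,r^{n(m-1)}g$, viewed in the tangent space of polynomials of degree $<mn$. Multiplication by $r^{n(m-1)}$ is injective on polynomials, and $m\neq0$ in $\Ik$ because $\mathrm{char}\,\Ik=0$. Hence $d\mu_{f_0}$ is injective, and $\mu$ is unramified at $[np]$.

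The step requiring the most care is justifying that $\mu$ really is $f\mapsto f^m$ in these coordinates. For this we define $\mu$ functorially: for a flat family of length $n$ subschemes $Z\subset\IP^1_T$ with ideal sheaf $I$, set $mZ=V(I^m)$. Over the affine chart, $I$ is principal, generated by a monic polynomial $f$ of degree $n$ with coefficients in $\CO_T$. Then $I^m=(f^m)$ is generated by a monic polynomial of degree $mn$, so $V(I^m)$ is flat of length $mn$ over $T$. This gives the morphism and its coordinate description, including over the dual numbers $T=\Spec\Ik[\varepsilon]$ used above. The remaining steps are routine.
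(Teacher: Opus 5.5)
Your proposal is correct and follows essentially the same route as the paper. Both pass to the affine chart of monic polynomials, write a tangent vector at $r^n$ as $r^n+\varepsilon h(r)$, and use $(r^n+\varepsilon h)^m=r^{mn}+\varepsilon\, m\, r^{(m-1)n}h$ to get injectivity of $d\mu_{[np]}$. Your functorial definition of $\mu$ via $I^m$, the tangent-space criterion for unramifiedness, and the remark that $m\neq 0$ in characteristic $0$ spell out details the paper leaves implicit.
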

\begin{proof}
    Write the homogeneous coordinates of $\IP^1$ as $[x:y]$ and $p=[1:0]$. Set $r=y/x$ around $p$. Under the isomorphism
    \[
        T_{[np]}\Hilb^n(\IP^1)\cong\Ik[r]/(r^n),
    \]
    a tangent vector $\xi$ at $[np]$ is represented by $r^n+\vep h(r)$ with $\deg h<n$ and $\vep^2=0$. Since $$(r^n+\vep h(r))^m=r^{mn}+\vep mr^{(m-1)n}h(r),$$ $d\mu_{[np]}(\xi)$ vanishes exactly when $h(r)=0$. Thus $d\mu_{[np]}$ is injective, which implies the lemma.
\end{proof}
\begin{lem}
\label{Lemma. Dominance}
    The morphism $\Phi$ is dominant.
\end{lem}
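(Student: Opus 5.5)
To prove that $\Phi$ is dominant, I will show that its image contains a dense subset of $A$. By Lemma \ref{Lemma. Locally quotient}, $A$ is irreducible, and the open subscheme $A^{\mathrm{sm}}=A\times_{\CY_1^{\cy}}\CY$ is dense in $A$. Moreover, $A^{\mathrm{sm}}$ is identified with the open locus of smooth cubic surfaces in $\IP(V)$. So it suffices to show that $\Phi(\mathrm{PGL}_4\times^G U^{\circ})$ contains a nonempty open subset of $A^{\mathrm{sm}}$.

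By Proposition \ref{Proposition. The family of bpCY on U0}(2), over smooth fibers the boundary is the sum of the 27 lines. Hence, restricted to $U^{\circ}\cap U^{\mathrm{sm}}$, the composite $\rho\circ\Phi$ is simply $(g,u)\mapsto g\cdot[x_1x_2x_3-tP_u]$, and on $A^{\mathrm{sm}}$ the map $\rho$ is an isomorphism onto its image. It therefore suffices to show that the morphism
\[
T:\mathrm{PGL}_4\times U^{\circ}\to\IP(V),\qquad (g,u)\mapsto g\cdot[x_1x_2x_3-tP_u],
\]
is dominant.

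\textbf{Dominance of $T$ via the differential.} Since $\IP(V)$ is irreducible of dimension $19$ and the source is irreducible of dimension $15+7=22$, I will prove dominance by showing that $dT$ is surjective at some point; in characteristic $0$ this forces dominance. The natural base point is $(1,0)$, where Lemma \ref{Lemma. Kernel of rho o Phi} shows
\[
\ker dT_{(1,0)}=\fh\oplus T_0\IV(t).
\]
Here $\dim\fh=3+3=6$ (the group $H^{\circ}$ is $(\IG_m)^3$ extended by the translations $n(s_1,s_2,s_3)$) and $\dim T_0\IV(t)=6$. So the rank of $dT_{(1,0)}$ is $22-12=10<19$, and $(1,0)$ does not work.

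Instead I will argue at a general point $u$ with $t\neq 0$, via a dimension count on the generic fiber.

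1. The 6-dimensional family $t\neq0$, $t$ fixed, $(P_i,Q_i)$ varying, modulo the torus $(\IG_m)^3$, should sweep out a 4-dimensional family of isomorphism classes. This matches $\dim\IP(V)/\mathrm{PGL}_4=19-15=4$.
2. Concretely, I will show that the map from $U^{\circ}\cap\{t\neq0\}$ to the GIT moduli space of cubic surfaces is dominant. The source of the argument is Cayley–Salmon/Sylvester type normal forms: a general cubic surface contains a triangle of coplanar lines (a tritangent plane), and, after the centered normalization argument of Lemma \ref{Lemma. Centered normal form}, it can be written as $x_1x_2x_3=tP$ with $P$ of the given shape.
3. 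Equivalently, I can use Lemma \ref{Lemma. Centered normal form} directly. Take a general smooth cubic degenerating to $X_0$ along a DVR, for instance via the test configuration (\ref{Equation. Kps point to 3planes}) perturbed by a general cubic, say $x_1x_2x_3=t(x_0^3+\epsilon G)$ with general $G$. By that lemma, the general fiber lies in the $\mathrm{PGL}_4$-orbit of a fiber over $U^{\circ}$.

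In the last step, care is needed: the family obtained this way must land in $\CY_1^{\cy}$ with the correct limit boundary $D_0$.

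\textbf{Main obstacle.} The key difficulty is making the general-point argument rigorous. One has to show that a general smooth cubic is projectively equivalent to some $x_1x_2x_3=t(x_0^3+\sum_i(P_ix_0x_i^2+Q_ix_i^3))$ with $u\in U^{\circ}$, and in particular close enough to $\IV(t)$ to survive the $G$-invariant shrinkings. The shrinkings are not an issue: since $U^{\circ}$ is a $G$-invariant open set containing $\IV(t)$ and the scaling $(\lambda,\lambda,\lambda)$ acts on $t$ by $\lambda^3$, every point with small $t$ lies in the torus orbit of a point in $U^{\circ}$. So it reduces to producing such normal forms for a Zariski-dense set of cubics.

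I would first try to obtain this from the generic smoothness of $T$ by computing $dT$ at a point $(1,u)$ with $t\neq0$. The image contains:
\begin{itemize}
\item $x_2x_3l_1+x_1x_3l_2+x_1x_2l_3$ and $l_0\,\partial_{x_0}(tP)$, coming from $\mathfrak{pgl}_4$ (with the remaining terms $l_i\,\partial_{x_i}(tP)$ for $i\geq1$);
\item $x_0^3+\dots$, coming from $\partial_t$;
\item $tx_0x_i^2$ and $tx_i^3$, coming from the $P_i$ and $Q_i$.
\end{itemize}
The plan is to check that these span $V/\Ik F_u$ for generic $(P_i,Q_i)$, reducing to a finite linear-algebra check, which is the step I expect to be laborious.
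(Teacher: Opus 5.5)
\textbf{Gap: the only substantive step is never carried out.} Your reduction is fine and close to the paper's. Since $\rho$ only forgets the boundary, $\rho\circ\Phi$ is induced by $T$. You then pass from dominance of $T$ to dominance of $\Phi$ using density of $A^{\mathrm{sm}}$ and the fact that $\rho|_{A^{\mathrm{sm}}}$ is an isomorphism onto the smooth locus. The paper instead uses $19=\dim A\geq\dim\overline{\Image(\Phi)}\geq\dim\overline{\Image(\rho\circ\Phi)}=19$ together with irreducibility of $A$; both work. Your strategy for $T$, namely surjectivity of $dT$ at a single point with $t\neq 0$, is also exactly the paper's. But the proposal never establishes that surjectivity. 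It is deferred as a ``laborious'' check at generic $(P_i,Q_i)$, and steps 1--3 are heuristics rather than arguments. Step 3 would also fail as stated. For fixed $\epsilon\neq0$ and general $G$, the restriction of $x_0^3+\epsilon G$ to each double line $\D_i$ has three distinct roots. The limiting lines therefore join distinct points on the $\D_i$, so the special fiber is not $(X_0,\frac19D_0)$, and Lemma \ref{Lemma. Centered normal form} does not apply.

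\textbf{The missing idea: evaluate at a special point, not a generic one.} The paper takes $u_*$ with $t=1$, $P_i=Q_i=0$, i.e.\ $F_*=x_1x_2x_3-x_0^3$.
\begin{itemize}
\item The $\mathfrak{pgl}_4$-directions give $\sum_i l_i\,\partial F_*/\partial x_i$, which is the degree-$3$ part of the ideal $(x_1x_2,x_2x_3,x_1x_3,x_0^2)$.
\item The only cubic monomials outside this ideal are $x_i^3$ and $x_0x_i^2$ for $i=1,2,3$.
\item These are exactly $-\partial F/\partial Q_i$ and $-\partial F/\partial P_i$ at $u_*$.
\end{itemize}
So surjectivity of $dT$ at $(1,u_*)$ is a short monomial count.

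\textbf{Handling the shrinking of $U$.} You do not need $u_*\in U^{\circ}$. The map $T$ is defined on all of $\mathrm{PGL}_4\times U$. It is smooth at $(1,u_*)$, hence dominant. Its restriction to the dense open $\mathrm{PGL}_4\times U^{\circ}$ of the irreducible source is therefore still dominant. Your torus-scaling argument for this point is incorrect. The element $(\lambda,\lambda,\lambda)$ sends $t\mapsto\lambda^3t$ but also $P_i\mapsto\lambda^{-2}P_i$ and $Q_i\mapsto\lambda^{-3}Q_i$. Making $t$ small thus inflates $(P_i,Q_i)$ and does not move a point toward $\IV(t)$.
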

\begin{proof}
     Consider
    \[
        \Tilde{\rho}:\mathrm{PGL}_4\times U\to\IP(V),\quad(g,u)\mapsto g\cdot[F_u],
    \]
    where $F=x_1x_2x_3-tP$. Take $u_*\in U$ corresponding to the cubic equation $F_*=x_1x_2x_3-x_0^3$. Consider the differential map at $(1,u_*)$
  \[
      d\tilde{\rho}_{(1,u_*)}:\mathfrak{pgl}_4\oplus T_{u_*}U\to T_{[F_*]}\IP(V)\cong V/\Ik F_*.
  \]
    For $\xi\in\mathfrak{pgl}_4$, $d\tilde{\rho}_{(1,u_*)}(\xi,0)$ is of the form $\sum_{i=0}^3 l_i\frac{\partial F_*}{\partial x_i}$, where $l_i\in \Ik[x_0,x_1,x_2,x_3]_1$. Thus 
    \begin{equation}
    \label{Equation. diff pgl4 part}
         d\tilde{\rho}_{(1,u_*)}(\mathfrak{pgl}_4\oplus\{0\})=\frac{(x_1x_2,x_2x_3,x_1x_3,x_0^2)\cap\Ik[x_0,x_1,x_2,x_3]_3}{\Ik F_*}. 
    \end{equation}
   By computing partial derivatives of $x_1x_2x_3-tP$ along $P_i$ and $Q_i$, we have
   \begin{equation}
       \label{Equation. diff T}
       d\tilde{\rho}_{(1,u_*)}(\{0\}\oplus T_{u_*}U)\supset \frac{\la x_1^3,x_1^2x_0,x_2^3,x_2^2x_0,x_3^3,x_3^2x_0\ra_{\Ik}+\Ik F_*}{\Ik F_*}.
   \end{equation}
   Moreover, the right-hand sides of (\ref{Equation. diff pgl4 part}) and (\ref{Equation. diff T}) generate $V/\Ik F_*$, thus $d\tilde{\rho}_{(1,u_*)}$ is surjective. Since $\mathrm{PGL}_4\times U$ and $\IP(V)$ are both smooth, $\tilde{\rho}$ is smooth at $(1,u_*)$. In particular, its image contains an open subset of $\IP(V)$. Hence $\tilde{\rho}$ is dominant, implying that $\rho\circ\Phi$ is dominant. Thus
   \[
       19=\dim A\geq\dim\overline{\Image(\Phi)}\geq\dim\overline{\Image(\rho\circ\Phi)}=\dim\IP(V)=19,
   \]
   where the first equality follows from the proof of Lemma \ref{Lemma. Locally quotient}. Since $A$ is irreducible, $\Phi$ is dominant.
\end{proof}
\begin{proof}[Proof of Proposition \ref{Proposition. Etaleness of Phi}]
    It follows from Lemma \ref{Lemma. Locally quotient} that $A$ is irreducible and seminormal, hence integral. Let $\nu:\widetilde{A}\to A$ be the normalization. Fix a point $\widetilde{y}\in\nu^{-1}(y_0)$. Let $R$ be a complete DVR and $\alpha:\Spec R\to\widetilde{A}$ be a morphism mapping the closed point to $\widetilde{y}$ and the generic point into the locus $\nu^{-1}(A^{\mathrm{sm}})$ parametrizing smooth cubic surfaces. Then Lemma \ref{Lemma. Centered normal form} yields a morphism $\beta:\Spec R\to \mathrm{PGL}_4\times^{G}U^{\circ}$ such that 
    \begin{equation}
        \label{Equation. exsit beta}
        \beta(0)=z_0,\quad\Phi\circ\beta=\nu\circ\alpha. 
    \end{equation}
Since $\mathrm{PGL}_4\times^{G}U^{\circ}$ is normal, $\Phi$ lifts to $\widetilde{\Phi}:\mathrm{PGL}_4\times^{G}U^{\circ}\to\widetilde{A}$. The morphisms $\alpha$ and $\widetilde{\Phi}\circ\beta$ agree at the generic point of $\Spec R$ and have the same composition with $\nu$. Since $\nu$ is separated, they agree on  $\Spec R$. Hence
\[
    \widetilde{y}=\alpha(0)=(\widetilde{\Phi}\circ\beta)(0)=\widetilde{\Phi}(z_0).
\]
Therefore $\nu^{-1}(y_0)$ consists of exactly one point. Thus $A$ is geometrically unibranch at $y_0$. By Lemma \ref{Lemma. Dominance} and Lemma \ref{Lemma. Unramified}, $\Phi$ is dominant and unramified at $z_0$. Applying Lemma \ref{Lemma. Etale criterion}, $\Phi$ is \'{e}tale at $z_0$.
\end{proof}
\begin{cor}
    \label{Corollary. Local quotient presentation}
    After replacing $U^{\circ}$ by a smaller $G$-invariant affine neighborhood of $\IV(t)\subset U^{\circ}$, the morphism
    \begin{equation}
    \label{Equation. Local quotient presentation}
      f:  [U^{\circ}/G]\longrightarrow\CY_1^{\cy}
    \end{equation}
    is a local quotient presentation in the sense of Definition \ref{Definition. Local presentation}.
\end{cor}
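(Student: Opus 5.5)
Our plan is to check the four conditions of Definition \ref{Definition. Local presentation} for the morphism $f:[U^{\circ}/G]\to\CY_1^{\cy}$ induced by the $G$-equivariant family of Proposition \ref{Proposition. The family of bpCY on U0}, taking $x=p_0$ and $w=[0]$. Condition (1) is immediate: by Lemma \ref{Lemma. Automorphism group} the stabilizer is $G_{p_0}\cong(\IG_m)^3\rtimes S_3$, a torus extended by a finite group, so it is linearly reductive in characteristic $0$. For condition (4), the point $0\in U^{\circ}$ is fixed by $G$, since $t,P_i,Q_i$ all vanish there, so $G_w=G$. The induced map $G_w\to G_{p_0}$ is the action of $G$ on the fiber $(X_0,\frac19D_0)$ via the chosen coordinates, and this is exactly the isomorphism of Lemma \ref{Lemma. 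Automorphism group}.

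For étaleness, we use the factorization $[U^{\circ}/G]\cong[(\mathrm{PGL}_4\times^{G}U^{\circ})/\mathrm{PGL}_4]\xrightarrow{[\Phi/\mathrm{PGL}_4]}[A/\mathrm{PGL}_4]\cong\CY_1^{\circ}\subset\CY_1^{\cy}$, with $\CY_1^{\circ}$ open by Lemma \ref{Lemma. Locally quotient}. A morphism of quotient stacks by the same group is étale wherever the equivariant map is étale. By Proposition \ref{Proposition. Etaleness of Phi}, $\Phi$ is étale at $z_0$, so the étale locus $W\subset\mathrm{PGL}_4\times^G U^{\circ}$ is a $\mathrm{PGL}_4$-invariant open set containing $z_0$. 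It therefore corresponds to a $G$-invariant open $U'\subset U^{\circ}$ containing $0$.

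The main obstacle is to shrink $U^{\circ}$ to a $G$-invariant \emph{affine} neighborhood of $\IV(t)$, on which $f$ is both étale and affine. The complement $Z=U^{\circ}\setminus U'$ is a closed $G$-invariant set missing $0$. Because the torus acts on $t$ with weight $\lambda_1\lambda_2\lambda_3$ and on $P_i,Q_i$ with negative weights, every point of $\IV(t)$ specializes to $0$ under a suitable one-parameter subgroup. So a $G$-invariant closed set avoiding $0$ avoids all of $\IV(t)$; this is the same orbit-closure argument already used in the earlier shrinkings. Since $G$ is reductive and $U=\IA^7$ is affine, Luna-type separation (\cite{Alper13}) gives a $G$-invariant function $h$ with $h|_{Z\cup(U\setminus U^{\circ})}=0$ and $h(0)\neq0$. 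If $h$ does not vanish on all of $\IV(t)$, we replace $h$ by a suitable invariant combination, or run the same specialization argument on its zero locus. The resulting $U_h$ is $G$-invariant, affine, contains $\IV(t)$, and $f$ is étale on $[U_h/G]$.

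For affineness, note that the source $[U_h/G]$ has affine diagonal-type structure: it is a quotient of an affine scheme by a reductive group. The target $\CY_1^{\cy}$ has affine diagonal by Theorem \ref{Theorem. Existence of good moduli}, and so does its good moduli space $Y_1^{\cy}$. Hence a morphism from $[\Spec B/G]$ is affine once it is affine after composing to a stack with affine diagonal, and this holds because $[\Spec B/G]\to\Spec\Ik$ is cohomologically affine and the diagonal of $\CY_1^{\cy}$ is affine. As an alternative route, we could invoke the Alper–Hall–Rydh local structure theorem to refine further to an affine étale neighborhood and intersect with it, keeping the result $G$-invariant.
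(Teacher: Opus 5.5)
\paragraph{Verdict.} Most of your argument matches the paper, but the affineness step has a gap.

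\paragraph{What matches the paper.} Your checks of the stabilizer conditions, the étaleness argument via the factorization through $\Phi$, and the shrinking to a $G$-invariant affine $U_h\ni 0$ all follow the paper's proof. So does the one-parameter-subgroup argument giving $\IV(t)\subset U_h$. Your hedge about $h$ possibly vanishing somewhere on $\IV(t)$ is unnecessary. Since $h$ is $G$-invariant and $0$ lies in the orbit closure of every point of $\IV(t)$, you have $h\equiv h(0)\neq 0$ on $\IV(t)$.

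\paragraph{The gap.} The principle you invoke is that $[\Spec B/G]$ is cohomologically affine and $\CY_1^{\cy}$ has affine diagonal. This only shows that $f$ is \emph{cohomologically} affine, not affine. For instance, $B\IG_m\to\Spec\Ik$ satisfies the same hypotheses and is not even representable. The missing ingredient is representability of $f$. Here it holds: the base change of $[U_h/G]\to\CY_1^{\circ}$ along the atlas $A\to[A/\mathrm{PGL}_4]\cong\CY_1^{\circ}$ is the restriction of the morphism of schemes $\Phi$. For representable, quasi-compact, quasi-separated morphisms, cohomological affineness is equivalent to affineness (Serre's criterion, see \cite{Alper13}). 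Your ``alternative route'' through the local structure theorem does not fill the gap. Intersecting with some other affine étale neighborhood does not produce a $G$-invariant affine open of $U^{\circ}$ on which $f$ is affine.

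\paragraph{A simpler repair.} Once you use the factorization through $\Phi$ explicitly, affineness is automatic. You then do not need the result the paper cites here, \cite[Proposition 3.2]{alper2020luna}, which it uses to get a further $G$-invariant affine shrinking on which $f$ is affine. The argument runs as follows.
\begin{itemize}
\item For affine $U_h$, the scheme $\mathrm{PGL}_4\times^{G}U_h$ is affine: it is affine over $\mathrm{PGL}_4/G$, which is affine because $G$ is reductive.
\item $A$ has affine diagonal, because $A\to\CY_1^{\circ}$ is a $\mathrm{PGL}_4$-torsor and $\CY_1^{\circ}\subset\CY_1^{\cy}$ has affine diagonal.
\item A morphism from an affine scheme to a scheme with affine diagonal is affine. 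Hence the restriction of $\Phi$ to $\mathrm{PGL}_4\times^{G}U_h$ is affine.
\item Affineness is fpqc-local on the target, so $f|_{[U_h/G]}$ is affine.
\end{itemize}
With either repair, your proposal proves the corollary.
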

\begin{proof}
    By Proposition \ref{Proposition. Etaleness of Phi}, we may assume the $\mathrm{PGL}_4$-equivariant morphism $\Phi$ to be  \'{e}tale after shrinking $U^{\circ}$ to a $G$-invariant affine neighborhood of 0. Taking quotient stacks by $\mathrm{PGL}_4$, we obtain an \'etale morphism $[U^{\circ}/G]\to[A/\mathrm{PGL}_4]$. Combining this with Lemma \ref{Lemma. Locally quotient}, the morphism
     \[
         [U^{\circ}/G]\longrightarrow [A/\mathrm{PGL}_4]\cong \CY_1^{\circ}\xhookrightarrow{\quad}\CY_1^{\cy}
     \]
     is \'etale. Its stabilizer map at the origin is an isomorphism by Lemma \ref{Lemma. Automorphism group}. Since $\CY_1^{\cy}$ has affine diagonal, \cite[Proposition 3.2]{alper2020luna} allows a further $G$-invariant affine shrinking of $U^{\circ}$, on which the morphism $f$ is affine. Therefore $f$ is a local quotient presentation. 

     Since every point of $\IV(t)$ specializes to 0 under the one-parameter subgroup $$\IG_m\to(\IG_m)^3\hookrightarrow G,\quad s\mapsto(s^{-1},s^{-1},s^{-1}),$$ any $G$-invariant open subscheme of $U$ containing 0 also contains $\IV(t)$. This finishes the proof.
\end{proof}
\section{Local VGIT and wall crossing}
\label{Section. sec 5}
\subsection{VGIT chambers and quotients}
Write $G=T\rtimes S_3$, where $T=(\IG_m)^3$. Let $e_1,e_2,e_3\in X^*(T)$ be characters defined by 
 \[
     e_i:T\longrightarrow\IG_m,\quad (\lambda_1,\lambda_2,\lambda_3)\mapsto\lambda_i.
 \]
For the $G$-action on $R:=\Ik[U]=\Ik[t,P_1,Q_1,P_2,Q_2,P_3,Q_3]$, the $T$-weights are
    \[
        \wt(t)=e_1+e_2+e_3,\quad\wt(P_i)=-2e_i,\quad\wt(Q_i)=-3e_i.
    \]
    The $S_3$-action fixes $t$ and permutes the three pairs $(P_i,Q_i)$. Since $e_1+e_2+e_3$ is $S_3$-invariant, it extends to a character $\chi\in X^*(G)$
\[
    \chi:G\longrightarrow\IG_m,\quad ((\lambda_1,\lambda_2,\lambda_3),\sigma)\mapsto \lambda_1\lambda_2\lambda_3.
\]
Define $\theta:=6\chi\in X^*(G)$.
\begin{lem}
\label{Lemma. VGIT chambers}
    The VGIT chambers of $U$ with respect to $\theta$ are the following:
    \[
        U^+_{\theta}=\bigcap_i\{(P_i,Q_i)\neq(0,0)\},\quad U^-_{\theta}=\{t\neq0\}.
    \]
\end{lem}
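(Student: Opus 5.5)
The plan is to compute the semi-invariants $A_n$ of (\ref{Equation. def of An}) directly on monomials, and then to describe the radicals of $I_\theta^\pm$. Because $G=T\rtimes S_3$, a function lies in $A_n$ exactly when it is a $T$-semi-invariant of the appropriate weight and is also $S_3$-invariant, since $\theta$ is trivial on $S_3$. Each $T$-weight space of $R=\Ik[U]$ is spanned by monomials. So I first determine which monomials can occur, and afterwards exhibit explicit $S_3$-invariant elements that cut out the chambers.

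\emph{Weights.} The action $t\mapsto\lambda_1\lambda_2\lambda_3t$ gives $\sigma^*(t)=\chi\, t$. Take a monomial
\[
m=t^a\prod_i P_i^{b_i}Q_i^{c_i}.
\]
Its $e_i$-component of weight is $a-2b_i-3c_i$. The condition $\sigma^*(m)=\theta^{-n}m$ with $\theta=6\chi$ therefore reads
\[
a-2b_i-3c_i=-6n\quad\text{for } i=1,2,3.
\]
\begin{itemize}
\item If $n<0$, this forces $a\geq -6n>0$. Hence every element of every $A_n$ with $n<0$ is divisible by $t$, and $I_\theta^-\subset (t)$.
\item If $n>0$, then $2b_i+3c_i=a+6n>0$ for every $i$. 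Hence each monomial occurring in $A_n$ lies in the ideal $\prod_i(P_i,Q_i)$, and $I_\theta^+\subset\prod_i(P_i,Q_i)$.
\end{itemize}
These inclusions give
\[
\IV(I_\theta^-)\supset \IV(t)\quad\text{and}\quad \IV(I_\theta^+)\supset\bigcup_i\IV(P_i,Q_i).
\]

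\emph{Reverse inclusions.} Here the $S_3$-invariance must be checked, and this is the only point needing care. For the $(-)$-side, $t^6$ is $S_3$-fixed and satisfies $\sigma^*(t^6)=\chi^6t^6=\theta^{-(-1)}t^6$, so $t^6\in A_{-1}$. Thus $\IV(I_\theta^-)\subset\IV(t)$ and $U_\theta^-=\{t\neq0\}$.

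For the $(+)$-side, a single monomial such as $\prod_i P_i^3$ does not suffice, and its $S_3$-symmetrizations may vanish at points where some $P_i=0$. I therefore use
\[
f=\prod_{i=1}^3\bigl(P_i^6+Q_i^4\bigr).
\]
Each factor $P_i^6+Q_i^4$ is homogeneous of $T$-weight $-12e_i$. So $f$ has weight $-12(e_1+e_2+e_3)=-2\theta$, meaning $\sigma^*(f)=\theta^{-2}f$. Also $f$ is visibly $S_3$-invariant, so $f\in A_2\subset I_\theta^+$. Over the algebraically closed field $\Ik$, $f$ vanishes at a point exactly when $(P_i,Q_i)=(0,0)$ for some $i$. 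Hence $\IV(I_\theta^+)\subset\bigcup_i\IV(P_i,Q_i)$.

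Combining the two directions gives $U_\theta^+=\bigcap_i\{(P_i,Q_i)\neq(0,0)\}$ and $U_\theta^-=\{t\neq0\}$. The main obstacle I expect is the sign convention in (\ref{Equation. def of An}), that is, checking that $t$ has negative $n$-index under the given action. If the convention turned out reversed, the roles of the two chambers would swap, so I would verify it carefully against $\sigma^*(t)=\chi\,t$.
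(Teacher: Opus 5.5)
Your weight computation, the sign check, the inclusions $I_\theta^-\subset(t)$ and $I_\theta^+\subset\prod_i(P_i,Q_i)$, and the use of $t^6\in A_{-1}$ all match the paper's argument. Your $f=\prod_i(P_i^6+Q_i^4)$ is indeed an $S_3$-invariant element of $A_2$.

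The gap is the claim that $f$ vanishes only where some $(P_i,Q_i)=(0,0)$. That is true over $\mathbb{R}$ but false over an algebraically closed field. Indeed, $P_i^6+Q_i^4=(P_i^3+\sqrt{-1}\,Q_i^2)(P_i^3-\sqrt{-1}\,Q_i^2)$ vanishes along two cuspidal curves, for example at $(P_i,Q_i)=(\zeta,1)$ with $\zeta^6=-1$. No other single choice of semi-invariant can rescue this step either. A nonzero element of $A_n$ with $n>0$ is nonconstant, so by Krull's principal ideal theorem its zero locus in $U\cong\IA^7$ is a hypersurface. A hypersurface cannot lie inside the codimension-two set $\bigcup_i\IV(P_i,Q_i)$. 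So the inclusion $\IV(I_\theta^+)\subset\bigcup_i\IV(P_i,Q_i)$, which you yourself identified as the one delicate point, is not proved as written.

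The fix, which is what the paper does, is to let the semi-invariant depend on the point. Take $u_0$ with every $(P_i(u_0),Q_i(u_0))\neq(0,0)$. Each equation $P_i(u_0)^3+cQ_i(u_0)^2=0$ excludes at most one value $c\in\Ik$. So you can choose $c$ with $F_c=\prod_i(P_i^3+cQ_i^2)$ nonzero at $u_0$. Since $P_i^3$ and $Q_i^2$ both have weight $-6e_i$, $F_c$ is $S_3$-invariant of weight $-\theta$. Hence $F_c\in A_1\subset I_\theta^+$, which shows $u_0\notin\IV(I_\theta^+)$. Your degree-two version also works if you replace $f$ by the family $\prod_i(P_i^6+cQ_i^4)$, with $c$ chosen depending on $u_0$.
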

\begin{proof}
    Consider the $G$-action on $U$ and define $R_n$ as in (\ref{Equation. def of An}). A monomial $M=t^q\prod_i P_i^{k_i}Q_i^{l_i}\in\Ik[U]$ has weight  
    \begin{equation}
    \label{Equation. weight of monomial}
    \wt(M)=\sum_{i}(q-2k_i-3l_i)e_i.
    \end{equation}
    If $M$ occurs in an element of $R_n$ for some $n<0$, then $\wt(M)=-n\theta$, which implies that $q-2k_i-3l_i=-6n$ for each $i$. In particular, $q>0$. Thus every element of the VGIT ideal $I_{\theta}^-$ vanishes on $\IV(t)$. Note that $t^6\in R_{-1}$, hence $\IV(I_{\theta}^-)=\IV(t)$. Therefore $U_{\theta}^-=U\backslash V(I_{\theta}^-)=\{t\neq0\}$. 
    
    If $M$ occurs in an element of $R_m$ for some $m>0$, then $q-2k_i-3l_i=-6m$ for each $i$. Thus $2k_i+3l_i=q+6m>0$, which implies that  $(k_i,l_i)\neq(0,0)$ for each $i$. Hence each element of the VGIT ideal $I_{\theta}^+$ vanishes on $\bigcup_i\{P_i=Q_i=0\}\subset U$. This proves
    \begin{equation}
    \label{Equation. inclusion of negaive chamber}
        U_{\theta}^+\subset\bigcap_i\{(P_i,Q_i)\neq(0,0)\}.
    \end{equation}
    Conversely, suppose $u_0\in U$ with all three pairs $(P_i(u_0),Q_i(u_0))$  nonzero. Choose $c\in\Ik$ such that $P_i(u_0)^3+cQ_i(u_0)^2\neq0$ for each $i$. Then the polynomial $F_c=\prod_i(P_i^3+cQ_i^2)$ is $S_3$-invariant and has weight $-6(e_1+e_2+e_3)=-\theta$. Thus $F_c\in R_1$ and $F_c(u_0)\neq0$. Therefore the reverse inclusion of (\ref{Equation. inclusion of negaive chamber}) holds.
\end{proof}
Let $\pi:U\to U\git G$ and $\pi_{\pm}:U_{\theta}^{\pm}\to U_{\theta}^{\pm}\git G$ denote the good quotients. The inclusions induce the following commutative diagram:
\[\begin{tikzcd}
	{U_{\theta}^-} & {U} & {U_{\theta}^+} \\
	{U_{\theta}^-\git G} & {U\git G} & {U_{\theta}^+\git G}
	\arrow[hook, from=1-1, to=1-2]
	\arrow["{\pi_-}"', from=1-1, to=2-1]
	\arrow["\pi", from=1-2, to=2-2]
	\arrow[hook', from=1-3, to=1-2]
	\arrow["{\pi_+}", from=1-3, to=2-3]
	\arrow["{p_-}", from=2-1, to=2-2]
	\arrow["{p_+}"', from=2-3, to=2-2]
\end{tikzcd}\]
\begin{lem}
    \label{Lemma. morphism of quotient of VGIT}
    Set $v=\pi(0)$. Then 
    \begin{enumerate}
        \item $p_-$ is an isomorphism;
        \item The reduced structure of $p_+^{-1}(v)$ is isomorphic to $\IP^3$.
    \end{enumerate}
\end{lem}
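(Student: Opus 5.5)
The plan is to compute both sides directly in terms of the graded pieces $R_n$ of $R=\Ik[U]$, using the monomial weight formula (\ref{Equation. weight of monomial}). Throughout, $U\git G=\Spec R^G$ and $U^{\pm}_\theta\git G$ are the usual VGIT quotients. In particular
\[
U^+_\theta\git G=\Proj_{R^G}\bigoplus_{m\geq0}R_m ,
\]
and $p_+$ is the natural projective morphism. Since $G$ is linearly reductive in characteristic $0$, taking invariants is exact.

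For (1), note that $U^-_\theta=\{t\neq0\}$ by Lemma \ref{Lemma. VGIT chambers}, so $p_-$ is $\Spec$ of the map $R^G\to (R_t)^G$. I would show that this map is an isomorphism by computing $T$-invariants first.

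A Laurent monomial $t^q\prod_iP_i^{k_i}Q_i^{l_i}$ with $q\in\IZ$ and $k_i,l_i\geq0$ is $T$-invariant if and only if $q=2k_i+3l_i$ for all $i$. This forces $q\geq0$, so $(R_t)^T=R^T$. Taking $S_3$-invariants then gives $(R_t)^G=R^G$. Hence $p_-$ is an isomorphism.

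For (2), I would first identify the set-theoretic fiber $p_+^{-1}(v)$ with the image of $N\cap U^+_\theta$. Here $N=\pi^{-1}(v)$ is the null cone, i.e. the common zero locus of the invariants of positive degree. The argument has two halves.

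\begin{itemize}
\item Points of $\IV(t)$ lie in $N$. Indeed, a $T$-invariant monomial with $q=0$ has $k_i=l_i=0$ for all $i$, so it is constant.
\item Points of $U^+_\theta\cap\{t\neq0\}$ do not lie in $N$. For suitable $c\in\Ik$, the invariant $t^6\prod_i(P_i^3+cQ_i^2)$ has weight zero and does not vanish at such a point (the same choice of $c$ as in Lemma \ref{Lemma. VGIT chambers}).
\end{itemize}

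Therefore $(p_+^{-1}(v))_{\mathrm{red}}$ is the reduced image of the closed $G$-invariant subscheme $W:=\IV(t)\cap U^+_\theta$. Because good quotients by linearly reductive groups send closed invariant subschemes to closed subschemes, and the induced map is a good quotient, this reduced image is $(W\git G)_{\mathrm{red}}$. Concretely, it is $\Proj$ of $\bigoplus_m (R/tR)_m$, reduced.

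It remains to compute this $\Proj$. We have $R/tR=\Ik[P_1,Q_1,P_2,Q_2,P_3,Q_3]$. The degree-$m$ piece $(R/tR)_m$ consists of $S_3$-invariants of $T$-weight $-6m(e_1+e_2+e_3)$. Thus the $T$-semi-invariant ring is
\[
\bigotimes_i\Ik[P_i,Q_i]_{(6m)},
\]
where $\Ik[P_i,Q_i]$ is graded with $\deg P_i=2$ and $\deg Q_i=3$.

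The corresponding $\Proj$ is $\prod_i\IP(2,3)$ with the diagonal linearization, so $W\git T\cong(\IP(2,3))^3$. Taking reduced structure, each factor satisfies $\IP(2,3)\cong\IP^1$. Dividing further by $S_3$, which permutes the factors, gives
\[
\mathrm{Sym}^3\IP^1\cong\IP^3 .
\]
An alternative description: each factor is a cubic $r^3+P_ir+Q_i$ modulo rescaling.

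The main obstacle is the careful identification of the reduced fiber of $p_+$ with $(W\git G)_{\mathrm{red}}$, and of the $\Proj$ of the Veronese-type ring with $\mathrm{Sym}^3\IP^1$. The intended route is:
\begin{enumerate}
\item use that the degree-$6$ Veronese subring of $\Ik[P_i,Q_i]$ has $\Proj\cong\IP^1$, generated by $P_i^3,Q_i^2$;
\item note that taking $S_3$-invariants commutes with $\Proj$ for a finite group;
\item conclude with $(\IP^1)^3/S_3\cong\IP^3$.
\end{enumerate}
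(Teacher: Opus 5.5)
Your proposal is correct and follows essentially the same route as the paper. Part (1) is the same Laurent-monomial weight argument, and in part (2) you likewise identify $p_+^{-1}(v)$ set-theoretically with the image of $\IV(t)\cap U^+_\theta$ and compute its quotient as $\Proj$ of the semi-invariant ring of $\Ik[P_1,Q_1,P_2,Q_2,P_3,Q_3]$. The only variation is the last step: the paper shows directly that this ring is the polynomial ring $\Ik[c_0,c_1,c_2,c_3]$ (algebraic independence plus a Hilbert-function count), whereas you pass through the torus quotient $(\IP^1)^3$ and use $\mathrm{Sym}^3\IP^1\cong\IP^3$. Both work, and $\IP(2,3)\cong\IP^1$ already as schemes, so no reduction is needed at that point.
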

\begin{proof}
    A monomial $M=t^q\prod_iP_i^{k_i}Q_i^{l_i}$ occurring in a $G$-invariant polynomial satisfies $q-2k_i-3l_i=0$ for each $i$. Since $k_i,l_i\geq 0$, one has $q\geq 0$. Thus $R^{G}=(R[t^{-1}])^{G}$. Hence $p_-$ is an isomorphism. 

    Set $Z:=\IV(t)^+_{\theta}=U^+_{\theta}\cap \IV(t)$. We claim that $Z=\pi^{-1}(v)\cap U_{\theta}^+$ set-theoretically. Indeed, for a non-constant monomial $M=t^q\prod_iP_i^{k_i}Q_i^{l_i}$ occurring in a $G$-invariant polynomial, we have $q>0$. Thus $M$ vanishes on $\IV(t)$. The natural homomorphism $R^{G}\to R/(t)$ factors through $R^{G}\to\Ik,f\mapsto f(0)$. Thus 
    \begin{equation}
    \label{Equation. pi(V(t))}
    \pi(\IV(t))=\pi(0)=v,
    \end{equation}
 which implies $Z\subset \pi^{-1}(v)\cap U_{\theta}^+$. For the reverse inclusion, suppose $u\in U_{\theta}^+$ and $t(u)\neq 0$. Consider
 \begin{equation}
 \label{Equation. alg indep poly}
     P_u(X):=\prod_i(P_i(u)^3X+Q_i(u)^2)=c_0(u)X^3+c_1(u)X^2+c_2(u)X+c_3(u).
 \end{equation}
Because $u\in U_{\theta}^+$, $P_u(X)$ is nonzero. Thus $c_j(u)\neq 0$ for some $j$, which implies that $(t^6c_j)(u)\neq 0$. Since $t^6c_j$ is $G$-invariant and vanishes at the origin, we have $\pi(u)\neq v$. This proves the reverse inclusion. 

Since $\pi|_{U_{\theta}^+}=p_+\circ\pi_+$, we have $Z=\pi_+^{-1}(p_+^{-1}(v))$. Thus $p_+^{-1}(v)=\pi_+(Z)$ set-theoretically by the surjectivity of $\pi_+$. Note that the closed immersion $Z\git G\hookrightarrow U_{\theta}^+\git G$ has underlying image precisely $\pi_+(Z)$. Hence $Z\git G=p_+^{-1}(v)$ set-theoretically.

Set $\oR:=R/(t)=\Ik[\IV(t)]=\Ik[P_1,Q_1,P_2,Q_2,P_3,Q_3]$. As $\IV(t)$ is preserved by the $G$-action, we can consider the $G$-action on $\IV(t)$ and define $\oR_n$ as in (\ref{Equation. def of An}). A monomial $\prod_iP_i^{k_i}Q_i^{l_i}$ occurs in an element of $\oR_n$ precisely when $2k_i+3l_i=6n$ for each $i$. Hence $\oR_n=(W_n^{\otimes 3})^{S_3}=\mathrm{Sym}^3(W_n)$, where
\[
    W_n=\la P^{3n},P^{3(n-1)}Q^2,\cdots,Q^{2n}\ra=\mathrm{Sym}^n\la P^3,Q^2\ra.
\]
Then $\dim \oR_n=\binom{n+3}{3}$. Consider the morphism induced by (\ref{Equation. alg indep poly}):
\[
    (\IA^2)^3\longrightarrow \IA^4,\quad (P_i,Q_i)_i\mapsto(c_0,c_1,c_2,c_3).
\]
Since $\Ik$ is algebraically closed, the morphism is surjective. Thus $c_0,c_1,c_2,c_3$ are algebraically independent. Moreover, $c_i\in \oR_{1}$ for $0\leq i\leq3$. As the degree $n$ part of the subalgebra generated by $\{c_i\}_{i=0}^3$ has dimension $\binom{n+3}{n}$, we obtain that
\[
    \bigoplus_{n\geq 0}\oR_n=\Ik[c_0,c_1,c_2,c_3],\quad\deg c_i=1.
\]
Taking Proj gives $Z\git G\cong\Proj\Ik[c_0,c_1,c_2,c_3]\cong\IP^3$. 
\end{proof}
\begin{rmk}
    The scheme-theoretic fiber of $p_+$ at $v$ is nonreduced. In fact, it has generic multiplicity 2 along the reduced support. Consider the affine chart of $U_{\theta}^+\git G$ associated to $c_0c_3=\prod_iP_i^3Q_i^2$, and set
    \[
        \Omega=\bigg\{\prod_iP_iQ_i\neq 0\bigg\}\subset U_{\theta}^+,\quad \tau=t\prod_i\frac{Q_i}{P_i},\quad a_j=\frac{c_j}{c_0}\quad(0\leq j\leq3).
    \]
    Then one can verify that the coordinate ring of $\Omega\git G$ is $B:=\Ik[\Omega]^G=\Ik[\tau,a_1,a_2,a_3,a_3^{-1}]$ and the reduced exceptional divisor on this chart is $\IV(\tau)$. Let $\fm_v\subset R^G$ be the maximal ideal of $v$. For every nonconstant monomial $t^q\prod_iP_i^{k_i}Q_i^{l_i}$ occurring in an element of $\fm_v$, (\ref{Equation. weight of monomial}) implies that $q\geq 2$. Thus $\fm_vB\subset (\tau^2)$. Conversely, $\tau^2/a_3=t^2P_1P_2P_3\in\fm_v$ and $a_3$ is a unit in $B$. Hence $\fm_vB=(\tau^2)$. Therefore the scheme-theoretic fiber on this chart is $\Spec(B/(\tau^2))$, which has multiplicity 2 along the reduced support.
\end{rmk}
\subsection{The modular interpretation of chambers}
\begin{lem}
    \label{Lemma. shrinking U A1A2}
    There exists a $G$-invariant open neighborhood $\IV(t)\subset W\subset U$ satisfying the following:
    \begin{enumerate}
        \item For any $u\in W\cap U_{\theta}^-$, $\CX_u$ is GIT-semistable;
        \item For any $u\in W\cap U_{\theta}^-\cap U_{\theta}^+$, $\CX_u$ is GIT-stable.
    \end{enumerate}
\end{lem}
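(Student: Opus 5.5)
Our plan is to analyze the singularities of $\CX_u$ for $u$ near $\IV(t)$ with $t(u)\neq0$, and use the classical GIT classification of cubic surfaces: a cubic surface is GIT-semistable iff it has at worst $A_1$ and $A_2$ singularities (rational double points of type $A_1,A_2$), and GIT-stable iff it has at worst $A_1$ singularities. Since $U^-_\theta=\{t\neq0\}$, it suffices to control the singularities of $\CX_u:\ x_1x_2x_3=tP$ for small $t\neq0$.

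The first step is to locate the possible singular points. For $t\neq 0$ the surface is $x_1x_2x_3=tP$; a singular point must satisfy $\partial_{x_0}$: $t\,\partial_{x_0}P=0$ and $x_jx_k=t\,\partial_{x_i}P$. Near $t=0$, a singular point must specialize to a singular point of $X_0$, i.e.\ to a point on some double curve $\D_i$. Since $P(1,0,0,0)=1$, the point $p=[1:0:0:0]$ (triple point) is not on $\CX_u$ for $t\neq0$, and in fact a neighborhood of $p$ is avoided; this is a $G$-invariant open condition which we impose first. Away from $p$, work in the chart $x_i\neq0$ near a point of $\D_i$, with local coordinates $x_j,x_k$ and $r=x_0/x_i$: the equation becomes $x_jx_k=t\,q_{u,i}(r)+t\cdot(\text{terms in }x_j,x_k)$. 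Since $x_jx_k$ is a nondegenerate quadratic form in two of the variables, by the splitting lemma the singularity is of type $A_{m-1}$ where $m$ is the multiplicity of the root of $t\,q_{u,i}(r)+O(t\,x)$ restricted appropriately, i.e.\ essentially the multiplicity of the corresponding root of $q_{u,i}$ perturbed by $O(t)$. Concretely, after eliminating $x_j,x_k$ via the Morse lemma in those variables, the residual function in $r$ is $t\,q_{u,i}(r)+O(t^2)$, whose root multiplicities near a root $\lambda$ of $q_{u,i}$ are at most $\mult_\lambda(q_{u,i})\le 3$. Hence every singularity is $A_1$, $A_2$, or $A_3$ for small $t$, with $A_3$ only possible near a triple root.

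The key step, and the main obstacle, is to rule out $A_3$ (and more degenerate behavior) uniformly, and to show $A_2$ only occurs off $U^+_\theta$. A triple root of $q_{u,i}(r)=r^3+P_ir+Q_i$ occurs exactly when $P_i=Q_i=0$, i.e.\ $u\notin U^+_\theta$; a double root occurs when $4P_i^3+27Q_i^2=0$ with $(P_i,Q_i)\ne0$. I would handle this by exploiting the $G$-equivariance: the torus rescales $(P_i,Q_i)$ and $t$, so it suffices to verify the statements on a $G$-invariant neighborhood, and for the triple-root locus compute directly: with $P_i=Q_i=0$ the local equation is $x_jx_k=t(r^3+\dots)$, and one checks the residual term in $r$ is $t r^3$ up to units (the only other terms involving only $r$ come from $P$ restricted to $\D_i$, which is exactly $q_{u,i}$), giving $A_2$, not $A_3$. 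More carefully, I would show the residual one-variable function is exactly $t\,q_{u,i}(r)$ times a unit, because setting $x_j=x_k=0$ in $P$ gives $x_i^3q_{u,i}$ and the Morse elimination only changes it by terms divisible by $t^2\cdot(\partial P)^2$ which one checks vanish to order compatible with the root. Then singularity types are $A_{m-1}$ with $m$ the root multiplicity: $A_2$ iff triple root iff some $(P_i,Q_i)=(0,0)$, so on $U^-_\theta$ all are $A_{\le2}$ (semistable) and on $U^-_\theta\cap U^+_\theta$ all are $A_1$ or smooth (stable). Finally, the required neighborhood $W$ is obtained as a $G$-invariant open set containing $\IV(t)$ where the above local analysis (avoiding $p$, singular points lying near $\bigcup\D_i$) is valid; $G$-invariance follows by taking $G$-saturation, and containment of $\IV(t)$ by the torus-contraction argument from Corollary \ref{Corollary. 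Local quotient presentation}.
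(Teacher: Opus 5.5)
Your argument is correct in substance, but it follows a different route from the paper's.

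\textbf{The paper's route.} The paper never writes down local equations in this lemma; it works on the quotient. Since $U^-_\theta\git G\cong U\git G$, all of $\IV(t)$ maps to $v=\pi(0)$. The unique closed orbit over $v$ in $U^-_\theta$ is $\{t\neq0,\ P_i=Q_i=0\}$, the orbit of the $3A_2$ cubic $X_*=(x_1x_2x_3=x_0^3)$. This orbit is semistable and lies in the closure of every orbit over $v$, so the closed $G$-invariant unstable locus has image missing $v$. Inverting an invariant that vanishes on that image but not at $v$ gives (1). For (2), a slice computation at $u_*$ shows that $W^-_\theta\git G\to M^{\mathrm{GIT}}$ is \'etale at $v$. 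The paper then shrinks so that $v$ is the only point over $[X_*]$, and uses the invariants $t^6\prod_i(aP_i^3+bQ_i^2)$ to identify $\pi_-^{-1}(v)=U^-_\theta\cap\bigcup_i\IV(P_i,Q_i)$. The only classical input is that $X_*$ is semistable and every strictly semistable cubic maps to $[X_*]$.

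\textbf{Comparison.} You replace all of this by the explicit normal form $y_jy_k=t\,q_{u,i}(r)$ along each $\D_i$, together with the Hilbert--Mumford description of (semi)stable cubics via $A_1$/$A_2$ singularities. Your route is more elementary and more informative: it also yields the converse, that $A_2$ points occur exactly over $\bigcup_i\IV(P_i,Q_i)$, which the paper proves separately via Bruce--Wall in Proposition~\ref{Proposition. Local VGIT Cartesian}. The paper's route avoids the local bookkeeping your approach needs: tracking where singular points go as $t\to0$, a properness argument for the relative singular locus, and formal-local normal forms over a general $\Ik$. It also fits the Luna-slice framework used afterwards.

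\textbf{Points to tighten.}
\begin{enumerate}
\item The surfaces $\CX_u$ with small $t\neq0$ do come arbitrarily close to $p$, for example at points $[1:s:s:s]$ with $s^3\approx t$. So ``a neighborhood of $p$ is avoided'' is false. What is true, and sufficient, is that $\CX_u$ has no singular points near $p$: for $t\neq0$ a singular point satisfies $\partial_{x_0}P=3x_0^2+\sum_iP_ix_i^2=0$, which fails near $p$.
\item A root of multiplicity at most $3$ gives at most $A_2$, so there is no $A_3$ to rule out.
\item Exactness of the residual is the crux, and it has a one-line reason you should state. The non-$q_{u,i}$ terms of $P$ lie in $(x_j,x_k)^2$, so $x_j=x_k=0$ is exactly the fiberwise critical locus. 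The Hessian $\begin{pmatrix}-2tP_jr&1\\1&-2tP_kr\end{pmatrix}$ is nondegenerate near $t=0$, so the parametric splitting lemma gives $F\sim y_jy_k-t\,q_{u,i}(r)$ with no correction at all.
\item Your first version, a residual $t\,q_{u,i}+O(t^2)$, would not suffice for (2) near points of $\IV(t)\setminus U^+_\theta$. Such a perturbation can have triple roots at points of $U^+_\theta$ arbitrarily close to those points.
\end{enumerate}
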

\begin{proof}
     From Lemma \ref{Lemma. morphism of quotient of VGIT}, $U_{\theta}^-\git G\cong U\git G\cong \Spec R^G$. Now consider $u_*\in U_{\theta}^-$ corresponding to the cubic surface $X_*=\{x_1x_2x_3=x_0^3\}\subset \IP^3$. The orbit 
     $$\CO_*:=G\cdot u_*=\{t\neq0,P_i=Q_i=0\}$$
     is closed in $U_{\theta}^-$. Since every $G$-invariant function takes the same value at $u_*$ and 0, $\pi_-(\CO_*)=v$. Let $(U_{\theta}^-)^{\mathrm{ss}}\subset U_{\theta}^-$ be the open subscheme parametrizing GIT-semistable cubic surfaces. It is $G$-invariant and contains $\CO_*$. Set $Z:=U_{\theta}^-\backslash (U_{\theta}^-)^{\mathrm{ss}}$. Then $Z$ is a closed $G$-invariant subscheme of $U_{\theta}^-$. Since $\pi_-$ is an affine good quotient, $\pi_-(Z)$ is closed in $U_{\theta}^-\git G$. Hence  $v\notin\pi_-(Z)$. Indeed, if $\pi_-(z)=v$ for some $z\in Z$, then $\overline{G\cdot z}\subset Z$. Since $\CO_*$ is a closed orbit in $U_{\theta}^-$, by \cite{Mumford_GIT} we have $\CO_*\subset \overline{G\cdot z}\subset Z$, contradicting the fact that $\CO_*\subset (U_{\theta}^-)^{\mathrm{ss}}$. 
     
     As $\pi_-(Z)$ is closed and does not contain $v$, there exists $\bar{h}\in R^{G}$ such that $\bar{h}|_{\pi_-(Z)}=0$ and $\bar{h}(v)\neq0$. Set 
     $$W=\pi^{-1}(\Spec(R^{G})_{\bar{h}})=\Spec R_{\pi^*\bar{h}}\subset U.$$ 
     From (\ref{Equation. pi(V(t))}), we have $\IV(t)\subset W$. If $u\in W_{\theta}^-:=W\cap U_{\theta}^-$ corresponds to a GIT-unstable cubic surface, then $\pi_-(u)\in \pi_-(Z)$. This is impossible as $\bar{h}|_{\pi_-(Z)}=0$. Hence every point in $W_{\theta}^-$ corresponds to a GIT-semistable cubic surface.

     Let $\gamma:W_{\theta}^-\to\IP(V)^{\mathrm{ss}}$ be the natural morphism defined by the family $\CX\to U$ and $q:\IP(V)^{\mathrm{ss}}\to M^{\mathrm{GIT}}$ be the GIT quotient map, where $V:=H^0(\IP^3,\CO_{\IP^3}(3))$ and $M^{\mathrm{GIT}}$ is the GIT moduli space of cubic surfaces. Since $\gamma$ is $G$-equivariant, the composition $q\circ\gamma$ is $G$-invariant. Thus it factors through $\pi|_{W_{\theta}^-}:W_{\theta}^-\to W_{\theta}^-\git G$. Denote the morphism $W_{\theta}^-\git G\to M^{\mathrm{GIT}}$ by $\overline{\gamma}$. Set $m_*=[X_*]\in M^{\mathrm{GIT}}$. We claim that after replacing $W$ by a $G$-invariant open subscheme, $\overline{\gamma}^{-1}(m_*)=v$. 
     
     Set $H_*=\Aut_{\IP^3}(X_*)\cong\{(\lambda_i)\in(\IG_m)^3|\lambda_1\lambda_2\lambda_3=1\}\rtimes S_3$ and  $S=\IV(t-1)\cap W$. Consider the natural morphism
     \[
         \mathrm{PGL}_4\times^{H_*}S\longrightarrow \IP(V)^{\mathrm{ss}}.
     \]
      By a direct computation, its differential map at $(1,u_*)$ is an isomorphism. Thus $$[S/H_*]\to[\IP(V)^{\mathrm{ss}}/\mathrm{PGL}_4]$$ is strongly \'{e}tale at $u_*.$ The natural isomorphism $W_{\theta}^-\cong G\times^{H_*}S$ induces the equivalence $[W_{\theta}^-/G]\cong[S/H_*]$, hence $[W_{\theta}^-/G]\to[\IP(V)^{\mathrm{ss}}/\mathrm{PGL}_4]$ is strongly \'{e}tale at $u_*$. By \cite[Proposition 2.7]{AFS17}, $\overline{\gamma}:W_{\theta}^-\git G\to M^{\mathrm{GIT}}$ is \'{e}tale at $v$. In particular, $\overline{\gamma}$ is quasi-finite at $v$, and thus we can shrink $W$ to a $G$-invariant open neighborhood of $\IV(t)$ such that $\overline{\gamma}^{-1}(m_*)=v$. Hence the claim is proved. 

      Now we prove that $\pi_-^{-1}(v)=U_{\theta}^-\cap\bigcup_i\IV(P_i,Q_i)$. Indeed, it follows from (\ref{Equation. weight of monomial}) that a nonconstant monomial occurring in a $G$-invariant polynomial in $R[t^{-1}]$ vanishes on $\bigcup_i\IV(P_i,Q_i)$. Thus, as in the proof of (\ref{Equation. pi(V(t))}), every invariant function takes the same value on this locus as at the origin, which gives $$\pi_-^{-1}(v)\supseteq U_{\theta}^-\cap\bigcup_i\IV(P_i,Q_i).$$ If $u\in U_{\theta}^-\backslash\bigcup_i\IV(P_i,Q_i)$, then $(P_i(u),Q_i(u))\neq(0,0)$ for each $i$. So there exist $a,b\in\Ik$ such that the $G$-invariant polynomial $F_{a,b}=t^6\prod_i(aP_i^3+bQ_i^2)$ is nonzero at $u$, implying the reverse inclusion. As the image of every strictly semistable cubic surface under $q$ is $m_*$, the locus in $W_{\theta}^-$ parametrizing strictly semistable cubic surfaces is
      \[
          (\gamma)^{-1}(q^{-1}(m_*))=(\pi|_{W_{\theta}^-})^{-1}(\overline{\gamma}^{-1}(m_*))=(\pi|_{W_{\theta}^-})^{-1}(v)=W_{\theta}^-\cap\bigcup_i\IV(P_i,Q_i).
      \]
     Since $\pi(\IV(t))=\pi(u_*)$, we have $\IV(t)\subset W$. Hence the $G$-invariant open neighborhood $\IV(t)\subset W\subset U$ satisfying the requirement.
\end{proof}

\begin{prop}
    \label{Proposition. Local VGIT Cartesian}
    There exists a $G$-invariant affine open neighborhood $\IV(t)\subset W\subset U^{\circ}$, such that $W^{\pm}_{\theta}:=W\cap U^{\pm}_{\theta}$ fit into the 
    following Cartesian diagram 
    \begin{equation}
    \label{Equation. VGIT diagram}
    \begin{tikzcd}
	{[W^-_{\theta}/G]} & {[W/G]} & {[W^+_{\theta}/G]} \\
	{\CY^{\mathrm{K}}} & {\CY_1^{\cy}} & {\CY^{\ksba}}
	\arrow[from=1-1, to=1-2]
	\arrow[from=1-1, to=2-1]
	\arrow[from=1-2, to=2-2]
	\arrow[from=1-3, to=1-2]
	\arrow[from=1-3, to=2-3]
	\arrow[hook, from=2-1, to=2-2]
	\arrow[hook', from=2-3, to=2-2]
\end{tikzcd}
\end{equation}
\end{prop}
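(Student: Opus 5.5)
We take $W$ to be the intersection of the affine neighborhood $U^{\circ}$ of Corollary \ref{Corollary. Local quotient presentation} with the neighborhood of Lemma \ref{Lemma. shrinking U A1A2}, further shrunk $G$-invariantly. Since the left and right squares of (\ref{Equation. VGIT diagram}) involve open immersions, being Cartesian means $f^{-1}(\CY^{\mathrm{K}})=[W^-_{\theta}/G]$ and $f^{-1}(\CY^{\ksba})=[W^+_{\theta}/G]$ as open substacks of $[W/G]$. Both sides are open and $G$-invariant, so it suffices to compare closed points $u\in W$: we must show that $(\CX_u,\frac19\CD_u)$ lies in $\CY^{\mathrm{K}}$ if and only if $t(u)\neq0$, and lies in $\CY^{\ksba}$ if and only if $(P_i(u),Q_i(u))\neq(0,0)$ for all $i$ (Lemma \ref{Lemma. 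VGIT chambers}). To keep $W$ affine, we shrink by pulling back a principal open of $W\git G$ containing $v$; this contains $\IV(t)$ by (\ref{Equation. pi(V(t))}).

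\emph{K side.} If $t(u)=0$, the fiber is $X_0$, which is non-normal, so it is not in $\CY^{\mathrm{K}}$ by \cite{Odaka13}. If $t(u)\neq0$, then $\CX_u$ is a GIT-semistable cubic surface by Lemma \ref{Lemma. shrinking U A1A2}(1), and $\CD_u$ is the flat limit of the 27 lines. By \cite{Zhao24} (no walls for $c<1/9$, with K-moduli equal to GIT), the pair $(\CX_u,\frac{1-\vep}{9}\CD_u)$ is K-semistable. One point needs care: the boundary $\CD_u$ must agree with the boundary Zhao's family assigns to $\CX_u$. We handle this by noting that for normal $\CX_u$ the CY boundary in $|-9K|$ extending the line divisor is unique, since the K-moduli stack is separated over GIT-semistable points with fixed surface. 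Alternatively, one can invoke \cite[Corollary 3.8]{Zhao24}, which says that $\CY^{\mathrm{K}}$ consists exactly of GIT-semistable $X$ with the limit of the line divisor.

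\emph{KSBA side.} If $(P_i,Q_i)=(0,0)$ for some $i$, there are two cases. If $t=0$, the coefficient of $D$ is at least $1$ along a line: the cubic $q_{u,i}=x_0^3$ gives a triple point on $\D_i$, so $J_{ij}$ contains a line with multiplicity $\geq3$ and hence coefficient $\geq\frac{1+\vep}{3}\cdot 3>1$. So the pair is not KSBA-stable. If $t\neq0$, the surface is strictly semistable by the computation in Lemma \ref{Lemma. shrinking U A1A2} (it has $A_2$ singularities), and by \cite[Theorem 1.5]{GKS21} such pairs are not KSBA-stable; the multiple lines through the $A_2$ point give non-lc points for coefficient $>1/9$. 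Conversely, suppose all $(P_i,Q_i)\neq(0,0)$. If $t\neq0$, $\CX_u$ is GIT-stable, so it has at worst $A_1$ singularities by Lemma \ref{Lemma. shrinking U A1A2}(2), and the pair with $(1+\vep)/9$ is KSBA-stable by \cite[Proposition 5.8]{GKS21} together with ampleness. If $t=0$, each $Z_i(u)$ has multiplicities at most $2$. Then the lines in $J_{ij}$ have multiplicity at most $4$, and we check slc-ness of $(X_0,\frac{1+\vep}{9}\CD_u)$ directly on the normalization $\IP^2$ with conductor the triangle. The constraint is that near the triangle the different plus the boundary coefficients must stay $\le1$; since the boundary lines avoid the vertices and meet each edge at points of multiplicity $\le 2\cdot 3=6<9$, the lc condition holds for small $\vep$. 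Ampleness of $K+\frac{1+\vep}9D=\frac{\vep}{9}D$ is automatic.

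The main obstacle is the $t=0$, KSBA-stable direction. Here we must verify that the lc threshold at the points of $\D_i$ where boundary lines meet the double curve (which requires an slc computation of the form coefficient plus conductor), together with the triple intersections of lines inside a plane, stays below the required bound whenever no $Z_i$ is a triple point. We expect to reduce this to local toric-type computations near points of the conductor. In all cases we use that the question is $G$-invariant and open, so pointwise verification suffices.
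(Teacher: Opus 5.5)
Your strategy matches the paper's proof. You reduce to closed points of $W$, since both sides are $G$-invariant opens. On the K side you use non-normality of $X_0$ at $t=0$, and Lemma \ref{Lemma. shrinking U A1A2} with \cite[Corollary 3.8]{Zhao24} at $t\neq0$. On the KSBA side at $t\neq0$ you use the $A_1$/$A_2$ dichotomy and \cite{GKS21}. The one genuine difference is at $t=0$. There the paper simply cites \cite[Theorem 1.5]{GKS21}, while you check slc-ness of $(X_0,\frac{1+\vep}{9}\CD_u)$ on the normalization by hand. That is a good self-contained replacement, and your numerics in the stable direction are right. However, the unstable direction at $t=0$ is argued incorrectly.

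Suppose $t(u)=0$ and $(P_i(u),Q_i(u))=(0,0)$. Then $Z_i(u)=3p_i$, and the line $L_{ij}(0,\mu)$ occurs in $J_{ij}(u)$ with multiplicity $3\mult_\mu Z_j(u)$. Its coefficient is therefore $\frac{(1+\vep)\mult_\mu Z_j(u)}{3}$, which is $<1$ unless $Z_j(u)$ is also a triple point; for reduced $Z_j(u)$ it is $\frac{1+\vep}{3}$. So in general no component has coefficient $>1$, and the reason you give fails. The real obstruction is at the point $p_i\in\D_i$, through which all nine lines of $J_{ij}(u)$ pass. On $H_k$ the pair $\D_i+\D_j+\frac{1+\vep}{9}J_{ij}(u)$ has multiplicity $2+\vep$ at $p_i$, so blowing up $p_i$ gives discrepancy $-1-\vep<-1$. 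Equivalently, the boundary meets the conductor $\D_i$ at $p_i$ with local intersection number $1+\vep>1$. This is the same test you apply in the converse direction ($3\mult_\lambda Z_i(u)\le 6<9$), so the fix is to use it in both directions.

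Some smaller corrections:
\begin{enumerate}
\item On each plane $H_k$ the conductor is $\D_i+\D_j$, not the whole coordinate triangle. Its only node is $[1:0:0:0]$, which no boundary line meets because $[1:0]\notin Z_i(u)$.
\item The interior points you leave open are harmless. Lines of $J_{ij}(u)$ through a point off $\D_i\cup\D_j$ have pairwise distinct $\lambda$'s and $\mu$'s, so the boundary has multiplicity at most $9\cdot\frac{1+\vep}{9}=1+\vep<2$ there.
\item With these points settled, at $t=0$ the pair is KSBA-stable if and only if no $Z_i(u)$ is a triple root, that is, if and only if $u\in W^+_\theta$. This is exactly the content of the citation the paper uses.
\item On the K side, drop the appeal to separatedness of the K-moduli stack: it is not separated, and that is not a valid reason. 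What is needed is \cite[Corollary 3.8]{Zhao24}, together with the fact that $\CD_u$ is by construction the flat limit of the line divisors.
\end{enumerate}
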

\begin{proof}
    Let $\IV(t)\subset W\subset U$ be a $G$-invariant open neighborhood satisfying Lemma \ref{Lemma. shrinking U A1A2} and replace $W$ by $W\cap U^{\circ}$. Set $\iota:[W/G]\to\CY_1^{\cy}$. For $u\in W$, the construction of $W$ gives that  $\iota(u)\in\CY^{\mathrm{K}}$ if and only if $u\in W\cap U_{\theta}^-$. So the left square in (\ref{Equation. VGIT diagram}) is Cartesian. 
    
    For $u\in \IV(t)$, \cite[Theorem 1.5]{GKS21} implies that $\iota(u)\in\CY^{\ksba}$ precisely when $u\in W_{\theta}^+$. Now let $u\in W_{\theta}^-$. If $u\in W^+_{\theta}$, then the underlying surface of $\iota(u)$ is GIT-stable, thus has at worst $A_1$-singularities. If $u\notin W^+_{\theta}$, then we may assume $P_1(u)=Q_1(u)=0$. On the affine chart $x_1=1$, near $[0:1:0:0]$ the equation is $x_2x_3=t(x_0^3+P_2x_0x_2^2+Q_2x_2^3+P_3x_0x_3^2+Q_3x_3^3)$. Assign weights $\wt(x_2)=\wt(x_3)=\frac12$ and $\wt(x_0)=\frac13$. Then the equation is semiquasihomogeneous with principal part $x_2x_3-tx_0^3$, which is an $A_2$-singularity since $t\neq0$. Hence the underlying surface of $\iota(u)$ has an $A_2$-singularity by \cite[Lemma 1 and Corollary]{BW79}. Thus $\iota(u)\in\CY^{\ksba}$ precisely when $u\in W^+_{\theta}$ by \cite[Theorem 1.5]{GKS21}. Therefore the right square in (\ref{Equation. VGIT diagram}) is Cartesian.
\end{proof}
\subsection{Wall crossing morphisms}
Return to the commutative diagram (\ref{Equation. wall crossing diagram})
\[
    \begin{tikzcd}
	{\CY^{\mathrm{K}}} & {\CY_1^{\cy}} & {\CY^{\ksba}} \\
	{Y^{\mathrm{K}}} & {Y_1^{\cy}} & {Y^{\ksba}}
	\arrow[hook, from=1-1, to=1-2]
	\arrow["{\phi_{\mathrm{K}}}", from=1-1, to=2-1]
	\arrow["{\phi_{\mathrm{\cy}}}", from=1-2, to=2-2]
	\arrow[hook', from=1-3, to=1-2]
	\arrow["{\phi_{\mathrm{\ksba}}}", from=1-3, to=2-3]
	\arrow["{\pi_{\mathrm{K}}}", from=2-1, to=2-2]
	\arrow["{\pi_{\ksba}}"', from=2-3, to=2-2]
\end{tikzcd}
\]
Now we can describe the two morphisms $\pi_{\mathrm{K}}$ and $\pi_{\ksba}$.
\begin{cor}
\label{Corollary. describe wall crossing morphisms}
   With the notation above, the following hold.
 \begin{enumerate}
     \item $\pi_{\mathrm{K}}$ is an isomorphism. In particular, $Y_1^{\cy}$ is isomorphic to the GIT moduli space of cubic surfaces.
     \item $\pi_{\ksba}$ is an isomorphism outside $q_0:=\phi_{\cy}(p_0)$ and $\pi_{\ksba}$ contracts an exceptional divisor $E\cong\IP^3$ to $q_0$.
 \end{enumerate}
\end{cor}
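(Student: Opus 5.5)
The plan is to split $Y_1^{\cy}$ into the locus away from $q_0$, where both morphisms are isomorphisms for elementary reasons, and a neighborhood of $q_0$, where the local VGIT picture of Proposition \ref{Proposition. Local VGIT Cartesian} together with Lemma \ref{Lemma. morphism of quotient of VGIT} gives the description.

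\textbf{Away from $q_0$.} By Proposition \ref{Proposition. Unique closed pt}, every closed point of $\CY_1^{\cy}$ other than $p_0$ lies in $\CY^{\mathrm{K}}\cap\CY^{\ksba}$. It is of the form $(X,\frac19\sum_iL_i)$ with $X$ GIT-stable, so it has finite stabilizer and is closed in both substacks. Set $\CU:=\phi_{\cy}^{-1}(Y_1^{\cy}\setminus\{q_0\})$. This is a saturated open substack, and all of its closed points lie in $\CY^{\mathrm{K}}\cap\CY^{\ksba}$. Every point of $\CU$ specializes to a closed point of $\CU$, and $\CY^{\mathrm{K}},\CY^{\ksba}$ are open, so $\CU\subset\CY^{\mathrm{K}}\cap\CY^{\ksba}$. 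In fact $\CU=\phi_{\mathrm{K}}^{-1}(\pi_{\mathrm{K}}^{-1}(Y_1^{\cy}\setminus q_0))$, and likewise on the KSBA side. Hence $\CU$ is a saturated open substack of each of $\CY^{\mathrm{K}}$, $\CY^{\ksba}$ and $\CY_1^{\cy}$, and uniqueness of good moduli spaces (Alper) shows that $\pi_{\mathrm{K}}$ and $\pi_{\ksba}$ are isomorphisms over $Y_1^{\cy}\setminus\{q_0\}$. The saturation claims use that points over $q_0$ in $\CY^{\mathrm{K}}$ or $\CY^{\ksba}$ are exactly those S-equivalent to $p_0$. I must check that this matches $\phi_{\mathrm{K}}^{-1}(\pi_{\mathrm{K}}^{-1}(q_0))$, which follows from the fibers of good moduli spaces being S-equivalence classes.

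\textbf{Near $q_0$.} I use the local quotient presentation of Corollary \ref{Corollary. Local quotient presentation} restricted to $W$. Applying \cite[Proposition 3.2 / Luna étale]{alper2020luna}, I shrink $W$ to a $G$-saturated affine neighborhood so that $[W/G]\to\CY_1^{\cy}$ is strongly étale. Then $W\git G\to Y_1^{\cy}$ is étale, and $[W/G]\cong[W\git G]\times_{Y_1^{\cy}}\CY_1^{\cy}$. By Proposition \ref{Proposition. Local VGIT Cartesian}, $[W^\pm_\theta/G]$ is the base change of $\CY^{\mathrm{K}}$, respectively $\CY^{\ksba}$. Good moduli spaces commute with flat base change, so $W^-_\theta\git G\to W\git G$ is the base change of $\pi_{\mathrm{K}}$ along the étale map $W\git G\to Y_1^{\cy}$, and similarly for $+$ and $\pi_{\ksba}$.

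Lemma \ref{Lemma. morphism of quotient of VGIT}(1), transported to $W$ since $W$ is saturated, shows that $p_-$ is an isomorphism. Therefore $\pi_{\mathrm{K}}$ is an isomorphism étale-locally near $q_0$, and together with the first step it is an isomorphism. Remark \ref{Remark. Comparing two K/KSBA} then identifies $Y_1^{\cy}$ with the GIT moduli space. On the other side, $p_+^{-1}(v)_{\mathrm{red}}\cong\IP^3$ by Lemma \ref{Lemma. morphism of quotient of VGIT}(2). Since $q_0$ has a unique preimage $v$ near the chosen étale chart, $\pi_{\ksba}^{-1}(q_0)_{\mathrm{red}}\cong\IP^3$. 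A dimension count shows this is a divisor: $\dim Y^{\ksba}=4$, since $\dim U^+_\theta\git G=7-3$.

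\textbf{Main obstacle.} The delicate part is making the étale base-change compatible with fibers. I need the étale map $W\git G\to Y_1^{\cy}$ to have $v$ as the unique point over $q_0$ after shrinking, and the fiber of $\pi_{\ksba}$ must be computed in the base change. Both hold because the stabilizer map at $p_0$ is an isomorphism (Lemma \ref{Lemma. Automorphism group}), and because the fiber over a point is unchanged under étale base change when the point has a single preimage with trivial residue field extension, which is automatic since $\Ik$ is algebraically closed.
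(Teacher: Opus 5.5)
Your proposal is correct and is essentially the paper's argument. Over $Y_1^{\cy}\setminus\{q_0\}$ you use Proposition \ref{Proposition. Unique closed pt} to place $\phi_{\cy}^{-1}(Y_1^{\cy}\setminus\{q_0\})$ inside $\CY^{\mathrm{K}}\cap\CY^{\ksba}$ and conclude by uniqueness of good moduli spaces. Near $q_0$ you make $[W/G]\to\CY_1^{\cy}$ strongly \'etale, combine Proposition \ref{Proposition. Local VGIT Cartesian} with base change of good moduli spaces to identify $W^{-}_{\theta}\git G$ and $W^{+}_{\theta}\git G$ with $W\git G\times_{Y_1^{\cy}}Y^{\mathrm{K}}$ and $W\git G\times_{Y_1^{\cy}}Y^{\ksba}$, and finish with Lemma \ref{Lemma. morphism of quotient of VGIT}.

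Two of your worries are unnecessary. Saturation of $\mathcal{U}$ in $\CY^{\mathrm{K}}$ and $\CY^{\ksba}$ follows formally from commutativity of the diagram. The fiber over $v$ of the base-changed map is $\pi_{\ksba}^{-1}(q_0)$ whether or not $v$ is the only preimage of $q_0$. Your dimension count showing $E$ is a divisor, and your insistence that $W$ be saturated in $U$ so that Lemma \ref{Lemma. morphism of quotient of VGIT} transfers, are small points the paper leaves implicit.
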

\begin{proof}
     Set $V:=Y_1^{\cy}\backslash\{q_0\}$ and $\CV:=\phi_{\cy}^{-1}(V)$. We first claim that
     \[
         \CV\subset\CY^{\mathrm{K}}\cap\CY^{\ksba}.
     \]
     For $x\in \CV$, let $x_0$ be the unique closed point in the fiber of $\phi_{\cy}$ containing $x$. As $\phi_{\cy}(x)\neq q_0$, we have $x_0\neq p_0$. By Proposition \ref{Proposition. Unique closed pt}, $x_0\in\CY^{\mathrm{K}}\cap\CY^{\ksba}$.  Since $\CY^{\mathrm{K}}$ and $\CY^{\ksba}$ are both open substacks of $\CY_1^{\cy}$ and $x_0$ is a specialization of $x$, we have $x\in\CY^{\mathrm{K}}\cap\CY^{\ksba}$, implying the claim. Now the restrictions 
     \[
     \phi_{\cy}|_{\CV}:\CV\longrightarrow V,\quad\phi_{\mathrm{K}}|_{\CV}:\CV\longrightarrow\pi_{\mathrm{K}}^{-1}(V),\quad\phi_{\ksba}|_{\CV}:\CV\longrightarrow\pi_{\ksba}^{-1}(V)
     \]
     are all good moduli space morphisms. By the uniqueness of good moduli spaces, 
     \[
         \pi_{\mathrm{K}}^{-1}(V)\xrightarrow{\sim}V,\quad \pi_{\ksba}^{-1}(V)\xrightarrow{\sim}V.
     \]
    As $f:[W/G]\to\CY_1^{\cy}$ is strongly \'{e}tale at 0, after replacing $W$ by a $G$-invariant open neighborhood of $\IV(t)$, we may assume that $f$ is strongly \'{e}tale by \cite[Proposition 2.7(3)]{AFS17}. Then $W\git G\to Y_1^{\cy}$ is \'{e}tale. Consider the following diagram
\[\begin{tikzcd}
	{[W^-_{\theta}/G]} & {[W/G]} & {W\git G} \\
	{\CY^{\mathrm{K}}} & {\CY_1^{\cy}} & {Y_1^{\cy}}
	\arrow[from=1-1, to=1-2]
	\arrow[from=1-1, to=2-1]
	\arrow[from=1-2, to=1-3]
	\arrow[from=1-2, to=2-2]
	\arrow[from=1-3, to=2-3]
	\arrow[from=2-1, to=2-2]
	\arrow[from=2-2, to=2-3]
\end{tikzcd}\]
The left and right squares are Cartesian by Proposition \ref{Proposition. Local VGIT Cartesian} and \cite[Proposition 2.7(2)]{AFS17}, respectively. Then we obtain 
\begin{equation}
    \label{Equation. Cartesian in cor}
    [W_{\theta}^-/G]\cong W\git G\times_{Y_1^{\cy}}\CY^{\mathrm{K}}.
\end{equation}
    Pulling back the good moduli space morphism $\CY^{\mathrm{K}}\to Y^{\mathrm{K}}$ along $W\git G\times_{Y_1^{\cy}}Y^{\mathrm{K}}\to Y^{\mathrm{K}},$ by \cite[Proposition 4.7]{Alper13}, we obtain a good moduli space morphism
    \begin{equation}
        \label{Equation. cor good moduli mor}
        W\git G\times_{Y_1^{\cy}}\CY^{\mathrm{K}}\longrightarrow W\git G\times_{Y_1^{\cy}}Y^{\mathrm{K}}.
    \end{equation}
    Combining (\ref{Equation. Cartesian in cor}), (\ref{Equation. cor good moduli mor}) and the uniqueness of good moduli spaces, we have 
    \begin{equation}
    \label{Equation. local Cartesian K}
    W_{\theta}^-\git G\cong W\git G\times_{Y_1^{\cy}}Y^{\mathrm{K}}.
    \end{equation}
Hence $\pi_K$ is an isomorphism near $q_0$ by Lemma \ref{Lemma. morphism of quotient of VGIT} (1) and \cite[\href{https://stacks.math.columbia.edu/tag/041Y}{Tag 041Y}]{stacks-project}. Therefore $\pi_{\mathrm{K}}$ is an isomorphism. 

The same proof as (\ref{Equation. local Cartesian K}) shows that
\begin{equation*}
    W_{\theta}^+\git G\cong W\git G\times_{Y_1^{\cy}}Y^{\ksba}.
\end{equation*}
Thus Lemma \ref{Lemma. morphism of quotient of VGIT} (2) implies that $(\pi_{\ksba}^{-1}(q_0))_{\mathrm{red}}\cong\IP^3$. This completes the proof.
\end{proof}
\begin{rmk}
    The pairs parametrized by $E\cong\IP^3$ admit the following description. Given $c=[c_0:c_1:c_2:c_3]\in\IP^3$, allowing constant factors, factor the corresponding polynomial as 
    \[
        c_0X^3+c_1X^2+c_2X+c_3=\prod_i(A_iX+B_i).
    \]
    Choose $P_i,Q_i\in\Ik$ such that $P_i^3=A_i,Q_i^2=B_i$, and write
        $r^3+P_ir+Q_i=\prod_{\alpha=1}^3(r-\lambda_{i,\alpha}).$
    On the surface $X_0=(x_1x_2x_3=0)\subset\IP^3$, consider the line 
    \[
        L_{ij}^{\alpha\beta}:=\IV(x_k,x_0-\lambda_{i,\alpha}x_i-\lambda_{j,\beta}x_j),\quad\{i,j,k\}=\{1,2,3\}.
    \]
    Then $c\in\IP^3$ represents the boundary polarized CY pair $(X_0,\frac19D_c)$, where $D_c=\sum_{1\leq i<j\leq3}\sum_{\alpha,\beta=1}^3L_{ij}^{\alpha\beta}$.

    Up to isomorphism, the pair $(X_0,\frac19D_c)$ is independent of the choice of $P_i$ and $Q_i$. Indeed, any other choices differ by
    \[
        (P_i,Q_i)\longmapsto (\mu_i^2P_i,\mu_i^3Q_i)
    \]
    for some $\mu_i\in\Ik^{\times}$. This rescales the roots $\lambda_{i,\alpha}$ by $\mu_i$, and the resulting change of boundary divisor is induced by the automorphism
    \[
        [x_0:x_1:x_2:x_3]\longmapsto[x_0:\mu_1^{-1}x_1:\mu_2^{-1}x_2:\mu_3^{-1}x_3]
    \]
    of $X_0$.
\end{rmk}

\bibliographystyle{alpha}
\bibliography{ref}

\newcommand{\etalchar}[1]{$^{#1}$}
\begin{thebibliography}{ABB{\etalchar{+}}23}

\bibitem[ABB{\etalchar{+}}23]{ABB23}
Kenneth Ascher, Dori Bejleri, Harold Blum, Kristin DeVleming, Giovanni
  Inchiostro, Yuchen Liu, and Xiaowei Wang.
\newblock Moduli of boundary polarized {Calabi-Yau} pairs, 2023.
\newblock \href{https://arxiv.org/abs/2307.06522}{\textsf{arXiv:2307.06522}}.

\bibitem[ADL24]{ADL24}
Kenneth Ascher, Kristin DeVleming, and Yuchen Liu.
\newblock Wall crossing for {K}-moduli spaces of plane curves.
\newblock {\em Proc. Lond. Math. Soc. (3)}, 128(6):Paper No. e12615, 113, 2024.

\bibitem[AFS17]{AFS17}
Jarod Alper, Maksym Fedorchuk, and David~Ishii Smyth.
\newblock Second flip in the {H}assett-{K}eel program: existence of good moduli
  spaces.
\newblock {\em Compos. Math.}, 153(8):1584--1609, 2017.

\bibitem[AHLH23]{AHLH23}
Jarod Alper, Daniel Halpern-Leistner, and Jochen Heinloth.
\newblock Existence of moduli spaces for algebraic stacks.
\newblock {\em Invent. Math.}, 234(3):949--1038, 2023.

\bibitem[AHR20]{alper2020luna}
Jarod Alper, Jack Hall, and David Rydh.
\newblock A {L}una \'etale slice theorem for algebraic stacks.
\newblock {\em Ann. of Math. (2)}, 191(3):675--738, 2020.

\bibitem[Alp13]{Alper13}
Jarod Alper.
\newblock Good moduli spaces for {A}rtin stacks.
\newblock {\em Ann. Inst. Fourier (Grenoble)}, 63(6):2349--2402, 2013.

\bibitem[AOV08]{AOV08}
Dan Abramovich, Martin Olsson, and Angelo Vistoli.
\newblock Tame stacks in positive characteristic.
\newblock {\em Ann. Inst. Fourier (Grenoble)}, 58(4):1057--1091, 2008.

\bibitem[BL24]{BL24}
Harold Blum and Yuchen Liu.
\newblock Good moduli spaces for boundary polarized {Calabi-Yau} surface pairs,
  2024.
\newblock \href{https://arxiv.org/abs/2407.00850}{\textsf{arXiv:2407.00850}}.

\bibitem[BLXZ25]{BLXZ25}
Harold Blum, Yuchen Liu, Chenyang Xu, and Ziquan Zhuang.
\newblock Relative stability theory and properness of {K}-moduli spaces, 2025.
\newblock \href{https://arxiv.org/abs/2510.06197}{\textsf{arXiv:2510.06197}}.

\bibitem[BW79]{BW79}
J.~W. Bruce and C.~T.~C. Wall.
\newblock On the classification of cubic surfaces.
\newblock {\em J. London Math. Soc. (2)}, 19(2):245--256, 1979.

\bibitem[FSW25]{FSW25}
Hanlong Fang, Luca Schaffler, and Xian Wu.
\newblock Fineness and smoothness of a {KSBA} moduli of marked cubic surfaces.
\newblock {\em Proc. Amer. Math. Soc.}, 153(12):5133--5146, 2025.

\bibitem[Fuj90]{Fujita90}
Takao Fujita.
\newblock On singular del {P}ezzo varieties.
\newblock In {\em Algebraic geometry ({L}'{A}quila, 1988)}, volume 1417 of {\em
  Lecture Notes in Math.}, pages 117--128. Springer, Berlin, 1990.

\bibitem[GKS21]{GKS21}
Patricio Gallardo, Matt Kerr, and Luca Schaffler.
\newblock Geometric interpretation of toroidal compactifications of moduli of
  points in the line and cubic surfaces.
\newblock {\em Adv. Math.}, 381:Paper No. 107632, 48, 2021.

\bibitem[Kol13]{Kollar13}
J\'anos Koll\'ar.
\newblock {\em Singularities of the minimal model program}, volume 200 of {\em
  Cambridge Tracts in Mathematics}.
\newblock Cambridge University Press, Cambridge, 2013.
\newblock With a collaboration of S\'andor Kov\'acs.

\bibitem[Kol16]{Kollar16}
J\'anos Koll\'ar.
\newblock Sources of log canonical centers.
\newblock In {\em Minimal models and extremal rays ({K}yoto, 2011)}, volume~70
  of {\em Adv. Stud. Pure Math.}, pages 29--48. Math. Soc. Japan, [Tokyo],
  2016.

\bibitem[Kol23]{Kollar23}
J\'anos Koll\'ar.
\newblock {\em Families of varieties of general type}, volume 231 of {\em
  Cambridge Tracts in Mathematics}.
\newblock Cambridge University Press, Cambridge, 2023.
\newblock With the collaboration of Klaus Altmann and S\'andor J. Kov\'acs.

\bibitem[LWX21]{LWX21}
Chi Li, Xiaowei Wang, and Chenyang Xu.
\newblock Algebraicity of the metric tangent cones and equivariant
  {K}-stability.
\newblock {\em J. Amer. Math. Soc.}, 34(4):1175--1214, 2021.

\bibitem[LXZ22]{LXZ22}
Yuchen Liu, Chenyang Xu, and Ziquan Zhuang.
\newblock Finite generation for valuations computing stability thresholds and
  applications to {K}-stability.
\newblock {\em Ann. of Math. (2)}, 196(2):507--566, 2022.

\bibitem[MFK94]{Mumford_GIT}
D.~Mumford, J.~Fogarty, and F.~Kirwan.
\newblock {\em Geometric invariant theory}, volume~34 of {\em Ergebnisse der
  Mathematik und ihrer Grenzgebiete (2) [Results in Mathematics and Related
  Areas (2)]}.
\newblock Springer-Verlag, Berlin, third edition, 1994.

\bibitem[Nar82]{Naruki82}
Isao Naruki.
\newblock Cross ratio variety as a moduli space of cubic surfaces.
\newblock {\em Proc. London Math. Soc. (3)}, 45(1):1--30, 1982.
\newblock With an appendix by Eduard Looijenga.

\bibitem[Oda13]{Odaka13}
Yuji Odaka.
\newblock The {GIT} stability of polarized varieties via discrepancy.
\newblock {\em Ann. of Math. (2)}, 177(2):645--661, 2013.

\bibitem[OSS16]{OSS16}
Yuji Odaka, Cristiano Spotti, and Song Sun.
\newblock Compact moduli spaces of del {P}ezzo surfaces and
  {K}\"ahler-{E}instein metrics.
\newblock {\em J. Differential Geom.}, 102(1):127--172, 2016.

\bibitem[Rei94]{Reid94}
Miles Reid.
\newblock Nonnormal del {P}ezzo surfaces.
\newblock {\em Publ. Res. Inst. Math. Sci.}, 30(5):695--727, 1994.

\bibitem[Sch24]{Schock24}
Nolan Schock.
\newblock Moduli of weighted stable marked cubic surfaces, 2024.
\newblock \href{https://arxiv.org/abs/2305.06922}{\textsf{arXiv:2305.06922}}.

\bibitem[{Sta}26]{stacks-project}
The {Stacks project authors}.
\newblock The stacks project.
\newblock \url{https://stacks.math.columbia.edu}, 2026.

\bibitem[Xu25]{Xubook}
Chenyang Xu.
\newblock {\em K-stability of {F}ano varieties}, volume~50 of {\em New
  Mathematical Monographs}.
\newblock Cambridge University Press, Cambridge, 2025.

\bibitem[Zha24]{Zhao24}
Junyan Zhao.
\newblock Compactifications of moduli of del {P}ezzo surfaces via line
  arrangement and {K}-stability.
\newblock {\em Canad. J. Math.}, 76(6):2115--2135, 2024.

\end{thebibliography}

\end{document}